\documentclass[11pt,a4paper]{article}
\usepackage[margin=28mm,headsep=7mm]{geometry}
\usepackage[utf8]{inputenc} 
\usepackage[T1]{fontenc}    
\usepackage{lmodern}
\usepackage{booktabs}       
\usepackage{amsfonts}       
\usepackage{mathrsfs}         
\usepackage{amsmath,amssymb,bbm}
\usepackage{amsthm}
\usepackage{microtype}      
\usepackage{tikz-cd}
\usepackage{graphicx,adjustbox}
\usepackage{hyperref}
\hypersetup{hidelinks,pdfauthor={Keyao Peng},
  pdftitle={Defect TQFTs, the Generalized Cobordism Hypothesis, and Algebraic K-Theory}}
\newtheorem{theorem}{Theorem}[section]

\newtheorem{proposition}[theorem]{Proposition}
\newtheorem{lemma}[theorem]{Lemma}

\newtheorem{conjecture}[theorem]{Conjecture}
\newtheorem{definition}[theorem]{Definition}

\newtheorem{example}[theorem]{Example}
\newtheorem{remark}[theorem]{Remark}

\newcommand{\cA}{\mathcal{A}}
\newcommand{\cB}{\mathcal{B}}
\newcommand{\cC}{\mathcal{C}}
\newcommand{\cD}{\mathcal{D}}
\newcommand{\cE}{\mathcal{E}}
\newcommand{\bZ}{\mathbb{Z}}
\newcommand{\bR}{\mathbb{R}}
\newcommand{\bO}{\mathbb{O}}

\newcommand{\one}{\mathbf{1}}

\newcommand{\GL}{\mathrm{GL}}

\newcommand{\St}{\mathrm{St}}

\newcommand{\rFun}{\mathrm{Fun}}
\newcommand{\rMap}{\mathrm{Map}}
\newcommand{\End}{\mathrm{End}}
\newcommand{\Obj}{\mathrm{Obj}}
\newcommand{\Mor}{\mathrm{Mor}}
\newcommand{\colim}{\operatorname*{colim}}
\newcommand{\hofib}{\operatorname{hofib}}
\newcommand{\Ker}{\operatorname{Ker}}
\newcommand{\Spc}{\mathcal{S}pc}

\newcommand{\Bord}{\mathrm{Bord}}
\newcommand{\Lasa}{\mathrm{Lasa}}
\newcommand{\Loc}{\mathrm{Loc}}
\newcommand{\Cat}{\mathrm{Cat}}
\newcommand{\SMCat}{\mathrm{SMCat}}
\newcommand{\Vect}{\mathrm{Vect}}
\newcommand{\fd}{\mathrm{fd}}
\newcommand{\fr}{\mathrm{fr}}
\newcommand{\op}{\mathrm{op}}

\newcommand{\xr}[1]{\xrightarrow{#1}}

\title{Defect TQFTs, the Generalized Cobordism Hypothesis,
and Algebraic $K$-Theory}
\author{Keyao Peng}
\date{}
\begin{document}
\maketitle

\begin{abstract}
By constructing a fully dualizable category $\mathrm{Bord}^{\mathcal{C}}_n$ as the left adjoint of the forgetful functor, we state a generalized cobordism hypothesis. As a corollary, we establish a generalized Pontryagin–Thom isomorphism that gives a geometric interpretation of higher algebraic $K$-theory using cobordism classes of framed manifolds and stratified spaces decorated by categorical string diagrams, which we call lasagnas. For finite-dimensional vector spaces over a field, these decorations are branched local systems.

We also examine low-dimensional representatives, including $K_1$, Steinberg-group presentations of $K_2$, and fillings related to the Steinberg relation, providing a more explicit construction of Ghys' manifold.
\end{abstract}

\tableofcontents

\section{Introduction}\label{sec:intro}

What is an element of higher algebraic $K$-theory? This is the
question that motivated the present work. An element of $K_0$ has a familiar
interpretation in terms of objects and the relations defining their
Grothendieck group. For the higher groups, it is natural to look for
geometric witnesses of those relations, then witnesses relating those
witnesses. The path-based viewpoint of homotopy type theory helped suggest
this approach \cite{HoTTBook}. The discovering of elementary picture
\(0=1-1=-1+1=0\) for Hopf fibration \cite{0-eq-0}, together with the relation between framed cobordism,
the sphere spectrum, and the $K$-theory of finite sets, led me to look for a
similar description for symmetric monoidal categories
\cite{GGN,WeibelKbook}.

The aim is to represent such witnesses by manifolds and stratified spaces
carrying string diagrams of coefficient objects and morphisms. A composition
of morphisms is represented by a local pasting; duals and adjoints appear
through framed bends, cups, and caps. This does not make the computation of
$K$-theory easy. It translates the problem into the construction and
classification of geometric objects with specified local structure. No new
computation of the algebraic $K$-groups considered here is claimed.

\paragraph{Defect bordisms and the generalized cobordism hypothesis.}
For an $(\infty,n)$-category $\cC$, the proposed category $\Bord_n^{\cC}$
consists of framed bordisms carrying compatible $\cC$-defects. The central
universal property is
\[
 \rFun^\otimes(\Bord_n^{\cC},\cD)
 \simeq \rFun(\cC,U\cD^{\fd}),
\]
where $\cD^{\fd}$ is the maximal symmetric monoidal subcategory with all
$n$-duals. Thus the construction is intended to be left adjoint to the
forgetful functor from categories with duals. For $\cC=*$, this is the
ordinary framed cobordism hypothesis \cite{Lurie}.

We describe the coefficient diagrams using opetopes and their equipped
Poincar\'e-dual fans. The cobordism hypothesis with singularities gives the
local cell-attachment calculation used in the unmarked opetopic case
(Proposition~\ref{prop:Er} and Theorem~\ref{thm:opetope}). Universal markings
are intended to encode invertible comparison cells
(Theorem~\ref{thm:u-opetope}). The passage from local shapes to a general
coefficient category is expressed by the colimit in
Theorem~\ref{thm:gch}. This approach builds on the opetopic description of
composition \cite{BaezDolan,ChengUniversality} and on geometric locality for
extended field theories \cite{GPGeometric,GPLocality}.

\paragraph{Branefolds, lasagna, and generalized Pontryagin--Thom isomorphism.}
For pointed coefficients, the distinguished object is identified with the
vacuum. For monoidal coefficients, delooping places the tensor operations
inside the categorical diagrams. Removing the resulting vacuum strata
leaves branching supports, which we call \emph{branefolds}; their decorated
bordisms are \emph{lasagnas}. Under the compatibility and stable-realization
conditions stated in Theorem~\ref{thm:lch}, they satisfy
\[
 \rFun^\otimes(\Lasa_n^{\cC,\otimes},\cD)
 \simeq \rFun^\otimes(\cC,\cD^{\fd}).
\]
Applying this universal property to additive Picard $\infty$-groupoids gives
the spectrum comparison of Theorem~\ref{thm:gPT} as a generalized Pontryagin--Thom isomorphism:
\[
 K(\cC)\simeq
 \operatorname{sp}\bigl|\Lasa_n^{\cC^{\cong},\otimes}\bigr|.
\]
Here $K(\cC)$ is defined by group completion of the symmetric monoidal
core. The additional geometric representation property in that theorem
identifies the indicated low-dimensional homotopy classes with lasagna
bordism classes.

\paragraph{Examples and the scope of the construction.}
For $\cC=(\Vect_F,\oplus)$, lasagna decorations are local systems on the
smooth pieces, with direct-sum identifications at branches
(Proposition~\ref{prop:ex-local-systems}). The examples recover the signed
rank description of $K_0(F)$ and describe $K_1(F)$ by monodromy, including
the stable framing correction. For $K_2(F)$, we study punctured surfaces,
the Steinberg group, and local surgery moves. Proposition~\ref{prop:ex-OCT}
gives an explicit filling for a pair of commuting elementary matrices.
The subsequent Ghys-type pictures and the five-term conjecture
(Conjecture~\ref{conj:ex-five-term}) describe further geometric constructions
suggested by the algebra.

The construction proceeds from local defect types to general coefficient
categories. The singularity cobordism hypothesis supplies the input for
the opetopic induction; the pointed and monoidal constructions then use
colimits and delooping. The lasagna and Pontryagin--Thom comparisons carry
the compatibility and realization conditions stated in
Section~\ref{sec:lasagna}. The framed links, collars, and coefficient
relations remain part of the geometric data throughout.

This viewpoint is closely related to the decorated foams of
Gepner--Im--Khovanov--Kitchloo, who connect flat projective-module
decorations and foam cobordisms with algebraic $K$-theory
\cite{GepnerImKhovanovKitchloo}. The present project emphasizes a
higher-categorical universal property and its pointed and monoidal
versions. Its terminology is also motivated by skein lasagna constructions
\cite{MorrisonWalkerWedrich}, although the intrinsic framed supports used
here are different from embedded surfaces in a fixed four-manifold.

\paragraph{Organization and conventions.}
Section~\ref{sec:bg} recalls the background. Section~\ref{sec:gch} develops
the opetopic and general defect constructions; Section~\ref{sec:lasagna}
passes to lasagnas and the spectrum comparison. Section~\ref{sec:examples}
studies vector-space coefficients, and Section~\ref{sec:further} lists
further directions. We write $(-)^{\cong}$ for the core and $(-)^{\fd}$ for
the maximal subcategory with all required duals. Throughout, $\rFun$ and
$\rMap$ denote spaces, rather than full functor higher-categories.

\paragraph{History and acknowledgments.}
The idea for this work dates to March 2024. I discussed it with Nils
Carqueville and received helpful feedback. The project was then postponed
because I could not construct the desired geometric presentations of the
relations in $K_2$. At the SMF Congress in June 2025, I learned from Julien
March\'e about the related problem of constructing Ghys manifolds
\cite{overcommuting}. Progress on these constructions in summer 2026 led me
to return to the project.

My original plan used factorization homology
\cite{AyalaFrancisRozenblyum} and the geometric cobordism hypothesis
\cite{GPGeometric}. John Francis pointed out difficulties with this use of
factorization homology and offered useful suggestions. While completing
the manuscript, I became aware of \cite{GepnerImKhovanovKitchloo}, whose
geometric viewpoint and predictions are closely related to this project.
Although the work was developed independently, the proposed universal
properties here provide a way to approach some of the questions raised
there.

The difficulties of finding collaborators and sustaining an academic
career affected the development of this project. I used large language
model tools to help organize the arguments and develop details of the
proof ideas. The manuscript is offered for further examination and
improvement; corrections and counterexamples are welcome. I especially
thank my friend Qian Tang for his help.

This work was supported by the Institut de Math\'ematiques de Bourgogne
(IMB), UMR 5584 CNRS and Universit\'e Bourgogne Europe. This material is also
based upon work supported by the National Science Foundation under Grant
No.~DMS-1928930, while the author was in residence at the Simons Laufer
Mathematical Sciences Institute (formerly MSRI) in Berkeley, California,
during the fall semester of 2026.

\section{Background}\label{sec:bg}
We recall the conventions and background used in the construction. The main
inputs are the framed cobordism hypothesis, the description of composition by
universal opetopic cells, and the relation between symmetric monoidal
groupoids and connective spectra.

\subsection{Higher categories and duals}
Fix $n\geq1$. Write $\Cat_n=\Cat_{(\infty,n)}$ and let $\SMCat_n$ be the category of small
symmetric monoidal $(\infty,n)$-categories and strong symmetric monoidal
functors. We write $U$ for the forgetful functor. The core $\cC^{\cong}$
is the maximal $\infty$-groupoid in $\cC$. Throughout, $\rFun$ and $\rMap$
denote spaces of functors or maps and their equivalences; thus
$\rFun(\cC,\cD)=\operatorname{Fun}(\cC,\cD)^{\cong}$. All limits, colimits,
fibers, and mapping spaces are derived.

A symmetric monoidal $(\infty,n)$-category has \emph{all $n$-duals} if its
objects have tensor duals and every $r$-morphism, $1\leq r<n$, has left and
right adjoints. These conditions hold internally, so the units and counits
have the adjoints required in their own degrees. Let $\SMCat_n^{\fd}$ denote
this subcategory. For a general $\cD$, write $\cD^{\fd}$ for its maximal
replete symmetric monoidal subcategory with all $n$-duals. This restricts
morphisms as well as objects; it is not merely the full subcategory on fully
dualizable objects \cite[Section 2.3]{Lurie}.

An adjoint is not an inverse: its unit and counit need not be equivalences.
Moreover, although tensor symmetry identifies the left and right duals of an
object, it does not generally identify the left and right adjoints of a
morphism. We retain the framing data of bends and their higher homotopies.

\subsection{The cobordism hypothesis and framed defects}
An $n$-framing of a $k$-manifold, $k\leq n$, is a trivialization
\[
 \tau:TM\oplus\underline{\bR}^{\,n-k}
 \xrightarrow{\cong}\underline{\bR}^{\,n}.
\]
The fully extended category $\Bord_n^{\fr}$ has framed points as objects,
collared framed $k$-bordisms as geometric $k$-morphisms, and diffeomorphisms,
isotopies, and their higher families above degree $n$. Its tensor product is
disjoint union. Evaluation at the positively framed point gives the framed
cobordism hypothesis
\begin{equation}\label{eq:bg-ch}
 \rFun^\otimes(\Bord_n^{\fr},\cD)
 \simeq (\cD^{\fd})^{\cong}.
\end{equation}
Thus $\Bord_n^{\fr}$ is free with all $n$-duals on one object
\cite[Theorem 2.4.6]{Lurie}. The geometric construction and the locality
of field theories are developed by Grady--Pavlov
\cite{GPGeometric,GPLocality}.

For defects, a codimension-$r$ stratum $W$ has a specified normal model
$W\times C(\Lambda)$, where $\Lambda$ is a closed structured
$(r-1)$-dimensional link. Its residual framing has rank $n-r$. The link,
radial direction, tubular identifications, and compatibility on incident
strata are part of the singularity datum. For a discrete collection of new
framed defect labels $e$, the relative singularity theorem identifies the
space of extensions of a fixed lower-depth theory $Z$ with
\begin{equation}\label{eq:bg-relative-ch}
 \operatorname{hofib}_{Z}\!\left[
 \rFun^\otimes(\mathcal B_{\leq r},\cD)
 \longrightarrow\rFun^\otimes(\mathcal B_{<r},\cD)\right]
 \simeq
 \prod_e\rMap_{\Omega^{r-1}\cD}(\one,Z(\Lambda_e)),
 \qquad \cD\in\SMCat_n^{\fd}.
\end{equation}
Here $\Omega^{r-1}\cD$ is the iterated endomorphism category of the unit,
and the displayed hom space is the core of its hom higher-category.
We use this resolved-cone form of the cobordism hypothesis with singularities
as a theorem input \cite[Theorem 4.3.11]{Lurie}. Applying it to a
normal-map or fan model requires a comparison with the specified singularity
data. 

\subsection{Opetopic sets, spaces, and universal cells}
An opetope is a higher-dimensional shape with a many-input, one-output
boundary. Its input faces are organized by a pasting tree, while its output
is one face of the same degree. For example, a binary $2$-opetope has the
form
\[
 x\xrightarrow{f}y\xrightarrow{g}z,
 \qquad h:x\longrightarrow z,
 \qquad \alpha:(f,g)\Longrightarrow h.
\]
It specifies a comparison from the input pasting to $h$, rather than the
strict equation $h=gf$. These shapes and their face maps form a category
$\bO$ \cite{BaezDolan,ChengOpetopes}.

\begin{definition}[Opetopic sets and spaces]\label{def:bg-opetopic}
An \emph{opetopic set} is a presheaf
$X:\bO^{\op}\to\mathrm{Set}$. An \emph{opetopic space} is a presheaf
$X:\bO^{\op}\to\Spc$. A point of $X(O)$ is a cell of shape $O$; its faces
are obtained by restriction along the face maps. For a fixed boundary
$b\in\rMap(\partial y(O),X)$, the space of fillers is
\[
 \operatorname{Fill}_X(b)
 :=\operatorname{hofib}_b\!\left[
 X(O)\longrightarrow\rMap(\partial y(O),X)\right],
\]
where $\partial y(O)$ is generated by the proper faces of $O$.
\end{definition}

A \emph{niche} specifies the complete input pasting and its lower boundary,
but leaves the output cell and its filler unspecified. A universal filler
selects a composite of the input. Nullary input pastings provide identities;
their lower boundary data must still be retained. Merely having fillers is
not enough: substitution of universal fillers must again be universal, with
coherent units and associativity \cite{BaezDolan}.

We use universality in a coherent representable model. If
$u:A\Rightarrow b$, let $\mathcal H(A;c)$ denote the higher category of cells
from the input pasting $A$ to a compatible output $c$. The factorization
criterion for universality is that substitution induce equivalences
\begin{equation}\label{eq:bg-universal}
 u^*:\mathcal H(b;c)\xrightarrow{\simeq}\mathcal H(A;c)
\end{equation}
for all compatible outputs and surrounding contexts. These are equivalences
of the full hom higher-categories, not only of their sets of components.
Without chosen composites, substitution is expressed by a composition span.
Cheng develops this formulation and compares it with the recursive
Baez--Dolan definition in dimensions at most two
\cite{ChengUniversality}.

\begin{lemma}[Universal cells and comparison equivalences]\label{lem:bg-universal}
In a coherent representable model, choose a universal compositor
$c_A:A\Rightarrow\langle A\rangle$. Write
\[
 \alpha:A\Rightarrow b,
 \qquad
 \alpha\simeq\bar\alpha\circ c_A,
 \qquad
 \bar\alpha:\langle A\rangle\Rightarrow b.
\]
Then $\alpha$ is universal if and only if $\bar\alpha$ is an equivalence.
\end{lemma}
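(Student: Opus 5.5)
The plan is to reduce the statement to a two-out-of-three property for the substitution equivalences \eqref{eq:bg-universal}, combined with a Yoneda-type recognition of equivalences between single cells. Substitution is functorial, up to coherent equivalence, in the coherent representable model. So the factorization $\alpha\simeq\bar\alpha\circ c_A$ gives, for every compatible output $c$ and every surrounding context, a commuting triangle
\[
 \alpha^*\simeq c_A^*\circ\bar\alpha^*:\ \mathcal H(b;c)\longrightarrow\mathcal H(\langle A\rangle;c)\longrightarrow\mathcal H(A;c).
\]
Here $\bar\alpha^*$ is precomposition with a cell whose input is the single cell $\langle A\rangle$. Since $c_A$ is universal by choice, $c_A^*$ is an equivalence for all $c$ and all contexts. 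By two-out-of-three for equivalences of higher categories, $\alpha^*$ is an equivalence for all $(c,\text{context})$ if and only if $\bar\alpha^*$ is. Thus $\alpha$ is universal if and only if $\bar\alpha$ is universal as a cell with unary input $\langle A\rangle$.

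It remains to show that a cell $\beta:a\Rightarrow b$ with a single input is universal exactly when it is an equivalence.

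\emph{Equivalence implies universal.} Suppose $\beta$ is an equivalence with inverse $\beta^{-1}$. Then $(\beta^{-1})^*$ is inverse to $\beta^*$ up to the invertible cells witnessing $\beta\beta^{-1}\simeq 1$ and $\beta^{-1}\beta\simeq 1$. Here I use that substitution of identities (nullary-input universal fillers) acts as the identity, which is part of the coherence of units in the model.

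\emph{Universal implies equivalence.} Suppose $\beta$ is universal. Take $c=a$. Essential surjectivity of $\beta^*:\mathcal H(b;a)\to\mathcal H(a;a)$ gives $\gamma:b\Rightarrow a$ with $\gamma\circ\beta\simeq 1_a$. Then $\beta\circ\gamma$ and $1_b$ are both in $\mathcal H(b;b)$, and both map under $\beta^*$ to cells equivalent to $\beta$. Full faithfulness of $\beta^*$ then gives $\beta\circ\gamma\simeq 1_b$. So $\beta$ is an equivalence. Because the criterion asks for equivalences of full hom higher categories, the same argument also lifts the witnessing 2-cells; this gives a coherent equivalence rather than merely one on components.

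The main obstacle is justifying the commuting triangle, that is, that substitution along a composite of a universal cell and a unary cell is coherently the composite of substitutions, naturally in the context. Without chosen composites, substitution is only defined through a composition span. I would first try to use universality of $c_A$ to pick the composite $\bar\alpha\circ c_A$ itself via that span, making the factorization hold by construction. Then I would invoke the associativity of substitution of universal fillers, which is assumed in the coherent representable model, to identify $\alpha^*$ with $c_A^*\bar\alpha^*$. The dependence of the comparison on the choice of $\bar\alpha$ is harmless: $c_A^*$ is an equivalence, so $\bar\alpha$ is determined up to a contractible space of choices.
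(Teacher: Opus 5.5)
Your proposal is correct and follows the paper's argument. The paper likewise factors substitution as $\alpha^*=c_A^*\bar\alpha^*$, uses that $c_A^*$ is an equivalence to reduce universality of $\alpha$ to $\bar\alpha^*$ being an equivalence in every context, and then invokes the representable criterion for equivalences. Your Yoneda argument for unary cells, and your remark that the factorization relies on the associativity and units of substitution, spell out steps the paper states in one line.
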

\begin{proof}
Substitution factors as $\alpha^*=c_A^*\bar\alpha^*$. Since $c_A^*$ is an
equivalence, the universality criterion for $\alpha$ is equivalent to
$\bar\alpha^*$ being an equivalence in every context. The representable
criterion for equivalences, applied in the relevant hom higher-category,
gives the assertion.
\end{proof}

For the binary example, universality means that
$\bar\alpha:gf\Rightarrow h$ is invertible. It does not require
$f$, $g$, or $h$ to be invertible. A collection of universal cells is
therefore not generally a subpresheaf: their input faces need not be universal.

\subsection{Higher universality and categorical realization}
We fix a coherent, complete, representable opetopic model: compositions are
represented by universal fillers, substitution preserves universality, and
equivalences are saturated. Completeness means that the homotopy data in the
model correctly identify categorical equivalences. A comparison with a
standard model of higher categories, such as complete $\Theta_n$-spaces
\cite{Rezk}, is part of the setup.

\begin{definition}[Opetopic $(\infty,n)$ convention]\label{def:bg-infty-n}
Within this fixed model, an \emph{opetopic $(\infty,n)$-category} is a
coherent representable opetopic space in which every cell of dimension
$r>n$ is universal. All higher cells and their homotopies are retained.
\end{definition}

By Lemma~\ref{lem:bg-universal}, this means that the corresponding unary
morphisms above dimension $n$ are equivalences. Conversely, higher
invertibility makes these higher comparison cells universal. This is the
usual meaning of $(\infty,n)$ in the chosen realization. It is not an
axiomatization of arbitrary marked presheaves by this condition alone.
In particular, we do not impose uniqueness of higher fillers: the finite
opetopic $n$-category definitions use a uniqueness condition above $n$, whereas
here those degrees carry higher homotopy \cite{ChengUniversality}.

For an equipped shape $O$, write $[O]$ for its coherent categorical
realization. We use the notation $\bO$ also for the chosen equipped shape
category, including the substitutions required for the realization. Assume
that the associated nerve is fully faithful and dense:
\[
 N_{\bO}:\Cat_n\longrightarrow\operatorname{PSh}(\bO),
 \qquad
 N_{\bO}(\cC)(O)=\rFun([O],\cC).
\]
These are assumptions on the model, not consequences of
Definition~\ref{def:bg-opetopic}.

For an opetopic space $X$, its category of elements is
$I_X=\int_{\bO}X$. An object is $(O,x)$ with $x\in X(O)$; an arrow
$(P,p)\to(O,x)$ consists of $\beta:P\to O$ and a path
$p\simeq\beta^*x$, with all higher coherence. Presheaf density, and then the
chosen realization, give
\begin{equation}\label{eq:bg-density}
 X\simeq\colim_{(O,x)\in I_X}y(O),
 \qquad
 \cC\simeq\colim_{(O,c)\in I_{N_{\bO}\cC}}[O].
\end{equation}
This reduces a colimit-preserving construction to its values on shapes.
For a geometric construction, the corresponding descent theorem must still
be proved: the existence of one mesh decomposition does not identify all
its refinements and parameter families.

If $u$ is a collection of positive-dimensional marked cells, define
$[O][u^{-1}]$ by inverting their unary comparison cells. Its universal
property is
\begin{equation}\label{eq:bg-marking}
 \rFun([O][u^{-1}],\cD)
 \simeq
 \{F\in\rFun([O],\cD):F(e)\text{ is universal for }e\in u\}.
\end{equation}
Coherent inverse data have contractible choice space over an equivalence.
Identifying a proposed geometric passage rule with this localization is a
separate local statement.

\subsection{Pointing, delooping, and group completion}
A pointed category $(\cC,c_*)$ has a specified object. A pointed functor
comes with an equivalence $F(c_*)\simeq d_*$, so
\[
 \rFun_*((\cC,c_*),(\cD,d_*))
 \simeq
 \rFun(\cC,\cD)
 \times^h_{\cD^{\cong}}\{d_*\}.
\]
In the bordism construction, the distinguished coefficient object is sent
to the tensor unit. Its pure identity decoration represents vacuum;
nonidentity endomorphisms of that object must remain as scalar coupons.

For an $E_m$-monoidal $(\infty,n)$-category $\cC$, its $m$-fold delooping
$B^m\cC$ has a distinguished tower of units below degree $m$, and
\[
 \operatorname{End}^m(B^m\cC)\simeq\cC
\]
as $E_m$-monoidal categories. We use connected deloopings; recovering an
arbitrary pointed category from its iterated endomorphisms requires a
connectivity hypothesis. In the symmetric monoidal case this can be
iterated compatibly \cite[Remark 4.3.18]{Lurie}. The shift explains the
geometry of lasagna: in $B\cC$, regions carry the unique object, while walls
and junctions carry objects and morphisms of $\cC$. Erasing vacuum regions
leaves the visible branefold support and its retained normal data.

For a symmetric monoidal category $\cC$, the core $\cC^{\cong}$ is an
$E_\infty$-space. Its group completion is grouplike and hence determines a
connective spectrum. Our convention is
\begin{equation}\label{eq:bg-K}
 \Omega^\infty K(\cC)=(\cC^{\cong})^{\mathrm{gp}}.
\end{equation}
For a connective spectrum $E$, let $\mathbb P(E)=\Omega^\infty E$ with its
additive Picard structure. Group completion is characterized by
\[
 \rMap_{\mathrm{Sp}_{\geq0}}(K(\cC),E)
 \simeq
 \rMap_{E_\infty}(\cC^{\cong},\Omega^\infty E)
\]
\cite{GGN}.
For $\cC=(\operatorname{Proj}_R,\oplus)$ this is the usual connective
$K(R)$, since exact sequences of finitely generated projectives split
\cite[Theorem IV.7.1]{WeibelKbook}. For a general exact category, group
completion of the core with direct sum need not recover its nonsplit
exact-category $K$-theory.

Finally, $|\mathcal B|$ denotes groupoid completion of the \emph{entire}
category $\mathcal B$, whereas $\mathcal B^{\cong}$ discards its
noninvertible morphisms. If $\mathcal B$ is symmetric monoidal and its
objects have duals, evaluation makes $[b^\vee]$ an additive inverse of
$[b]$ in $\pi_0|\mathcal B|$. Thus $|\mathcal B|$ is grouplike. We write
$\operatorname{sp}(|\mathcal B|)$ for its associated connective spectrum;
it is not the suspension spectrum of its underlying space. This convention
will be used in the Pontryagin--Thom and $K$-theory statements below.

\section{Generalized Cobordism Hypothesis}\label{sec:gch}
Let $\Cat_n$ denote the category of small $(\infty,n)$-categories, and let
$\Cat_n^{\fd}$ denote its subcategory of categories whose morphisms in degrees
$1,\ldots,n-1$ have left and right adjoints. Similarly, write
$\SMCat_n$ for symmetric monoidal $(\infty,n)$-categories and
$\SMCat_n^{\fd}$ for those with all $n$-duals: objects have tensor duals, and
morphisms below degree $n$ have left and right adjoints. Throughout,
$\rFun$ and $\rMap$ denote spaces of functors or maps and their equivalences;
all limits, colimits, and fibers are derived.

We seek a geometric construction of the left adjoint
\[
  \Cat_n
  \substack{\overset{L_n=\Bord_n^{(-)}}{\longrightarrow}\\
  \underset{U=\mathrm{forget}}{\longleftarrow}}
  \SMCat_n^{\fd}.
\]
Informally, $\Bord_n^{\cC}$ consists of framed manifolds carrying string
diagrams, or defects, compatibly labeled by $\cC$. For a symmetric monoidal
$(\infty,n)$-category $\cD^\otimes$, define the space of fully extended field
theories with $\cC$-defects by
\[
  \mathrm{TFT}^{\cC}(\cD^\otimes)
  :=\rFun^\otimes(\Bord_n^{\cC},\cD^\otimes).
\]
Once the construction and its universal property are established, we obtain
\[
\begin{aligned}
  \rFun^\otimes(\Bord_n^{\cC},\cD^\otimes)
  &\simeq\rFun^\otimes(\Bord_n^{\cC},\cD^{\otimes,\fd})\\
  &\simeq\rFun(\cC,U\cD^{\otimes,\fd}).
\end{aligned}
\]
The first equivalence follows because $\Bord_n^{\cC}$ has all $n$-duals;
the second is the desired adjunction. Here $\cD^{\otimes,\fd}$ is the maximal
replete symmetric monoidal subcategory with all $n$-duals, restricting
morphisms as well as objects. Taking $\cC=*$ recovers the framed cobordism
hypothesis (CH) \cite{Lurie}. We henceforth suppress $U$ on underlying targets
when no ambiguity arises.

We work with a coherent opetopic presentation of $\Cat_n$, motivated by
\cite{BaezDolan,ChengUniversality}. Fix a category of equipped shapes $\bO$ and coherent
realizations $[O]$, with fully faithful, dense nerve
\[
  N_{\bO}:\Cat_n\longrightarrow\mathrm{PSh}(\bO),
  \qquad
  \cC\longmapsto\bigl(O\longmapsto\rFun([O],\cC)\bigr).
\]
The realization and density properties are assumptions on the chosen model.

For an opetopic space $C:\bO^{\op}\to\Spc$, let
\[
 I_C=\int_{\bO}C.
\]
An object of $I_C$ is a pair $(O,c)$ with $c\in C(O)$. An arrow
$(P,p)\to(O,c)$ consists of a shape map $\alpha:P\to O$ and an identification
$p\simeq\alpha^*c$, with all higher coherence retained. Thus $I_C\to\bO$ is
the associated right fibration.

Density gives
\[
 C\simeq\colim_{(O,c)\in I_C}y(O),
 \qquad
 \cC\simeq\colim_{(O,c)\in I_{N_{\bO}\cC}}[O].
\]
A left adjoint must therefore satisfy
\[
 \Bord_n^{\cC}
 \simeq\colim_{(O,c)\in I_{N_{\bO}\cC}}\Bord_n^{[O]}.
\]
We first construct $\Bord_n^O:=\Bord_n^{[O]}$ for an equipped opetope $O$,
with the intended universal property
\[
 \rFun^\otimes(\Bord_n^O,\cD^{\fd})
 \simeq\rFun([O],\cD^{\fd})
 =\{\text{coherent $O$-shaped diagrams in }\cD^{\fd}\}.
\]

\subsection{Opetopic Cobordism Hypothesis}\label{subsec:opetopic-ch}

Let $O$ be an $m$-dimensional opetope, initially with $m\leq n$. Choose an
equipped Poincar\'e-dual fan $PD(O)$: a conical stratification of $\bR^m$
with specified compatible normal framings $(e_1,\ldots,e_r)$ on its
codimension-$r$ strata. Write $PD^{(r)}(O)$ for the set of these strata.
The compatible framed link and collar data are part of this choice.

\begin{center}

\tikzset{every picture/.style={line width=0.75pt}} 

\begin{adjustbox}{max totalsize={\linewidth}{0.86\textheight}}
\begin{tikzpicture}[x=0.75pt,y=0.75pt,yscale=-1,xscale=1]

\draw    (80,120) -- (99.26,71.86) ;
\draw [shift={(100,70)}, rotate = 111.8] [color={rgb, 255:red, 0; green, 0; blue, 0 }  ][line width=0.75]    (10.93,-3.29) .. controls (6.95,-1.4) and (3.31,-0.3) .. (0,0) .. controls (3.31,0.3) and (6.95,1.4) .. (10.93,3.29)   ;
\draw    (100,70) -- (148,70) ;
\draw [shift={(150,70)}, rotate = 180] [color={rgb, 255:red, 0; green, 0; blue, 0 }  ][line width=0.75]    (10.93,-3.29) .. controls (6.95,-1.4) and (3.31,-0.3) .. (0,0) .. controls (3.31,0.3) and (6.95,1.4) .. (10.93,3.29)   ;
\draw    (150,70) -- (169.26,118.14) ;
\draw [shift={(170,120)}, rotate = 248.2] [color={rgb, 255:red, 0; green, 0; blue, 0 }  ][line width=0.75]    (10.93,-3.29) .. controls (6.95,-1.4) and (3.31,-0.3) .. (0,0) .. controls (3.31,0.3) and (6.95,1.4) .. (10.93,3.29)   ;
\draw    (80,120) -- (168,120) ;
\draw [shift={(170,120)}, rotate = 180] [color={rgb, 255:red, 0; green, 0; blue, 0 }  ][line width=0.75]    (10.93,-3.29) .. controls (6.95,-1.4) and (3.31,-0.3) .. (0,0) .. controls (3.31,0.3) and (6.95,1.4) .. (10.93,3.29)   ;
\draw    (121.5,89.99) -- (121.58,102.66)(118.5,90.01) -- (118.58,102.68) ;
\draw [shift={(120.13,110.67)}, rotate = 269.63] [color={rgb, 255:red, 0; green, 0; blue, 0 }  ][line width=0.75]    (10.93,-3.29) .. controls (6.95,-1.4) and (3.31,-0.3) .. (0,0) .. controls (3.31,0.3) and (6.95,1.4) .. (10.93,3.29)   ;
\draw  [draw opacity=0][fill={rgb, 255:red, 248; green, 231; blue, 28 }  ,fill opacity=1 ] (240,50) -- (320,50) -- (320,130) -- (240,130) -- cycle ;
\draw    (240,50) -- (280,90) ;
\draw    (280,90) -- (280,130) ;
\draw    (280,50) -- (280,90) ;
\draw    (280,90) -- (320,50) ;
\draw [color={rgb, 255:red, 74; green, 144; blue, 226 }  ,draw opacity=1 ]   (280,90) -- (298,90) ;
\draw [shift={(300,90)}, rotate = 180] [color={rgb, 255:red, 74; green, 144; blue, 226 }  ,draw opacity=1 ][line width=0.75]    (10.93,-3.29) .. controls (6.95,-1.4) and (3.31,-0.3) .. (0,0) .. controls (3.31,0.3) and (6.95,1.4) .. (10.93,3.29)   ;
\draw [color={rgb, 255:red, 74; green, 144; blue, 226 }  ,draw opacity=1 ]   (280,90) -- (280,110) ;
\draw [shift={(280,110)}, rotate = 270] [color={rgb, 255:red, 74; green, 144; blue, 226 }  ,draw opacity=1 ][line width=0.75]    (17.64,-3.29) .. controls (13.66,-1.4) and (10.02,-0.3) .. (6.71,0) .. controls (10.02,0.3) and (13.66,1.4) .. (17.64,3.29)(10.93,-3.29) .. controls (6.95,-1.4) and (3.31,-0.3) .. (0,0) .. controls (3.31,0.3) and (6.95,1.4) .. (10.93,3.29)   ;

\draw [color={rgb, 255:red, 74; green, 144; blue, 226 }  ,draw opacity=1 ]   (280,120) -- (298,120) ;
\draw [shift={(300,120)}, rotate = 180] [color={rgb, 255:red, 74; green, 144; blue, 226 }  ,draw opacity=1 ][line width=0.75]    (10.93,-3.29) .. controls (6.95,-1.4) and (3.31,-0.3) .. (0,0) .. controls (3.31,0.3) and (6.95,1.4) .. (10.93,3.29)   ;
\draw [color={rgb, 255:red, 74; green, 144; blue, 226 }  ,draw opacity=1 ]   (250,60) -- (268,60) ;
\draw [shift={(270,60)}, rotate = 180] [color={rgb, 255:red, 74; green, 144; blue, 226 }  ,draw opacity=1 ][line width=0.75]    (10.93,-3.29) .. controls (6.95,-1.4) and (3.31,-0.3) .. (0,0) .. controls (3.31,0.3) and (6.95,1.4) .. (10.93,3.29)   ;
\draw [color={rgb, 255:red, 74; green, 144; blue, 226 }  ,draw opacity=1 ]   (280,60) -- (298,60) ;
\draw [shift={(300,60)}, rotate = 180] [color={rgb, 255:red, 74; green, 144; blue, 226 }  ,draw opacity=1 ][line width=0.75]    (10.93,-3.29) .. controls (6.95,-1.4) and (3.31,-0.3) .. (0,0) .. controls (3.31,0.3) and (6.95,1.4) .. (10.93,3.29)   ;
\draw [color={rgb, 255:red, 74; green, 144; blue, 226 }  ,draw opacity=1 ]   (300,70) -- (318,70) ;
\draw [shift={(320,70)}, rotate = 180] [color={rgb, 255:red, 74; green, 144; blue, 226 }  ,draw opacity=1 ][line width=0.75]    (10.93,-3.29) .. controls (6.95,-1.4) and (3.31,-0.3) .. (0,0) .. controls (3.31,0.3) and (6.95,1.4) .. (10.93,3.29)   ;

\draw (121,123.4) node [anchor=north west][inner sep=0.75pt]    {$O$};
\draw (260,142.4) node [anchor=north west][inner sep=0.75pt]    {$PD( O)$};

\end{tikzpicture}
\end{adjustbox}

\end{center}

Intersecting the fan with a small unit sphere gives its framed stratified
link $\Lambda(O)$. We write $\Lambda(O)=\partial PD(O)$ for this link,
although the open fan itself has no boundary. Then
\[
 PD(O)\cong\mathring C(\Lambda(O)).
\]
For a stratum $S$, let $O_S$ denote its prescribed local normal shape.
The chosen normal germs identify a neighborhood of $S$ with
\[
 S\times PD(O_S)
 \cong S\times\mathring C(\Lambda(O_S)).
\]
These identifications are required to agree on incident strata and on
punctured tubes. We use the resolved-cone model specified by these data.
For a manifold with corners, the chosen core faces $Q$ include the interior,
boundary faces, and their corner intersections.

\begin{definition}[Manifold with $O$-stratification]\label{def:Ostr}
Let $0\leq k\leq n$, and let $(M,\tau)$ be a $k$-manifold with corners and
an $n$-framing $\tau=(v_1,\ldots,v_n)$ of
$TM\oplus\underline{\bR}^{\,n-k}$.
An \emph{resolved $O$-stratification} is a locally finite stratification
$\mathcal W=\{W_t\}$ with label maps
\[
 l:\mathcal W^{(r)}\longrightarrow PD^{(r)}(O)
\]
and the following compatible data for every codimension-$r$ stratum $W_t$:
\begin{enumerate}
  \item a residual $(n-r)$-framing
  \[
  \tau_t:TW_t\oplus\underline{\bR}^{\,n-k}
  \xrightarrow{\cong}\underline{\bR}^{\,n-r};
  \]
  \item a specified stratified tubular germ
  \[
  N(W_t)\cong PD(O_{l(W_t)})\times W_t,
  \]
  compatible with the ambient framing through the normal splitting
  $(e_1,\ldots,e_r,\tau_t)$; these germs agree on punctured tubes and
  iterated incidence charts;
  \item neat intersections with every chosen boundary or corner face $Q$,
  of dimension $\dim Q-r$, with empty intersection if this is negative.
  All structures are products on the corresponding collars.
\end{enumerate}

An associated \emph{$O$-normal map} is a germ
\[
 F:\widehat M:=\operatorname{germ}_{M\times0}
 (M\times\bR^{n-k})\longrightarrow PD(O),
\]
using the boundary collar extensions. For every chosen core face $Q$,
every stratum $S\in PD^{(r)}(O)$, and every $x\in Q\cap F^{-1}(S)$,
require:
\begin{enumerate}
  \item \emph{Transversality:} the normal derivative
  \[
  T_xQ\xr{d(F|_Q)_x}T_{F(x)}\bR^m
  \xr{q_S}N_{F(x)}S
  \]
  is surjective;
  \item \emph{Frame compatibility:}
  \[
    q_S\bigl(dF(v_a)\bigr)=e_a,
    \qquad 1\leq a\leq r.
  \]
\end{enumerate}
Normal bundles are identified with coordinate spaces using their chosen
frames. The inverse images $W_t$ recover the labeled strata. The residual
frame is obtained by projection, not by merely restricting the remaining
ambient vectors. If $\lambda_a(v_j)$ are the coordinates of
$q_S(dF(v_j))$, it is represented by
\[
 w_j=v_j-\sum_{a=1}^r\lambda_a(v_j)v_a,
 \qquad r<j\leq n.
\]
Identifying this map presentation with a resolved stratification includes
the compatible tubular data specified above. Every slice of an admissible
parameter family satisfies the same core and face conditions.

Define $\Bord_n^O$ to be the completed symmetric monoidal
$(\infty,n)$-category of compact collared $n$-framed bordisms with these
resolved $O$-stratifications. Composition is collar gluing and tensor
product is disjoint union; the construction retains the spaces of
isotopies and higher families.
\end{definition}

Normal maps are useful for studying isotopies and automorphisms of these
structures. If $O_{\leq r}$ denotes the cells of dimension at most $r$,
we similarly obtain the lower-depth category $\Bord_n^{O_{\leq r}}$.

\begin{center}

\tikzset{every picture/.style={line width=0.75pt}} 

\begin{adjustbox}{max totalsize={\linewidth}{0.86\textheight}}
\begin{tikzpicture}[x=0.75pt,y=0.75pt,yscale=-1,xscale=1]

\draw  [draw opacity=0][fill={rgb, 255:red, 248; green, 231; blue, 28 }  ,fill opacity=1 ] (260,70) -- (340,70) -- (340,150) -- (260,150) -- cycle ;
\draw    (260,70) -- (300,110) ;
\draw    (300,110) -- (300,150) ;
\draw    (300,70) -- (300,110) ;
\draw    (300,110) -- (340,70) ;
\draw [color={rgb, 255:red, 74; green, 144; blue, 226 }  ,draw opacity=1 ]   (300,110) -- (318,110) ;
\draw [shift={(320,110)}, rotate = 180] [color={rgb, 255:red, 74; green, 144; blue, 226 }  ,draw opacity=1 ][line width=0.75]    (10.93,-3.29) .. controls (6.95,-1.4) and (3.31,-0.3) .. (0,0) .. controls (3.31,0.3) and (6.95,1.4) .. (10.93,3.29)   ;
\draw [color={rgb, 255:red, 74; green, 144; blue, 226 }  ,draw opacity=1 ]   (300,110) -- (300,130) ;
\draw [shift={(300,130)}, rotate = 270] [color={rgb, 255:red, 74; green, 144; blue, 226 }  ,draw opacity=1 ][line width=0.75]    (17.64,-3.29) .. controls (13.66,-1.4) and (10.02,-0.3) .. (6.71,0) .. controls (10.02,0.3) and (13.66,1.4) .. (17.64,3.29)(10.93,-3.29) .. controls (6.95,-1.4) and (3.31,-0.3) .. (0,0) .. controls (3.31,0.3) and (6.95,1.4) .. (10.93,3.29)   ;

\draw [color={rgb, 255:red, 74; green, 144; blue, 226 }  ,draw opacity=1 ]   (300,140) -- (318,140) ;
\draw [shift={(320,140)}, rotate = 180] [color={rgb, 255:red, 74; green, 144; blue, 226 }  ,draw opacity=1 ][line width=0.75]    (10.93,-3.29) .. controls (6.95,-1.4) and (3.31,-0.3) .. (0,0) .. controls (3.31,0.3) and (6.95,1.4) .. (10.93,3.29)   ;
\draw [color={rgb, 255:red, 74; green, 144; blue, 226 }  ,draw opacity=1 ]   (270,80) -- (288,80) ;
\draw [shift={(290,80)}, rotate = 180] [color={rgb, 255:red, 74; green, 144; blue, 226 }  ,draw opacity=1 ][line width=0.75]    (10.93,-3.29) .. controls (6.95,-1.4) and (3.31,-0.3) .. (0,0) .. controls (3.31,0.3) and (6.95,1.4) .. (10.93,3.29)   ;
\draw [color={rgb, 255:red, 74; green, 144; blue, 226 }  ,draw opacity=1 ]   (300,80) -- (318,80) ;
\draw [shift={(320,80)}, rotate = 180] [color={rgb, 255:red, 74; green, 144; blue, 226 }  ,draw opacity=1 ][line width=0.75]    (10.93,-3.29) .. controls (6.95,-1.4) and (3.31,-0.3) .. (0,0) .. controls (3.31,0.3) and (6.95,1.4) .. (10.93,3.29)   ;
\draw [color={rgb, 255:red, 74; green, 144; blue, 226 }  ,draw opacity=1 ]   (320,90) -- (338,90) ;
\draw [shift={(340,90)}, rotate = 180] [color={rgb, 255:red, 74; green, 144; blue, 226 }  ,draw opacity=1 ][line width=0.75]    (10.93,-3.29) .. controls (6.95,-1.4) and (3.31,-0.3) .. (0,0) .. controls (3.31,0.3) and (6.95,1.4) .. (10.93,3.29)   ;
\draw  [dash pattern={on 4.5pt off 4.5pt}]  (259,97) .. controls (299,67) and (299.64,127.42) .. (339.64,97.42) ;
\draw  [dash pattern={on 4.5pt off 4.5pt}]  (259,123) .. controls (290.64,121.42) and (299.64,153.42) .. (339.64,123.42) ;
\draw    (120.67,91.67) .. controls (160.67,61.67) and (161.31,122.09) .. (201.31,92.09) ;
\draw    (120.67,117.67) .. controls (152.31,116.09) and (161.31,148.09) .. (201.31,118.09) ;
\draw  [fill={rgb, 255:red, 0; green, 0; blue, 0 }  ,fill opacity=1 ] (137.25,82.82) .. controls (137.25,81.38) and (138.41,80.22) .. (139.85,80.22) .. controls (141.28,80.22) and (142.45,81.38) .. (142.45,82.82) .. controls (142.45,84.26) and (141.28,85.42) .. (139.85,85.42) .. controls (138.41,85.42) and (137.25,84.26) .. (137.25,82.82) -- cycle ;
\draw  [fill={rgb, 255:red, 0; green, 0; blue, 0 }  ,fill opacity=1 ] (158.58,92.15) .. controls (158.58,90.72) and (159.74,89.55) .. (161.18,89.55) .. controls (162.62,89.55) and (163.78,90.72) .. (163.78,92.15) .. controls (163.78,93.59) and (162.62,94.75) .. (161.18,94.75) .. controls (159.74,94.75) and (158.58,93.59) .. (158.58,92.15) -- cycle ;
\draw  [fill={rgb, 255:red, 0; green, 0; blue, 0 }  ,fill opacity=1 ] (158.91,129.82) .. controls (158.91,128.38) and (160.08,127.22) .. (161.51,127.22) .. controls (162.95,127.22) and (164.11,128.38) .. (164.11,129.82) .. controls (164.11,131.26) and (162.95,132.42) .. (161.51,132.42) .. controls (160.08,132.42) and (158.91,131.26) .. (158.91,129.82) -- cycle ;
\draw  [fill={rgb, 255:red, 0; green, 0; blue, 0 }  ,fill opacity=1 ] (167.25,96.82) .. controls (167.25,95.38) and (168.41,94.22) .. (169.85,94.22) .. controls (171.28,94.22) and (172.45,95.38) .. (172.45,96.82) .. controls (172.45,98.26) and (171.28,99.42) .. (169.85,99.42) .. controls (168.41,99.42) and (167.25,98.26) .. (167.25,96.82) -- cycle ;
\draw [color={rgb, 255:red, 74; green, 144; blue, 226 }  ,draw opacity=1 ]   (139.85,82.82) -- (157.85,82.82) ;
\draw [shift={(159.85,82.82)}, rotate = 180] [color={rgb, 255:red, 74; green, 144; blue, 226 }  ,draw opacity=1 ][line width=0.75]    (10.93,-3.29) .. controls (6.95,-1.4) and (3.31,-0.3) .. (0,0) .. controls (3.31,0.3) and (6.95,1.4) .. (10.93,3.29)   ;
\draw [color={rgb, 255:red, 74; green, 144; blue, 226 }  ,draw opacity=1 ]   (161.18,92.15) -- (173.7,103.23) ;
\draw [shift={(175.2,104.55)}, rotate = 221.49] [color={rgb, 255:red, 74; green, 144; blue, 226 }  ,draw opacity=1 ][line width=0.75]    (10.93,-3.29) .. controls (6.95,-1.4) and (3.31,-0.3) .. (0,0) .. controls (3.31,0.3) and (6.95,1.4) .. (10.93,3.29)   ;
\draw [color={rgb, 255:red, 74; green, 144; blue, 226 }  ,draw opacity=1 ]   (169.85,96.82) -- (186.05,104.38) ;
\draw [shift={(187.87,105.22)}, rotate = 204.99] [color={rgb, 255:red, 74; green, 144; blue, 226 }  ,draw opacity=1 ][line width=0.75]    (10.93,-3.29) .. controls (6.95,-1.4) and (3.31,-0.3) .. (0,0) .. controls (3.31,0.3) and (6.95,1.4) .. (10.93,3.29)   ;
\draw [color={rgb, 255:red, 74; green, 144; blue, 226 }  ,draw opacity=1 ]   (161.51,129.82) -- (179.58,133.79) ;
\draw [shift={(181.53,134.22)}, rotate = 192.4] [color={rgb, 255:red, 74; green, 144; blue, 226 }  ,draw opacity=1 ][line width=0.75]    (10.93,-3.29) .. controls (6.95,-1.4) and (3.31,-0.3) .. (0,0) .. controls (3.31,0.3) and (6.95,1.4) .. (10.93,3.29)   ;

\end{tikzpicture}
\end{adjustbox}

\tikzset{every picture/.style={line width=0.75pt}} 

\begin{adjustbox}{max totalsize={\linewidth}{0.86\textheight}}
\begin{tikzpicture}[x=0.75pt,y=0.75pt,yscale=-1,xscale=1]

\draw  [draw opacity=0][fill={rgb, 255:red, 248; green, 231; blue, 28 }  ,fill opacity=1 ] (280,81.32) -- (360,81.32) -- (360,161.32) -- (280,161.32) -- cycle ;
\draw  [fill={rgb, 255:red, 245; green, 166; blue, 35 }  ,fill opacity=1 ][dash pattern={on 4.5pt off 4.5pt}] (305.71,87.03) .. controls (306.13,87.01) and (306.57,87) .. (307,87) .. controls (323.57,87) and (337,102.67) .. (337,122) .. controls (337,141.33) and (323.57,157) .. (307,157) .. controls (306.51,157) and (306.02,156.99) .. (305.53,156.96) -- cycle ;
\draw    (280,81.32) -- (320,121.32) ;
\draw    (320,121.32) -- (320,161.32) ;
\draw    (320,81.32) -- (320,121.32) ;
\draw    (320,121.32) -- (360,81.32) ;
\draw [color={rgb, 255:red, 74; green, 144; blue, 226 }  ,draw opacity=1 ]   (320,121.32) -- (338,121.32) ;
\draw [shift={(340,121.32)}, rotate = 180] [color={rgb, 255:red, 74; green, 144; blue, 226 }  ,draw opacity=1 ][line width=0.75]    (10.93,-3.29) .. controls (6.95,-1.4) and (3.31,-0.3) .. (0,0) .. controls (3.31,0.3) and (6.95,1.4) .. (10.93,3.29)   ;
\draw [color={rgb, 255:red, 74; green, 144; blue, 226 }  ,draw opacity=1 ]   (320,121.32) -- (320,141.32) ;
\draw [shift={(320,141.32)}, rotate = 270] [color={rgb, 255:red, 74; green, 144; blue, 226 }  ,draw opacity=1 ][line width=0.75]    (17.64,-3.29) .. controls (13.66,-1.4) and (10.02,-0.3) .. (6.71,0) .. controls (10.02,0.3) and (13.66,1.4) .. (17.64,3.29)(10.93,-3.29) .. controls (6.95,-1.4) and (3.31,-0.3) .. (0,0) .. controls (3.31,0.3) and (6.95,1.4) .. (10.93,3.29)   ;

\draw [color={rgb, 255:red, 74; green, 144; blue, 226 }  ,draw opacity=1 ]   (320,151.32) -- (338,151.32) ;
\draw [shift={(340,151.32)}, rotate = 180] [color={rgb, 255:red, 74; green, 144; blue, 226 }  ,draw opacity=1 ][line width=0.75]    (10.93,-3.29) .. controls (6.95,-1.4) and (3.31,-0.3) .. (0,0) .. controls (3.31,0.3) and (6.95,1.4) .. (10.93,3.29)   ;
\draw [color={rgb, 255:red, 74; green, 144; blue, 226 }  ,draw opacity=1 ]   (290,91.32) -- (308,91.32) ;
\draw [shift={(310,91.32)}, rotate = 180] [color={rgb, 255:red, 74; green, 144; blue, 226 }  ,draw opacity=1 ][line width=0.75]    (10.93,-3.29) .. controls (6.95,-1.4) and (3.31,-0.3) .. (0,0) .. controls (3.31,0.3) and (6.95,1.4) .. (10.93,3.29)   ;
\draw [color={rgb, 255:red, 74; green, 144; blue, 226 }  ,draw opacity=1 ]   (320,91.32) -- (338,91.32) ;
\draw [shift={(340,91.32)}, rotate = 180] [color={rgb, 255:red, 74; green, 144; blue, 226 }  ,draw opacity=1 ][line width=0.75]    (10.93,-3.29) .. controls (6.95,-1.4) and (3.31,-0.3) .. (0,0) .. controls (3.31,0.3) and (6.95,1.4) .. (10.93,3.29)   ;
\draw [color={rgb, 255:red, 74; green, 144; blue, 226 }  ,draw opacity=1 ]   (340,101.32) -- (358,101.32) ;
\draw [shift={(360,101.32)}, rotate = 180] [color={rgb, 255:red, 74; green, 144; blue, 226 }  ,draw opacity=1 ][line width=0.75]    (10.93,-3.29) .. controls (6.95,-1.4) and (3.31,-0.3) .. (0,0) .. controls (3.31,0.3) and (6.95,1.4) .. (10.93,3.29)   ;
\draw  [fill={rgb, 255:red, 245; green, 166; blue, 35 }  ,fill opacity=1 ] (111.55,158.42) .. controls (111.51,157.69) and (111.49,156.96) .. (111.48,156.22) .. controls (111.3,128.03) and (137.82,105) .. (170.71,104.79) .. controls (203.6,104.57) and (230.41,127.26) .. (230.59,155.45) .. controls (230.6,156.29) and (230.58,157.12) .. (230.54,157.95) -- cycle ;
\draw  [fill={rgb, 255:red, 0; green, 0; blue, 0 }  ,fill opacity=1 ] (144.14,114.18) .. controls (141.7,114.2) and (139.71,112.23) .. (139.69,109.78) .. controls (139.67,107.34) and (141.64,105.35) .. (144.09,105.33) .. controls (146.53,105.32) and (148.52,107.28) .. (148.54,109.73) .. controls (148.55,112.17) and (146.59,114.16) .. (144.14,114.18) -- cycle ;
\draw  [fill={rgb, 255:red, 0; green, 0; blue, 0 }  ,fill opacity=1 ] (117.7,138.82) .. controls (115.26,138.84) and (113.26,136.87) .. (113.25,134.43) .. controls (113.23,131.98) and (115.2,129.99) .. (117.64,129.97) .. controls (120.09,129.96) and (122.08,131.92) .. (122.09,134.37) .. controls (122.11,136.81) and (120.14,138.81) .. (117.7,138.82) -- cycle ;
\draw  [fill={rgb, 255:red, 0; green, 0; blue, 0 }  ,fill opacity=1 ] (145.76,162.46) .. controls (143.32,162.48) and (141.32,160.51) .. (141.31,158.07) .. controls (141.29,155.62) and (143.26,153.63) .. (145.7,153.62) .. controls (148.15,153.6) and (150.14,155.57) .. (150.16,158.01) .. controls (150.17,160.45) and (148.2,162.45) .. (145.76,162.46) -- cycle ;
\draw  [fill={rgb, 255:red, 0; green, 0; blue, 0 }  ,fill opacity=1 ] (224.22,138.13) .. controls (221.78,138.15) and (219.78,136.18) .. (219.77,133.74) .. controls (219.75,131.29) and (221.72,129.3) .. (224.16,129.28) .. controls (226.61,129.27) and (228.6,131.24) .. (228.62,133.68) .. controls (228.63,136.12) and (226.66,138.12) .. (224.22,138.13) -- cycle ;
\draw    (117.67,134.4) -- (224.19,133.71) ;
\draw    (145.73,158.04) -- (170.93,134.05) ;
\draw    (144.11,109.76) -- (170.93,134.05) ;
\draw [color={rgb, 255:red, 74; green, 144; blue, 226 }  ,draw opacity=1 ]   (117.83,131.86) -- (128.84,117.63) ;
\draw [shift={(130.07,116.05)}, rotate = 127.74] [color={rgb, 255:red, 74; green, 144; blue, 226 }  ,draw opacity=1 ][line width=0.75]    (10.93,-3.29) .. controls (6.95,-1.4) and (3.31,-0.3) .. (0,0) .. controls (3.31,0.3) and (6.95,1.4) .. (10.93,3.29)   ;
\draw [color={rgb, 255:red, 74; green, 144; blue, 226 }  ,draw opacity=1 ]   (117.83,131.86) -- (133.64,144.11) ;
\draw [shift={(133.64,144.11)}, rotate = 217.74] [color={rgb, 255:red, 74; green, 144; blue, 226 }  ,draw opacity=1 ][line width=0.75]    (17.64,-3.29) .. controls (13.66,-1.4) and (10.02,-0.3) .. (6.71,0) .. controls (10.02,0.3) and (13.66,1.4) .. (17.64,3.29)(10.93,-3.29) .. controls (6.95,-1.4) and (3.31,-0.3) .. (0,0) .. controls (3.31,0.3) and (6.95,1.4) .. (10.93,3.29)   ;

\draw [color={rgb, 255:red, 74; green, 144; blue, 226 }  ,draw opacity=1 ]   (225.14,133.06) -- (214.46,118.57) ;
\draw [shift={(213.27,116.96)}, rotate = 53.61] [color={rgb, 255:red, 74; green, 144; blue, 226 }  ,draw opacity=1 ][line width=0.75]    (10.93,-3.29) .. controls (6.95,-1.4) and (3.31,-0.3) .. (0,0) .. controls (3.31,0.3) and (6.95,1.4) .. (10.93,3.29)   ;
\draw [color={rgb, 255:red, 74; green, 144; blue, 226 }  ,draw opacity=1 ]   (225.14,133.06) -- (241.24,121.19) ;
\draw [shift={(241.24,121.19)}, rotate = 143.61] [color={rgb, 255:red, 74; green, 144; blue, 226 }  ,draw opacity=1 ][line width=0.75]    (17.64,-3.29) .. controls (13.66,-1.4) and (10.02,-0.3) .. (6.71,0) .. controls (10.02,0.3) and (13.66,1.4) .. (17.64,3.29)(10.93,-3.29) .. controls (6.95,-1.4) and (3.31,-0.3) .. (0,0) .. controls (3.31,0.3) and (6.95,1.4) .. (10.93,3.29)   ;

\draw [color={rgb, 255:red, 74; green, 144; blue, 226 }  ,draw opacity=1 ]   (143.37,109.63) -- (159.69,102.03) ;
\draw [shift={(161.5,101.19)}, rotate = 155.03] [color={rgb, 255:red, 74; green, 144; blue, 226 }  ,draw opacity=1 ][line width=0.75]    (10.93,-3.29) .. controls (6.95,-1.4) and (3.31,-0.3) .. (0,0) .. controls (3.31,0.3) and (6.95,1.4) .. (10.93,3.29)   ;
\draw [color={rgb, 255:red, 74; green, 144; blue, 226 }  ,draw opacity=1 ]   (143.37,109.63) -- (151.81,127.76) ;
\draw [shift={(151.81,127.76)}, rotate = 245.03] [color={rgb, 255:red, 74; green, 144; blue, 226 }  ,draw opacity=1 ][line width=0.75]    (17.64,-3.29) .. controls (13.66,-1.4) and (10.02,-0.3) .. (6.71,0) .. controls (10.02,0.3) and (13.66,1.4) .. (17.64,3.29)(10.93,-3.29) .. controls (6.95,-1.4) and (3.31,-0.3) .. (0,0) .. controls (3.31,0.3) and (6.95,1.4) .. (10.93,3.29)   ;

\draw [color={rgb, 255:red, 74; green, 144; blue, 226 }  ,draw opacity=1 ]   (169.17,133.79) -- (169.33,115.79) ;
\draw [shift={(169.35,113.79)}, rotate = 90.52] [color={rgb, 255:red, 74; green, 144; blue, 226 }  ,draw opacity=1 ][line width=0.75]    (10.93,-3.29) .. controls (6.95,-1.4) and (3.31,-0.3) .. (0,0) .. controls (3.31,0.3) and (6.95,1.4) .. (10.93,3.29)   ;
\draw [color={rgb, 255:red, 74; green, 144; blue, 226 }  ,draw opacity=1 ]   (169.17,133.79) -- (189.16,133.97) ;
\draw [shift={(189.16,133.97)}, rotate = 180.52] [color={rgb, 255:red, 74; green, 144; blue, 226 }  ,draw opacity=1 ][line width=0.75]    (17.64,-3.29) .. controls (13.66,-1.4) and (10.02,-0.3) .. (6.71,0) .. controls (10.02,0.3) and (13.66,1.4) .. (17.64,3.29)(10.93,-3.29) .. controls (6.95,-1.4) and (3.31,-0.3) .. (0,0) .. controls (3.31,0.3) and (6.95,1.4) .. (10.93,3.29)   ;

\draw [color={rgb, 255:red, 74; green, 144; blue, 226 }  ,draw opacity=1 ]   (146.54,158.59) -- (128.55,158.75) ;
\draw [shift={(126.55,158.77)}, rotate = 359.49] [color={rgb, 255:red, 74; green, 144; blue, 226 }  ,draw opacity=1 ][line width=0.75]    (10.93,-3.29) .. controls (6.95,-1.4) and (3.31,-0.3) .. (0,0) .. controls (3.31,0.3) and (6.95,1.4) .. (10.93,3.29)   ;
\draw [color={rgb, 255:red, 74; green, 144; blue, 226 }  ,draw opacity=1 ]   (146.54,158.59) -- (146.37,138.59) ;
\draw [shift={(146.37,138.59)}, rotate = 89.49] [color={rgb, 255:red, 74; green, 144; blue, 226 }  ,draw opacity=1 ][line width=0.75]    (17.64,-3.29) .. controls (13.66,-1.4) and (10.02,-0.3) .. (6.71,0) .. controls (10.02,0.3) and (13.66,1.4) .. (17.64,3.29)(10.93,-3.29) .. controls (6.95,-1.4) and (3.31,-0.3) .. (0,0) .. controls (3.31,0.3) and (6.95,1.4) .. (10.93,3.29)   ;

\end{tikzpicture}
\end{adjustbox}

\end{center}

The proof begins with the following folding lemma. We use $\Lambda_e$ for
the framed link of the coupon labeled by $e$, with the polarization
specified by its complete source and target configurations.

\begin{lemma}[Framed folding]\label{lem:mates}
Let $\cB_{<r}$ be the lower-depth structured bordism category, with coupon
functor $j_{<r}:P_{<r}\to U\cB_{<r}$. Let
$Z:\cB_{<r}\to\cD$ be symmetric monoidal, where
$\cD\in\SMCat_n^{\fd}$. Then the chosen folded link gives a natural
equivalence
\[
 \rMap_{\Omega^{r-1}\cD}\bigl(\one,Z(\Lambda_e)\bigr)
 \simeq
 \rMap_{\partial Zs_e\to\cD}(Zs_e,Zt_e).
\]
Here $Zs_e$ abbreviates $Z(j_{<r}s_e)$, and similarly for $t_e$.
The right-hand side denotes the space of $r$-cells with the specified
entire boundary. Mapping spaces in iterated loop categories mean the
cores of the corresponding hom-categories.
\end{lemma}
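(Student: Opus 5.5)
The plan is to prove the equivalence by an iterated adjunction (mate) argument that unfolds the link $\Lambda_e$ one codimension at a time. The link $\Lambda_e$ is the framed $(r-1)$-sphere bounding the normal disk of the coupon, stratified by the lower-depth defects incident to it. The chosen polarization cuts this sphere into a lower hemisphere carrying the source configuration $s_e$ and an upper hemisphere carrying $t_e$. The two hemispheres meet along an equatorial $(r-2)$-sphere carrying $\partial s_e=\partial t_e$.

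\textbf{Step 1 (reading $Z(\Lambda_e)$ as a closed cell).} Because $Z$ is symmetric monoidal and $\cD$ has all $n$-duals, $Z(\Lambda_e)$ is an object of $\Omega^{r-1}\cD$. The first step identifies it with a composite in $\Omega^{r-1}\cD$: the evaluation-type bend applied to $Zt_e$ and to the dual, or adjoint, of $Zs_e$. This is done using the framed bends of $\cB_{<r}$, that is, the cups and caps supplied by the framing. Concretely, I would factor $\Lambda_e$ in $\cB_{<r}$ as a composite of three pieces: the hemisphere bordism $j_{<r}t_e$, the reversed hemisphere $\overline{j_{<r}s_e}$, and the collar identifications along the equator.

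\textbf{Step 2 (the inductive mate argument).} The core of the proof is an induction on $r$. The base case is $r=1$. There $\Lambda_e$ is a pair of oppositely framed points, and the claim reduces to
\[
\rMap(\one,Zs_e^\vee\otimes Zt_e)\simeq\rMap(Zs_e,Zt_e),
\]
which is the standard tensor duality equivalence. For the inductive step, I peel off one normal direction at a time:
\[
\rMap_{\Omega^{k}\cD}(\one,X^{L}\circ Y)\simeq\rMap_{\Omega^{k}\cD}(X,Y),
\]
where $X^{L}$ is a left (or right) adjoint of the $k$-morphism $X$, and the framing vector $e_{k+1}$ decides which adjoint is used. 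This is applied with $X$ and $Y$ the partially folded source and target. Each application is the mate correspondence for the adjunction, given by composing with its unit and counit. The adjunction data are supplied by $\cD^{\fd}$, and the triangle identities make the map an equivalence. After $r$ steps the left side has become the space of $r$-cells from $Zs_e$ to $Zt_e$, with the boundary held fixed at $Z(\partial s_e)$. This is exactly $\rMap_{\partial Zs_e\to\cD}(Zs_e,Zt_e)$.

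\textbf{Step 3 (naturality and boundary bookkeeping).} The equivalences must be natural in $Z$, so that they glue over the fibre in \eqref{eq:bg-relative-ch}. Naturality holds because the units and counits are images under $Z$ of fixed bordisms in $\cB_{<r}$: the framed bends of the chosen folded link. Each step is therefore a composite with $Z$ of a fixed operation in $\cB_{<r}$, which is natural in $Z$. One must also check that the boundary is fixed by the whole identification. Since every bend happens inside collars where the structure is a product, the boundary data are carried along unchanged.

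\textbf{Main obstacle.} The hard part is Step 2's matching of framings with adjoints. The folded link is only determined up to the choices of bends, and left and right adjoints are not identified. One has to show that the polarization fixed by $(e_1,\ldots,e_r)$ selects a consistent side at each level. One also has to show that different folding orders give homotopic equivalences; the framing of the link must kill the possible ambiguity, a kind of Serre twist. My first attempt would be to reduce to the universal case $\cD=\cB_{<r}$ itself. There the statement becomes a framed isotopy between two foldings of the sphere $\Lambda_e$. I would then use the contractibility of the space of collared hemisphere decompositions compatible with the given normal framing.
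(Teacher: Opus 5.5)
Your proposal is correct and follows essentially the paper's own argument: the case $r=1$ via coevaluation, evaluation and the triangle identities, then successive adjunction (mate) equivalences in the relevant hom-categories. These bend the boundary data across the mapping expression in the order fixed by the polarization, with naturality because every step is built from $Z$-images of fixed framed bends. Two small remarks: first, the independence of the folding order, which you single out as the main obstacle, is not required, since the lemma is stated for the chosen folded link; second, for $k\geq1$ your displayed identity is mis-indexed, because after partial folding $X$ and $Y$ are parallel $1$-morphisms $\one\to W$ in $\Omega^{k-1}\cD$, so its right-hand side should be the mapping space between them in the hom-category from $\one$ to $W$, not $\rMap_{\Omega^{k}\cD}(X,Y)$.
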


\begin{proof}
For $r=1$, tensoring with coevaluation gives
\[
 \rMap_{\cD}(x,y)
 \simeq\rMap_{\cD}(\one,x^\vee\otimes y),
\]
and evaluation gives its inverse. The triangle identities supply the
inverse homotopies. In higher dimensions, bend the boundary faces in the
order specified by the polarized coupon. At each step, the adjunction
equivalence in the relevant hom-category moves one boundary face across
the mapping expression. After all faces are bent, the source is the
iterated tensor unit and the target is the folded link. Each step is
natural in the lower labeling, so their composite proves the assertion.
\end{proof}

We use the following resolved-cone form of the cobordism hypothesis with
singularities as a theorem input.

\begin{theorem}[{\cite[Theorem 4.3.11]{Lurie}}]\label{thm:chs}
Let $\cB_{\leq r-1}$ be a framed singular bordism category. Adjoin finitely
many labeled cones on closed structured links $\Lambda_e$, with stratum
tangential bundle $\underline{\bR}^{\,n-r}$, and let $\cB_{\leq r}$ be the
resulting resolved bordism category. Both categories have all $n$-duals.
For $\cD\in\SMCat_n^{\fd}$ and $Z:\cB_{\leq r-1}\to\cD$,
\[
 \hofib_Z\!\left[
 \rFun^\otimes(\cB_{\leq r},\cD)
 \longrightarrow\rFun^\otimes(\cB_{\leq r-1},\cD)
 \right]
 \simeq
 \prod_e\rMap_{\Omega^{r-1}\cD}
       \bigl(\one,Z(\Lambda_e)\bigr).
\]
\end{theorem}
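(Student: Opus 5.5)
Theorem~\ref{thm:chs} is quoted from \cite[Theorem 4.3.11]{Lurie}, so the plan is to reduce it to Lurie's statement. We identify the resolved-cone category $\cB_{\leq r}$ with Lurie's bordism category with singularities of type given by the links $\Lambda_e$. After that, the displayed fiber formula is Lurie's theorem read one depth at a time.

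\textbf{Step 1: the hypotheses.} Each new label $e$ is a closed $(r-1)$-dimensional link $\Lambda_e$ inside $\cB_{\leq r-1}$. The cone stratum carries the residual framing $\underline{\bR}^{\,n-r}$. This is exactly Lurie's setting of a structured singularity with trivial stratum tangential bundle.

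We then check that $\cB_{\leq r}$ is obtained from $\cB_{\leq r-1}$ as a pushout in $\SMCat_n^{\fd}$. Concretely, we adjoin free $n$-dualizable cells along the maps $\one\to\Lambda_e$ in $\Omega^{r-1}\cB_{\leq r-1}$. Geometrically, a neighborhood of a new stratum is $\bR^{n-r}\times\mathring C(\Lambda_e)$, and this defines a morphism in $\Omega^{r-1}\cB_{\leq r}$ from the empty $(r-1)$-manifold to $\Lambda_e$.

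\textbf{Step 2: the fiber computation.} Applying $\rFun^\otimes(-,\cD)$ to the pushout gives a pullback. Its fiber over $Z$ is the product over $e$ of the spaces of extensions along $\one\to\Lambda_e$. Each factor is $\rMap_{\Omega^{r-1}\cD}(\one,Z(\Lambda_e))$.

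Two inputs make this work. First, every $Z$ is determined by its restriction to $\cB_{\leq r-1}$ plus the values on the new generators. Second, since $\cD$ has all $n$-duals, the adjoints and duals of the new generators are forced, with contractible choice, by the ordinary cobordism hypothesis (\ref{eq:bg-ch}) applied in the relevant framed degree. Finiteness of the set of labels lets the fibers be taken labelwise.

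\textbf{Step 3 (main obstacle): the geometric presentation.} The hard part is the statement that adjoining the cones geometrically equals the free categorical cell attachment. This requires the space of framed resolved-cone neighborhoods to be contractible relative to the link data, and collar and isotopy data on punctured tubes to match Lurie's conventions.

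The approach here is to use the tubular-germ compatibilities built into Definition~\ref{def:Ostr}, together with a handle-decomposition (Morse-theoretic) argument filtering bordisms by the number of singular strata. This is exactly where Lurie's sketch is invoked. We therefore treat it as the cited input rather than reproving it, and record only that our resolved-cone data match his normal-structure data.
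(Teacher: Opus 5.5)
Your proposal takes essentially the paper's route. The paper gives no independent argument: it imports \cite[Theorem 4.3.11]{Lurie} as a theorem input and, as your Step 3 does, defers the comparison between the resolved-cone data and Lurie's singularity data, while your Step 2 is the standard unpacking of the free-cell pushout into the displayed fiber product. One small correction to Step 2: the contractible choice of duals and adjoints for the new generators is formal, since functors into $\cD\in\SMCat_n^{\fd}$ preserve them and adjoints are unique up to contractible choice, so it does not require the ordinary cobordism hypothesis.
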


\begin{proposition}[Geometric cell attachment]\label{prop:Er}
For the resolved-cone model of Definition~\ref{def:Ostr}, let
$\cD\in\SMCat_n^{\fd}$ and fix an evaluation $Z$ of the cells of dimension
less than $r$. Then
\begin{equation}\label{eq:Er}
\begin{aligned}
 &\hofib_Z\!\left[
 \rFun^\otimes(\Bord_n^{O_{\leq r}},\cD)
 \longrightarrow
 \rFun^\otimes(\Bord_n^{O_{\leq r-1}},\cD)
 \right]\\
 &\hspace{20mm}\simeq
 \prod_{e\in O_{(r)}}
 \rMap_{\partial Zs_e\to\cD}(Zs_e,Zt_e),
 \qquad 1\leq r\leq n.
\end{aligned}
\end{equation}
\end{proposition}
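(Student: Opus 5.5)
The plan is to combine the cobordism hypothesis with singularities (Theorem~\ref{thm:chs}) with the framed folding equivalence of Lemma~\ref{lem:mates}. The only genuinely new input is a geometric comparison: passing from $\Bord_n^{O_{\leq r-1}}$ to $\Bord_n^{O_{\leq r}}$ must be exactly a resolved-cone attachment in the sense of Theorem~\ref{thm:chs}.

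\textbf{Step 1: identify the new strata.} First I would show that the new strata admitted in $\Bord_n^{O_{\leq r}}$ are the codimension-$r$ strata labeled by the $r$-cells $e\in O_{(r)}$. Each such stratum has normal model $PD(O_e)\cong\mathring C(\Lambda(O_e))$ by Definition~\ref{def:Ostr}, and the link $\Lambda_e:=\Lambda(O_e)$ is a closed framed stratified $(r-1)$-manifold. All strata of $\Lambda_e$ carry labels from cells of dimension $<r$, so $\Lambda_e$ is an object, or closed morphism, of $\Bord_n^{O_{\leq r-1}}$ in the appropriate degree. The residual framing $\tau_t$ trivializes the stratum tangent bundle as $\underline{\bR}^{\,n-r}$, which is the hypothesis required in Theorem~\ref{thm:chs}.

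\textbf{Step 2: check the remaining hypotheses.} Both categories have all $n$-duals, since bends and cups of framed collared bordisms exist stratum-wise. The compatibility of tubular germs on punctured tubes and iterated incidence charts says precisely that the resolved-cone datum is the one used by Lurie. With this in place, Theorem~\ref{thm:chs} gives
\[
\hofib_Z\simeq\prod_{e\in O_{(r)}}\rMap_{\Omega^{r-1}\cD}\bigl(\one,Z(\Lambda_e)\bigr).
\]

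\textbf{Step 3: unfold the link.} Next I would apply Lemma~\ref{lem:mates} factorwise, with $\cB_{<r}=\Bord_n^{O_{\leq r-1}}$. Here the coupon functor $j_{<r}$ sends the faces $s_e,t_e$ of $e$ to their Poincar\'e-dual germs, and the chosen folded link is $\Lambda_e$ with the polarization induced by the input/output splitting of $O_e$. Naturality of the folding in the lower labeling makes the product of equivalences compatible with the fiber over $Z$. This yields \eqref{eq:Er}.

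\textbf{Main obstacle.} I expect the hardest step to be the geometric comparison in Step 1. One must show that the model of Definition~\ref{def:Ostr}, with its normal maps, neat corner conditions and completed spaces of families, is equivalent as a symmetric monoidal $(\infty,n)$-category to Lurie's resolved cone attachment along $\Lambda_e$. In particular, the space of framed embeddings of the cone germ and its automorphisms must match. My first approach would use the normal-map presentation. Transversality together with frame compatibility shows that the space of admissible germs near a codimension-$r$ stratum is contractible onto the product model $PD(O_e)\times W_t$. Choosing a radial collar then identifies the complement of the open tube with a $\Bord_n^{O_{\leq r-1}}$-bordism whose boundary is $\Lambda_e\times\partial$-collar. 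This reduces the comparison to the singular bordism categories as Lurie defines them, so the remaining work is checking that the frame projection $w_j$ matches Lurie's residual framing convention.
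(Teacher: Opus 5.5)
Your proposal matches the paper's proof: you identify the new codimension-$r$ normal models with the cones on $\Lambda_e$, apply Theorem~\ref{thm:chs}, then apply Lemma~\ref{lem:mates} to each factor, using naturality in $Z$. The geometric comparison you flag as the main obstacle is not proved in the paper either; it is built into the hypothesis that one works with the resolved-cone model of Definition~\ref{def:Ostr}, so your normal-map contractibility sketch goes beyond what the paper's argument uses.
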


\begin{proof}
The chosen normal models are the cones on $\Lambda_e$ appearing in
Theorem~\ref{thm:chs}. Apply that theorem to the specified resolved-cone
data, then apply Lemma~\ref{lem:mates} to each factor. Both equivalences
are natural in $Z$.
\end{proof}

\begin{theorem}[Opetopic Cobordism Hypothesis]\label{thm:opetope}
Let $O$ be an equipped opetope of dimension at most $n$. Under the
resolved-cone hypotheses above, restriction induces a natural equivalence
\[
 \rFun^\otimes(\Bord_n^O,\cD)
 \simeq
 \rFun_{\Cat_n}([O],\cD^{\fd})
\]
for every $\cD\in\SMCat_n$. The generating coupons define
\[
 j_O:[O]\longrightarrow U\Bord_n^O.
\]
\end{theorem}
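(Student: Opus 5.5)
The proof is an induction on the cell filtration $O_{\leq 0}\subset O_{\leq 1}\subset\cdots\subset O_{\leq m}=O$, comparing two towers of restriction maps. On the geometric side there is the tower
\[
 \rFun^\otimes(\Bord_n^{O_{\leq r}},\cD)\to\rFun^\otimes(\Bord_n^{O_{\leq r-1}},\cD).
\]
On the categorical side there is the tower
\[
 \rFun([O_{\leq r}],\cD^{\fd})\to\rFun([O_{\leq r-1}],\cD^{\fd}).
\]
The coupon functors $j_{O_{\leq r}}$ give compatible restriction maps from the first tower to the second, and the aim is to show that these are equivalences at every stage.

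\textbf{Reduction and base case.} First reduce to $\cD\in\SMCat_n^{\fd}$. Each $\Bord_n^{O_{\leq r}}$ has all $n$-duals, since its objects and bordisms are framed and the duals are supplied by bends exactly as in $\Bord_n^{\fr}$. Hence every symmetric monoidal functor out of it lands in $\cD^{\fd}$, by the maximality and repleteness of $\cD^{\fd}$. For $r=0$, $O_{\leq 0}$ is a finite discrete set of objects. Then $\Bord_n^{O_{\leq 0}}$ is the coproduct of copies of $\Bord_n^{\fr}$, one for each $0$-cell: the codimension-$0$ regions are labelled by the objects, and there are no walls yet. The framed cobordism hypothesis \eqref{eq:bg-ch} then gives $\prod_{x\in O_{(0)}}(\cD^{\fd})^{\cong}$, which is $\rFun([O_{\leq 0}],\cD^{\fd})$.

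\textbf{Inductive step.} Assume the equivalence holds at stage $r-1$, and take fibers of both towers over a fixed $Z$. Proposition~\ref{prop:Er} identifies the geometric fiber with $\prod_{e\in O_{(r)}}\rMap_{\partial Zs_e\to\cD}(Zs_e,Zt_e)$. On the categorical side, $[O_{\leq r}]$ is obtained from $[O_{\leq r-1}]$ by a pushout that attaches one $r$-cell along its boundary for each $e\in O_{(r)}$; this is where the equipped realization and its substitution data enter. Therefore the fiber of $\rFun([O_{\leq r}],\cD^{\fd})\to\rFun([O_{\leq r-1}],\cD^{\fd})$ over $Z\circ j$ is the same product of spaces of $r$-cells with prescribed boundary.

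It remains to show that the comparison map induces exactly this identification of fibers. The cell of $[O]$ labelled $e$ is sent by $j_O$ to the coupon on $\Lambda_e$, and the folding equivalence of Lemma~\ref{lem:mates} is precisely evaluation on that coupon with its boundary faces unbent. By the inductive hypothesis the base maps are equivalences, and the fibers match, so the total maps are equivalences. Naturality in $\cD$ follows because every map used is a restriction.

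\textbf{Main obstacle.} The hardest step is identifying the categorical attachment with the geometric one. There are two points to check. First, the boundary $\partial y(O_e)$ in the realization must agree with the folded link data, including the polarization and the order in which faces are bent. Second, the opetopic pushout must be a genuine pushout in $\Cat_n$, which requires the density and realization assumptions on the chosen model. I would first verify this for $r=1,2$ by hand, with the binary $2$-opetope as the model case, and then argue in general using the compatible normal germs on punctured tubes. These guarantee that the boundary of the coupon for $e$ is the image under $j_{O_{\leq r-1}}$ of $\partial[O_e]$.

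Finally, the case $m=n$ needs no separate argument. Top-dimensional cells are attached with $r=n$, which lies in the range $1\leq r\leq n$ covered by Proposition~\ref{prop:Er}. No universality condition is imposed on them, so their fibers are full mapping spaces.
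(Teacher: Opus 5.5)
Your proposal follows the paper's proof. Both arguments reduce to $\cD^{\fd}$, start from $\Bord_n^{O_{\leq0}}\simeq\coprod_{e\in O_{(0)}}\Bord_n^{\fr}$ and the framed cobordism hypothesis, and then match the geometric fibers from Proposition~\ref{prop:Er} with the fibers of restriction supplied by the coherent cellular realization of $[O]$. Your additional check, that restriction along $j_O$ actually induces this fiber identification via Lemma~\ref{lem:mates}, makes explicit a compatibility the paper leaves implicit: its proof only asserts that the base and the fibers are abstractly equivalent.
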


\begin{proof}
At degree zero,
\[
 \Bord_n^{O_{\leq0}}
 \simeq\coprod_{e\in O_{(0)}}\Bord_n^{\fr},
\]
so the assertion is the ordinary framed CH
\cite[Theorem 2.4.6]{Lurie}. Inductively, Proposition~\ref{prop:Er}
identifies the space of geometric extensions over a fixed lower labeling
with the space of choices for its generating $r$-cells. This is also the
fiber of restriction from $O$-diagrams to their $(r-1)$-skeleta, by the
coherent cellular realization of $[O]$. The base and every fiber are
therefore equivalent, proving the induction. For arbitrary $\cD$, every
functor from the source factors through $\cD^{\fd}$. The universal
generating diagram gives $j_O$.
\end{proof}

For general coefficients, we also need opetopes with universal markings.
In the representable coherent model, a many-input cell is universal when
its unary comparison from a chosen composite is invertible. This does not
require its input cells to be invertible
\cite{BaezDolan,ChengUniversality}.

Let $u$ be a set of positive-dimensional cells of $O$. Write
$[O][u^{-1}]$ for the localization at their unary comparison cells. Thus
\[
\begin{aligned}
 \rFun_{\Cat_n}([O][u^{-1}],\cC)
 \simeq\bigl\{D\in\rFun_{\Cat_n}([O],\cC):
 D(e)\text{ is universal for every }e\in u\bigr\}.
\end{aligned}
\]

To describe a proposed geometric marking, equip $PD_u(O)$ with partial
normal frames. Along a chosen resolved incidence flag, write
$NF_u(t(S))$ for the normal directions inherited from the target $t(S)$,
and let $e_S$ be
the cell corresponding to a codimension-$r$ stratum $S$. Set
\[
 NF_u(S)=
 \begin{cases}
 NF_u(t(S)),&e_S\in u,\\
 NF_u(t(S))\sqcup\{e_r\},&e_S\notin u.
 \end{cases}
\]
Require agreement between the chosen incidence charts. Here $\partial S$
denotes the inherited categorical boundary data, not merely the topological
boundary of $S$. Let
\[
 N_uS=\operatorname{Span}(NF_u(S)),
\]
and write $q_{u,S}$ for the projection to this retained part of the normal
bundle, using the original normal framing.

\begin{definition}[Manifold with $(O,u)$-stratification]\label{def:Oustr}
Let $(M,\tau)$ be a $k$-dimensional manifold as in
Definition~\ref{def:Ostr}. An
\emph{$(O,u)$-normal map} is a smooth germ
\[
 F:\widehat M\longrightarrow PD_u(O)
\]
satisfying, for every chosen core face $Q$ and every $x\in Q\cap F^{-1}(S)$:
\begin{enumerate}
  \item \emph{Partial transversality:}
  \[
  T_xQ\xr{d(F|_Q)_x}T_{F(x)}\bR^m
  \xr{q_{u,S}}N_{u,F(x)}S
  \quad\text{is surjective};
  \]
  \item \emph{Partial frame compatibility:}
  \[
  q_{u,S}\bigl(dF(v_a)\bigr)=e_a
  \qquad\text{for every retained }e_a\in NF_u(S).
  \]
\end{enumerate}
All boundary data are compatible on product collars.

Partial transversality does not imply that the full inverse images of
marked strata are smooth. We therefore use \emph{$(O,u)$-stratification}
for the resulting normal-map field; a resolved stratified presentation,
when available, includes compatible normal germs and their corner data.
If $d=|NF_u(S)|$, the retained normal derivative has a kernel of rank $n-d$.

An \emph{resolved $(O,u)$-stratification} is a locally finite stratification
$\mathcal W=\{W_t\}$ with label maps
\[
 l:\mathcal W^{(r)}\longrightarrow PD^{(r)}(O)
\]
and the following compatible data for every codimension-$r$ stratum $W_t$:
\begin{enumerate}
  \item a residual $(n-d)$-framing
  \[
  \tau_{u,t}:TW_t\oplus\underline{\bR}^{\,n-k}\oplus\underline{\bR}^{\,r-d}
  \xrightarrow{\cong}\underline{\bR}^{\,n-d};
  \]
  \item a specified stratified tubular germ
  \[
  N(W_t)\cong PD(O_{l(W_t)})\times W_t,
  \]
  compatible with the ambient framing through the normal splitting
  $(NF_u(l(W_t)),\tau_{u,t})$; these germs agree on punctured tubes and
  iterated incidence charts;
  \item neat intersections with every chosen boundary or corner face $Q$,
  of dimension $\dim Q-r$, with empty intersection if this is negative.
  All structures are products on the corresponding collars.
\end{enumerate}

The proposed $\Bord_n^{(O,u)}$ is the completed symmetric monoidal
$(\infty,n)$-category of these fields on framed bordisms, with the
specified marked passage relations. For $m>n$, higher comparison data
are parameter families on $[0,1]^{m-n}\times M$, rather than new
noninvertible categorical degrees. The intended parameter directions are
reversible: admissibility under their changes of framing must be supplied
with coherent reversal and cancellation homotopies.
\end{definition}

\begin{center}

\tikzset{every picture/.style={line width=0.75pt}} 

\begin{adjustbox}{max totalsize={\linewidth}{0.86\textheight}}
\begin{tikzpicture}[x=0.75pt,y=0.75pt,yscale=-1,xscale=1]

\draw  [color={rgb, 255:red, 0; green, 0; blue, 0 }  ,draw opacity=1 ][fill={rgb, 255:red, 245; green, 166; blue, 35 }  ,fill opacity=1 ] (100,117) -- (239.2,117) -- (239.2,192) -- (100,192) -- cycle ;
\draw  [draw opacity=0] (206.86,117.43) .. controls (206.63,137.81) and (190.04,154.27) .. (169.6,154.27) .. controls (149.15,154.27) and (132.54,137.79) .. (132.34,117.39) -- (169.6,117) -- cycle ; \draw   (206.86,117.43) .. controls (206.63,137.81) and (190.04,154.27) .. (169.6,154.27) .. controls (149.15,154.27) and (132.54,137.79) .. (132.34,117.39) ;  
\draw [color={rgb, 255:red, 74; green, 144; blue, 226 }  ,draw opacity=1 ]   (132.59,117.25) -- (150.58,116.94) ;
\draw [shift={(152.58,116.91)}, rotate = 179.02] [color={rgb, 255:red, 74; green, 144; blue, 226 }  ,draw opacity=1 ][line width=0.75]    (10.93,-3.29) .. controls (6.95,-1.4) and (3.31,-0.3) .. (0,0) .. controls (3.31,0.3) and (6.95,1.4) .. (10.93,3.29)   ;
\draw [color={rgb, 255:red, 74; green, 144; blue, 226 }  ,draw opacity=1 ]   (132.59,117.25) -- (132.93,137.25) ;
\draw [shift={(132.93,137.25)}, rotate = 269.02] [color={rgb, 255:red, 74; green, 144; blue, 226 }  ,draw opacity=1 ][line width=0.75]    (17.64,-3.29) .. controls (13.66,-1.4) and (10.02,-0.3) .. (6.71,0) .. controls (10.02,0.3) and (13.66,1.4) .. (17.64,3.29)(10.93,-3.29) .. controls (6.95,-1.4) and (3.31,-0.3) .. (0,0) .. controls (3.31,0.3) and (6.95,1.4) .. (10.93,3.29)   ;

\draw [color={rgb, 255:red, 74; green, 144; blue, 226 }  ,draw opacity=1 ]   (151.31,150.43) -- (161.74,135.76) ;
\draw [shift={(162.9,134.13)}, rotate = 125.42] [color={rgb, 255:red, 74; green, 144; blue, 226 }  ,draw opacity=1 ][line width=0.75]    (10.93,-3.29) .. controls (6.95,-1.4) and (3.31,-0.3) .. (0,0) .. controls (3.31,0.3) and (6.95,1.4) .. (10.93,3.29)   ;
\draw [color={rgb, 255:red, 74; green, 144; blue, 226 }  ,draw opacity=1 ]   (151.31,150.43) -- (167.61,162.02) ;
\draw [shift={(167.61,162.02)}, rotate = 215.42] [color={rgb, 255:red, 74; green, 144; blue, 226 }  ,draw opacity=1 ][line width=0.75]    (17.64,-3.29) .. controls (13.66,-1.4) and (10.02,-0.3) .. (6.71,0) .. controls (10.02,0.3) and (13.66,1.4) .. (17.64,3.29)(10.93,-3.29) .. controls (6.95,-1.4) and (3.31,-0.3) .. (0,0) .. controls (3.31,0.3) and (6.95,1.4) .. (10.93,3.29)   ;

\draw [color={rgb, 255:red, 74; green, 144; blue, 226 }  ,draw opacity=1 ]   (188.79,150.64) -- (177.45,136.66) ;
\draw [shift={(176.19,135.11)}, rotate = 50.96] [color={rgb, 255:red, 74; green, 144; blue, 226 }  ,draw opacity=1 ][line width=0.75]    (10.93,-3.29) .. controls (6.95,-1.4) and (3.31,-0.3) .. (0,0) .. controls (3.31,0.3) and (6.95,1.4) .. (10.93,3.29)   ;
\draw [color={rgb, 255:red, 74; green, 144; blue, 226 }  ,draw opacity=1 ]   (188.79,150.64) -- (204.32,138.05) ;
\draw [shift={(204.32,138.05)}, rotate = 140.96] [color={rgb, 255:red, 74; green, 144; blue, 226 }  ,draw opacity=1 ][line width=0.75]    (17.64,-3.29) .. controls (13.66,-1.4) and (10.02,-0.3) .. (6.71,0) .. controls (10.02,0.3) and (13.66,1.4) .. (17.64,3.29)(10.93,-3.29) .. controls (6.95,-1.4) and (3.31,-0.3) .. (0,0) .. controls (3.31,0.3) and (6.95,1.4) .. (10.93,3.29)   ;

\draw [color={rgb, 255:red, 74; green, 144; blue, 226 }  ,draw opacity=1 ]   (206.84,117.49) -- (188.85,117.65) ;
\draw [shift={(186.85,117.67)}, rotate = 359.49] [color={rgb, 255:red, 74; green, 144; blue, 226 }  ,draw opacity=1 ][line width=0.75]    (10.93,-3.29) .. controls (6.95,-1.4) and (3.31,-0.3) .. (0,0) .. controls (3.31,0.3) and (6.95,1.4) .. (10.93,3.29)   ;
\draw [color={rgb, 255:red, 74; green, 144; blue, 226 }  ,draw opacity=1 ]   (206.84,117.49) -- (206.67,97.49) ;
\draw [shift={(206.67,97.49)}, rotate = 89.49] [color={rgb, 255:red, 74; green, 144; blue, 226 }  ,draw opacity=1 ][line width=0.75]    (17.64,-3.29) .. controls (13.66,-1.4) and (10.02,-0.3) .. (6.71,0) .. controls (10.02,0.3) and (13.66,1.4) .. (17.64,3.29)(10.93,-3.29) .. controls (6.95,-1.4) and (3.31,-0.3) .. (0,0) .. controls (3.31,0.3) and (6.95,1.4) .. (10.93,3.29)   ;

\draw  [color={rgb, 255:red, 0; green, 0; blue, 0 }  ,draw opacity=1 ][fill={rgb, 255:red, 245; green, 166; blue, 35 }  ,fill opacity=1 ] (275,116.5) -- (414.2,116.5) -- (414.2,191.5) -- (275,191.5) -- cycle ;
\draw  [draw opacity=0] (381.86,116.93) .. controls (381.63,137.31) and (365.04,153.77) .. (344.6,153.77) .. controls (324.15,153.77) and (307.54,137.29) .. (307.34,116.89) -- (344.6,116.5) -- cycle ; \draw  [color={rgb, 255:red, 208; green, 2; blue, 27 }  ,draw opacity=1 ] (381.86,116.93) .. controls (381.63,137.31) and (365.04,153.77) .. (344.6,153.77) .. controls (324.15,153.77) and (307.54,137.29) .. (307.34,116.89) ;  
\draw [color={rgb, 255:red, 126; green, 211; blue, 33 }  ,draw opacity=1 ]   (307.09,116.75) -- (325.08,116.44) ;
\draw [shift={(327.08,116.41)}, rotate = 179.02] [color={rgb, 255:red, 126; green, 211; blue, 33 }  ,draw opacity=1 ][line width=0.75]    (10.93,-3.29) .. controls (6.95,-1.4) and (3.31,-0.3) .. (0,0) .. controls (3.31,0.3) and (6.95,1.4) .. (10.93,3.29)   ;
\draw [color={rgb, 255:red, 126; green, 211; blue, 33 }  ,draw opacity=1 ]   (307.09,116.75) -- (307.43,136.75) ;
\draw [shift={(307.43,136.75)}, rotate = 269.02] [color={rgb, 255:red, 126; green, 211; blue, 33 }  ,draw opacity=1 ][line width=0.75]    (17.64,-3.29) .. controls (13.66,-1.4) and (10.02,-0.3) .. (6.71,0) .. controls (10.02,0.3) and (13.66,1.4) .. (17.64,3.29)(10.93,-3.29) .. controls (6.95,-1.4) and (3.31,-0.3) .. (0,0) .. controls (3.31,0.3) and (6.95,1.4) .. (10.93,3.29)   ;

\draw [color={rgb, 255:red, 126; green, 211; blue, 33 }  ,draw opacity=1 ]   (318.66,143.67) -- (336.66,143.5) ;
\draw [shift={(338.66,143.48)}, rotate = 179.46] [color={rgb, 255:red, 126; green, 211; blue, 33 }  ,draw opacity=1 ][line width=0.75]    (10.93,-3.29) .. controls (6.95,-1.4) and (3.31,-0.3) .. (0,0) .. controls (3.31,0.3) and (6.95,1.4) .. (10.93,3.29)   ;
\draw [color={rgb, 255:red, 126; green, 211; blue, 33 }  ,draw opacity=1 ]   (318.66,143.67) -- (318.85,163.67) ;
\draw [shift={(318.85,163.67)}, rotate = 269.46] [color={rgb, 255:red, 126; green, 211; blue, 33 }  ,draw opacity=1 ][line width=0.75]    (17.64,-3.29) .. controls (13.66,-1.4) and (10.02,-0.3) .. (6.71,0) .. controls (10.02,0.3) and (13.66,1.4) .. (17.64,3.29)(10.93,-3.29) .. controls (6.95,-1.4) and (3.31,-0.3) .. (0,0) .. controls (3.31,0.3) and (6.95,1.4) .. (10.93,3.29)   ;

\draw [color={rgb, 255:red, 126; green, 211; blue, 33 }  ,draw opacity=1 ]   (360.77,148.56) -- (378.77,148.59) ;
\draw [shift={(380.77,148.59)}, rotate = 180.1] [color={rgb, 255:red, 126; green, 211; blue, 33 }  ,draw opacity=1 ][line width=0.75]    (10.93,-3.29) .. controls (6.95,-1.4) and (3.31,-0.3) .. (0,0) .. controls (3.31,0.3) and (6.95,1.4) .. (10.93,3.29)   ;
\draw [color={rgb, 255:red, 126; green, 211; blue, 33 }  ,draw opacity=1 ]   (360.77,148.56) -- (360.74,168.56) ;
\draw [shift={(360.74,168.56)}, rotate = 270.1] [color={rgb, 255:red, 126; green, 211; blue, 33 }  ,draw opacity=1 ][line width=0.75]    (17.64,-3.29) .. controls (13.66,-1.4) and (10.02,-0.3) .. (6.71,0) .. controls (10.02,0.3) and (13.66,1.4) .. (17.64,3.29)(10.93,-3.29) .. controls (6.95,-1.4) and (3.31,-0.3) .. (0,0) .. controls (3.31,0.3) and (6.95,1.4) .. (10.93,3.29)   ;

\draw [color={rgb, 255:red, 126; green, 211; blue, 33 }  ,draw opacity=1 ]   (382.1,117.23) -- (400.1,116.96) ;
\draw [shift={(402.1,116.92)}, rotate = 179.12] [color={rgb, 255:red, 126; green, 211; blue, 33 }  ,draw opacity=1 ][line width=0.75]    (10.93,-3.29) .. controls (6.95,-1.4) and (3.31,-0.3) .. (0,0) .. controls (3.31,0.3) and (6.95,1.4) .. (10.93,3.29)   ;
\draw [color={rgb, 255:red, 126; green, 211; blue, 33 }  ,draw opacity=1 ]   (382.1,117.23) -- (382.41,137.23) ;
\draw [shift={(382.41,137.23)}, rotate = 269.12] [color={rgb, 255:red, 126; green, 211; blue, 33 }  ,draw opacity=1 ][line width=0.75]    (17.64,-3.29) .. controls (13.66,-1.4) and (10.02,-0.3) .. (6.71,0) .. controls (10.02,0.3) and (13.66,1.4) .. (17.64,3.29)(10.93,-3.29) .. controls (6.95,-1.4) and (3.31,-0.3) .. (0,0) .. controls (3.31,0.3) and (6.95,1.4) .. (10.93,3.29)   ;

\draw (116,91.4) node [anchor=north west][inner sep=0.75pt]    {$c\notin u$};
\draw (291,90.9) node [anchor=north west][inner sep=0.75pt]    {$c\in u$};
\draw (380.5,94.9) node [anchor=north west][inner sep=0.75pt]    {$c^{-1}$};

\end{tikzpicture}
\end{adjustbox}

\end{center}

It is useful to distinguish the two notions of deformation:
\begin{itemize}
  \item For an unmarked $O$-stratification, an isotopy is a family
  $H:I\times\widehat M\to PD(O)$ for which every $H_t$ is an admissible
  $O$-normal map.
  \item For a marked field, the proposed passage families
  $H:I\times\widehat M\to PD_u(O)$ have admissible endpoint fields and
  satisfy the retained normal conditions for both orientations of the
  parameter interval, $I_+$ and $I_-$. Their cancellation and higher
  coherence are additional requirements on the marked presentation.
\end{itemize}

\begin{center}

\tikzset{every picture/.style={line width=0.75pt}} 

\begin{adjustbox}{max totalsize={\linewidth}{0.86\textheight}}
\begin{tikzpicture}[x=0.75pt,y=0.75pt,yscale=-1,xscale=1]

\draw  [draw opacity=0][fill={rgb, 255:red, 248; green, 231; blue, 28 }  ,fill opacity=1 ] (280,90) -- (360,90) -- (360,170) -- (280,170) -- cycle ;
\draw    (280,90) -- (320,130) ;
\draw    (320,130) -- (320,170) ;
\draw    (320,90) -- (320,130) ;
\draw    (320,130) -- (360,90) ;
\draw [color={rgb, 255:red, 74; green, 144; blue, 226 }  ,draw opacity=1 ]   (320,160) -- (338,160) ;
\draw [shift={(340,160)}, rotate = 180] [color={rgb, 255:red, 74; green, 144; blue, 226 }  ,draw opacity=1 ][line width=0.75]    (10.93,-3.29) .. controls (6.95,-1.4) and (3.31,-0.3) .. (0,0) .. controls (3.31,0.3) and (6.95,1.4) .. (10.93,3.29)   ;
\draw [color={rgb, 255:red, 74; green, 144; blue, 226 }  ,draw opacity=1 ]   (290,100) -- (308,100) ;
\draw [shift={(310,100)}, rotate = 180] [color={rgb, 255:red, 74; green, 144; blue, 226 }  ,draw opacity=1 ][line width=0.75]    (10.93,-3.29) .. controls (6.95,-1.4) and (3.31,-0.3) .. (0,0) .. controls (3.31,0.3) and (6.95,1.4) .. (10.93,3.29)   ;
\draw [color={rgb, 255:red, 74; green, 144; blue, 226 }  ,draw opacity=1 ]   (320,100) -- (338,100) ;
\draw [shift={(340,100)}, rotate = 180] [color={rgb, 255:red, 74; green, 144; blue, 226 }  ,draw opacity=1 ][line width=0.75]    (10.93,-3.29) .. controls (6.95,-1.4) and (3.31,-0.3) .. (0,0) .. controls (3.31,0.3) and (6.95,1.4) .. (10.93,3.29)   ;
\draw [color={rgb, 255:red, 74; green, 144; blue, 226 }  ,draw opacity=1 ]   (340,110) -- (358,110) ;
\draw [shift={(360,110)}, rotate = 180] [color={rgb, 255:red, 74; green, 144; blue, 226 }  ,draw opacity=1 ][line width=0.75]    (10.93,-3.29) .. controls (6.95,-1.4) and (3.31,-0.3) .. (0,0) .. controls (3.31,0.3) and (6.95,1.4) .. (10.93,3.29)   ;
\draw  [dash pattern={on 4.5pt off 4.5pt}]  (279,117) .. controls (319,87) and (319.64,147.42) .. (359.64,117.42) ;
\draw  [dash pattern={on 4.5pt off 4.5pt}]  (279,143) .. controls (310.64,141.42) and (319.64,173.42) .. (359.64,143.42) ;
\draw    (140.67,111.67) .. controls (180.67,81.67) and (181.31,142.09) .. (221.31,112.09) ;
\draw    (140.67,137.67) .. controls (172.31,136.09) and (181.31,168.09) .. (221.31,138.09) ;
\draw  [fill={rgb, 255:red, 0; green, 0; blue, 0 }  ,fill opacity=1 ] (157.25,102.82) .. controls (157.25,101.38) and (158.41,100.22) .. (159.85,100.22) .. controls (161.28,100.22) and (162.45,101.38) .. (162.45,102.82) .. controls (162.45,104.26) and (161.28,105.42) .. (159.85,105.42) .. controls (158.41,105.42) and (157.25,104.26) .. (157.25,102.82) -- cycle ;
\draw  [fill={rgb, 255:red, 0; green, 0; blue, 0 }  ,fill opacity=1 ] (178.58,112.15) .. controls (178.58,110.72) and (179.74,109.55) .. (181.18,109.55) .. controls (182.62,109.55) and (183.78,110.72) .. (183.78,112.15) .. controls (183.78,113.59) and (182.62,114.75) .. (181.18,114.75) .. controls (179.74,114.75) and (178.58,113.59) .. (178.58,112.15) -- cycle ;
\draw  [fill={rgb, 255:red, 0; green, 0; blue, 0 }  ,fill opacity=1 ] (178.91,149.82) .. controls (178.91,148.38) and (180.08,147.22) .. (181.51,147.22) .. controls (182.95,147.22) and (184.11,148.38) .. (184.11,149.82) .. controls (184.11,151.26) and (182.95,152.42) .. (181.51,152.42) .. controls (180.08,152.42) and (178.91,151.26) .. (178.91,149.82) -- cycle ;
\draw  [fill={rgb, 255:red, 0; green, 0; blue, 0 }  ,fill opacity=1 ] (187.25,116.82) .. controls (187.25,115.38) and (188.41,114.22) .. (189.85,114.22) .. controls (191.28,114.22) and (192.45,115.38) .. (192.45,116.82) .. controls (192.45,118.26) and (191.28,119.42) .. (189.85,119.42) .. controls (188.41,119.42) and (187.25,118.26) .. (187.25,116.82) -- cycle ;
\draw [color={rgb, 255:red, 74; green, 144; blue, 226 }  ,draw opacity=1 ]   (159.85,102.82) -- (177.85,102.82) ;
\draw [shift={(179.85,102.82)}, rotate = 180] [color={rgb, 255:red, 74; green, 144; blue, 226 }  ,draw opacity=1 ][line width=0.75]    (10.93,-3.29) .. controls (6.95,-1.4) and (3.31,-0.3) .. (0,0) .. controls (3.31,0.3) and (6.95,1.4) .. (10.93,3.29)   ;
\draw [color={rgb, 255:red, 74; green, 144; blue, 226 }  ,draw opacity=1 ]   (181.18,112.15) -- (193.7,123.23) ;
\draw [shift={(195.2,124.55)}, rotate = 221.49] [color={rgb, 255:red, 74; green, 144; blue, 226 }  ,draw opacity=1 ][line width=0.75]    (10.93,-3.29) .. controls (6.95,-1.4) and (3.31,-0.3) .. (0,0) .. controls (3.31,0.3) and (6.95,1.4) .. (10.93,3.29)   ;
\draw [color={rgb, 255:red, 74; green, 144; blue, 226 }  ,draw opacity=1 ]   (189.85,116.82) -- (206.05,124.38) ;
\draw [shift={(207.87,125.22)}, rotate = 204.99] [color={rgb, 255:red, 74; green, 144; blue, 226 }  ,draw opacity=1 ][line width=0.75]    (10.93,-3.29) .. controls (6.95,-1.4) and (3.31,-0.3) .. (0,0) .. controls (3.31,0.3) and (6.95,1.4) .. (10.93,3.29)   ;
\draw [color={rgb, 255:red, 74; green, 144; blue, 226 }  ,draw opacity=1 ]   (181.51,149.82) -- (199.58,153.79) ;
\draw [shift={(201.53,154.22)}, rotate = 192.4] [color={rgb, 255:red, 74; green, 144; blue, 226 }  ,draw opacity=1 ][line width=0.75]    (10.93,-3.29) .. controls (6.95,-1.4) and (3.31,-0.3) .. (0,0) .. controls (3.31,0.3) and (6.95,1.4) .. (10.93,3.29)   ;
\draw [color={rgb, 255:red, 74; green, 144; blue, 226 }  ,draw opacity=1 ]   (320,130) -- (338,130) ;
\draw [shift={(340,130)}, rotate = 180] [color={rgb, 255:red, 74; green, 144; blue, 226 }  ,draw opacity=1 ][line width=0.75]    (10.93,-3.29) .. controls (6.95,-1.4) and (3.31,-0.3) .. (0,0) .. controls (3.31,0.3) and (6.95,1.4) .. (10.93,3.29)   ;
\draw    (291.3,111) .. controls (292.99,112.65) and (293.01,114.31) .. (291.37,116) .. controls (289.73,117.69) and (289.76,119.36) .. (291.45,121) .. controls (293.14,122.65) and (293.16,124.31) .. (291.52,126) .. controls (289.88,127.69) and (289.9,129.35) .. (291.59,131) .. controls (293.28,132.64) and (293.31,134.31) .. (291.67,136) .. controls (290.03,137.69) and (290.05,139.35) .. (291.74,141) -- (291.8,145.15) -- (291.8,145.15) ;
\draw    (349.8,124.5) .. controls (351.53,126.1) and (351.6,127.77) .. (350.01,129.5) .. controls (348.42,131.23) and (348.49,132.9) .. (350.22,134.49) .. controls (351.95,136.09) and (352.02,137.76) .. (350.43,139.49) .. controls (348.84,141.22) and (348.91,142.89) .. (350.64,144.48) -- (350.8,148.15) -- (350.8,148.15) ;
\draw    (332.8,124) .. controls (334.52,125.61) and (334.57,127.28) .. (332.96,129) .. controls (331.35,130.72) and (331.4,132.38) .. (333.12,133.99) .. controls (334.84,135.6) and (334.89,137.27) .. (333.28,138.99) .. controls (331.67,140.71) and (331.72,142.38) .. (333.44,143.99) .. controls (335.16,145.6) and (335.21,147.27) .. (333.6,148.99) .. controls (331.99,150.71) and (332.04,152.37) .. (333.76,153.98) -- (333.8,155.15) -- (333.8,155.15) ;
\draw    (306.3,110) .. controls (307.97,111.67) and (307.97,113.33) .. (306.3,115) .. controls (304.63,116.67) and (304.63,118.33) .. (306.3,120) .. controls (307.97,121.67) and (307.97,123.33) .. (306.3,125) .. controls (304.63,126.67) and (304.63,128.33) .. (306.3,130) .. controls (307.97,131.67) and (307.97,133.33) .. (306.3,135) .. controls (304.63,136.67) and (304.63,138.33) .. (306.3,140) .. controls (307.97,141.67) and (307.97,143.33) .. (306.3,145) -- (306.3,148.15) -- (306.3,148.15) ;
\draw  [draw opacity=0][fill={rgb, 255:red, 208; green, 2; blue, 27 }  ,fill opacity=1 ] (317.4,130) .. controls (317.4,128.56) and (318.56,127.4) .. (320,127.4) .. controls (321.44,127.4) and (322.6,128.56) .. (322.6,130) .. controls (322.6,131.44) and (321.44,132.6) .. (320,132.6) .. controls (318.56,132.6) and (317.4,131.44) .. (317.4,130) -- cycle ;

\draw (183.85,118.82) node [anchor=north west][inner sep=0.75pt]  [rotate=-90]  {$\cong $};
\draw (309.8,127.9) node [anchor=north west][inner sep=0.75pt]    {$c\in u$};

\end{tikzpicture}
\end{adjustbox}

\end{center}

The following statement separates the formal effect of marking from the
geometric recognition of the proposed passage families.

\begin{theorem}[Marked Opetopic Cobordism Hypothesis]\label{thm:u-opetope}
Let $(O,u)$ be an equipped marked opetope, with every cell above dimension
$n$ marked. Then
\[
 \rFun^\otimes(\Bord_n^{(O,u)},\cD)
 \simeq
 \rFun_{\Cat_n}([O][u^{-1}],\cD^{\fd})
\]
for every $\cD\in\SMCat_n$.
\end{theorem}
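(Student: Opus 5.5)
The plan is to reduce the marked statement to the unmarked Theorem~\ref{thm:opetope} together with the localization property \eqref{eq:bg-marking}. Concretely, we compare two restriction maps. The first is
\[
 \rFun^\otimes(\Bord_n^{(O,u)},\cD)\longrightarrow\rFun^\otimes(\Bord_n^{O},\cD)\simeq\rFun_{\Cat_n}([O],\cD^{\fd}),
\]
induced by a functor $\Bord_n^O\to\Bord_n^{(O,u)}$. This functor regards an unmarked resolved stratification as a marked field: the full normal frame restricts to the retained partial frame, so full transversality implies partial transversality. The second is the inclusion
\[
 \rFun_{\Cat_n}([O][u^{-1}],\cD^{\fd})\hookrightarrow\rFun_{\Cat_n}([O],\cD^{\fd}),
\]
which is a monomorphism of spaces by \eqref{eq:bg-marking}, with image the union of components on which every $D(e)$, $e\in u$, is universal. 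It then suffices to show two things: the first map is also a monomorphism of spaces, and the two maps have the same image.

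\emph{Step 1 (marked passages yield universality).} Let $Z$ be a field theory on $\Bord_n^{(O,u)}$ and $e\in u$. The marked passage family $H:I\times\widehat M\to PD_u(O)$ around the coupon of $e$ is required to be admissible for both $I_+$ and $I_-$. It therefore defines a unary comparison cell $\langle s_e\rangle\Rightarrow t_e$ together with a reverse cell. The required cancellation homotopies give the two composites as identities. By Lemma~\ref{lem:bg-universal}, $Z(e)$ is universal. For cells above dimension $n$, which are all marked, this is the statement that parameter families on $[0,1]^{m-n}\times M$ are invertible. This is consistent with Definition~\ref{def:bg-infty-n}.

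\emph{Step 2 (monomorphism and essential surjectivity).} Show that $\Bord_n^{(O,u)}$ is obtained from $\Bord_n^O$ by adjoining, for each $e\in u$, inverse data to the generating comparison cell, with the passage relations as the coherence witnesses. Granting this, $\Bord_n^{(O,u)}$ is a localization of $\Bord_n^O$ in $\SMCat_n^{\fd}$. Restriction along it is then fully faithful on mapping spaces, with image exactly the theories sending the comparison cells to equivalences. Here we use the remark after \eqref{eq:bg-marking} that inverse data over an equivalence form a contractible space. Any evaluation $D\in\rFun([O][u^{-1}],\cD^{\fd})$ extends, since the inverses and cancellation homotopies can be chosen uniquely up to contractible choice. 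Combined with Step~1, the two images agree, and we obtain the equivalence. Naturality in $\cD$ is inherited from Theorem~\ref{thm:opetope}. As before, every symmetric monoidal functor from a category with all $n$-duals factors through $\cD^{\fd}$.

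\emph{Main obstacle.} The hard step is the geometric recognition in Step~2. We must show that the space of $(O,u)$-fields, with their partial transversality and reversible passage families, is exactly the localization of the space of $O$-stratified bordisms, and nothing more. Partial transversality might a priori allow extra strata or families that are not generated by the inverted comparison cells. I would first attempt a relative version of Theorem~\ref{thm:chs}, applied stratum by stratum in order of increasing depth. At a marked stratum, the retained normal frame has rank $d<r$, so the link is effectively a suspension of a lower link in the residual directions. The fiber of extensions would then be identified with the space of inverse data for the corresponding comparison cell, which is contractible over a universal cell. This identification of the marked link fiber is where additional hypotheses on the marked presentation, such as the coherent reversal and cancellation homotopies demanded in Definition~\ref{def:Oustr}, will have to be used explicitly.
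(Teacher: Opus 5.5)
Your formal skeleton matches the paper's. Both arguments reduce the theorem to an equivalence $\Bord_n^{(O,u)}\simeq\Bord_n^O[j_O(u)^{-1}]$, after which the universal property of localization, Theorem~\ref{thm:opetope}, and \eqref{eq:bg-marking} finish the argument. Once that equivalence is granted, your Step~1 is subsumed.

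\textbf{The gap.} This equivalence is the only non-formal content of the theorem, and it is exactly what you grant in Step~2. In particular, you never construct a comparison functor $\Bord_n^{(O,u)}\to\Bord_n^O[j_O(u)^{-1}]$. So nothing rules out the extra strata and families that partial transversality permits.

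Your suggested repair does not apply as stated:
\begin{itemize}
\item Theorem~\ref{thm:chs} adjoins cones whose strata carry the full normal framing, with tangential bundle $\underline{\bR}^{\,n-r}$. A marked stratum instead retains only $NF_u(S)$, of rank $d<r$, and carries a residual framing of $TW_t\oplus\underline{\bR}^{\,n-k}\oplus\underline{\bR}^{\,r-d}$.
\item At nonregular marked loci there is not even a smooth inverse-image stratum. So there is no depth-by-depth cone attachment to which a relative singularity theorem could be applied.
\item Relative only to the lower cells, the fiber over a marked cell would be the union of components of universal cells in $\rMap(Zs_e,Zt_e)$, not a space of inverse data. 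The latter appears only relative to an already chosen unmarked extension.
\end{itemize}

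\textbf{How the paper fills the step.} It gives an explicit construction in both directions. Let $\lambda_r$ be the component of the $r$-th framing vector along the dropped normal direction $e_r$.
\begin{itemize}
\item Since $e_r$ is not retained at a marked stratum, the coupon $L_uj_O(c)$ with its $r$-th framing vector reversed is admissible in $\Bord_n^{(O,u)}$. It is inverse to $L_uj_O(c)$: a cup-shaped marked stratum, on whose fold $\lambda_r$ vanishes, witnesses $c\circ c^{-1}\cong\mathrm{id}$. Hence $L_u$ factors through the localization.
\item Conversely, $ev\colon\Bord_n^{(O,u)}\to\Bord_n^O[j_O(u)^{-1}]$ reads the sign of $\lambda_r$. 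It sends a marked coupon to $j_O(c)$ where $\lambda_r>0$, to $j_O(c)^{-1}$ where $\lambda_r<0$, and the locus $\lambda_r=0$ to $j_O(c)\circ j_O(c)^{-1}\cong\mathrm{id}$.
\end{itemize}

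This is the idea your proposal lacks. The only new freedom in a marked field is the rotation of the $r$-th framing vector, and its zero locus is precisely the cancellation witness. Your Step~1 points toward this, but it misplaces the roles. For $r\le n$ the inverse is the reversed coupon, an $r$-morphism, while the passage families are the higher cells witnessing cancellation.

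The paper is itself brief about why $ev$ extends coherently to an equivalence, but it does give the construction. Your argument, by contrast, remains conditional on Step~2.
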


\begin{proof}

Since every $O$-stratification is an $(O,u)$-stratification, there is a functor
\[
  L_u:\Bord_n^O\to \Bord_n^{(O,u)}
\]
For each marked cell $ c\in u $, the image $L_u j_O(c)$ is invertible in
$ \Bord_n^{(O,u)}$. Its inverse is represented by the coupon $L_u j_O(c)$ with the
$r$-th framing vector reversed. Thus the functor factors as

\[
  L_u:\Bord_n^O\to \Bord_n^O[j_O(u)^{-1}]\to\Bord_n^{(O,u)}
\]
Conversely, consider $(j_O(c),\tau_c)$ with a varying framing, recorded by
$\lambda_r = \langle e_r, q_{u,S}\bigl(dF(v_r)\bigr) \rangle$. Define
\[
 ev((j_O(c),\tau_c))=\begin{cases}
  j_O(c), &\text{ if } \lambda_r >0\\
  j_O(c)^{-1}, &\text{ if } \lambda_r <0\\
  \mathrm{id} \cong j_O(c)\circ j_O(c)^{-1}, &\text{ if } \lambda_r =0\\
 \end{cases} 
\]
\begin{center}

\tikzset{every picture/.style={line width=0.75pt}} 

\begin{adjustbox}{max totalsize={\linewidth}{0.86\textheight}}
\begin{tikzpicture}[x=0.75pt,y=0.75pt,yscale=-1,xscale=1]

\draw  [draw opacity=0][fill={rgb, 255:red, 245; green, 166; blue, 35 }  ,fill opacity=1 ] (160,110) -- (270,110) -- (270,160) -- (160,160) -- cycle ;
\draw  [draw opacity=0][fill={rgb, 255:red, 248; green, 231; blue, 28 }  ,fill opacity=1 ] (300,100) -- (380,100) -- (380,180) -- (300,180) -- cycle ;
\draw [color={rgb, 255:red, 208; green, 2; blue, 27 }  ,draw opacity=1 ]   (350,100) -- (350,180) ;
\draw [color={rgb, 255:red, 74; green, 144; blue, 226 }  ,draw opacity=1 ]   (350,120) -- (368,120) ;
\draw [shift={(370,120)}, rotate = 180] [color={rgb, 255:red, 74; green, 144; blue, 226 }  ,draw opacity=1 ][line width=0.75]    (10.93,-3.29) .. controls (6.95,-1.4) and (3.31,-0.3) .. (0,0) .. controls (3.31,0.3) and (6.95,1.4) .. (10.93,3.29)   ;
\draw [color={rgb, 255:red, 74; green, 144; blue, 226 }  ,draw opacity=1 ]   (350,160) -- (368,160) ;
\draw [shift={(370,160)}, rotate = 180] [color={rgb, 255:red, 74; green, 144; blue, 226 }  ,draw opacity=1 ][line width=0.75]    (10.93,-3.29) .. controls (6.95,-1.4) and (3.31,-0.3) .. (0,0) .. controls (3.31,0.3) and (6.95,1.4) .. (10.93,3.29)   ;
\draw [color={rgb, 255:red, 0; green, 0; blue, 0 }  ,draw opacity=1 ] [dash pattern={on 4.5pt off 4.5pt}]  (340,110) .. controls (380.19,109.87) and (380.14,148.03) .. (341.78,149.94) ;
\draw [shift={(340,150)}, rotate = 358.86] [color={rgb, 255:red, 0; green, 0; blue, 0 }  ,draw opacity=1 ][line width=0.75]    (10.93,-3.29) .. controls (6.95,-1.4) and (3.31,-0.3) .. (0,0) .. controls (3.31,0.3) and (6.95,1.4) .. (10.93,3.29)   ;
\draw [color={rgb, 255:red, 0; green, 0; blue, 0 }  ,draw opacity=1 ]   (160,110) -- (268,110) ;
\draw [shift={(270,110)}, rotate = 180] [color={rgb, 255:red, 0; green, 0; blue, 0 }  ,draw opacity=1 ][line width=0.75]    (10.93,-3.29) .. controls (6.95,-1.4) and (3.31,-0.3) .. (0,0) .. controls (3.31,0.3) and (6.95,1.4) .. (10.93,3.29)   ;
\draw [color={rgb, 255:red, 74; green, 144; blue, 226 }  ,draw opacity=1 ]   (190,110) -- (208,110) ;
\draw [shift={(210,110)}, rotate = 180] [color={rgb, 255:red, 74; green, 144; blue, 226 }  ,draw opacity=1 ][line width=0.75]    (10.93,-3.29) .. controls (6.95,-1.4) and (3.31,-0.3) .. (0,0) .. controls (3.31,0.3) and (6.95,1.4) .. (10.93,3.29)   ;
\draw  [color={rgb, 255:red, 208; green, 2; blue, 27 }  ,draw opacity=1 ][fill={rgb, 255:red, 208; green, 2; blue, 27 }  ,fill opacity=1 ] (187.4,110) .. controls (187.4,108.56) and (188.56,107.4) .. (190,107.4) .. controls (191.44,107.4) and (192.6,108.56) .. (192.6,110) .. controls (192.6,111.44) and (191.44,112.6) .. (190,112.6) .. controls (188.56,112.6) and (187.4,111.44) .. (187.4,110) -- cycle ;
\draw  [color={rgb, 255:red, 208; green, 2; blue, 27 }  ,draw opacity=1 ][fill={rgb, 255:red, 208; green, 2; blue, 27 }  ,fill opacity=1 ] (237.4,110) .. controls (237.4,108.56) and (238.56,107.4) .. (240,107.4) .. controls (241.44,107.4) and (242.6,108.56) .. (242.6,110) .. controls (242.6,111.44) and (241.44,112.6) .. (240,112.6) .. controls (238.56,112.6) and (237.4,111.44) .. (237.4,110) -- cycle ;
\draw [color={rgb, 255:red, 74; green, 144; blue, 226 }  ,draw opacity=1 ]   (237.4,110) -- (219.4,110) ;
\draw [shift={(217.4,110)}, rotate = 360] [color={rgb, 255:red, 74; green, 144; blue, 226 }  ,draw opacity=1 ][line width=0.75]    (10.93,-3.29) .. controls (6.95,-1.4) and (3.31,-0.3) .. (0,0) .. controls (3.31,0.3) and (6.95,1.4) .. (10.93,3.29)   ;
\draw [color={rgb, 255:red, 0; green, 0; blue, 0 }  ,draw opacity=1 ]   (160,160) -- (268,160) ;
\draw [shift={(270,160)}, rotate = 180] [color={rgb, 255:red, 0; green, 0; blue, 0 }  ,draw opacity=1 ][line width=0.75]    (10.93,-3.29) .. controls (6.95,-1.4) and (3.31,-0.3) .. (0,0) .. controls (3.31,0.3) and (6.95,1.4) .. (10.93,3.29)   ;
\draw [color={rgb, 255:red, 0; green, 0; blue, 0 }  ,draw opacity=1 ]   (160,130) -- (268,130) ;
\draw [shift={(270,130)}, rotate = 180] [color={rgb, 255:red, 0; green, 0; blue, 0 }  ,draw opacity=1 ][line width=0.75]    (10.93,-3.29) .. controls (6.95,-1.4) and (3.31,-0.3) .. (0,0) .. controls (3.31,0.3) and (6.95,1.4) .. (10.93,3.29)   ;
\draw [color={rgb, 255:red, 0; green, 0; blue, 0 }  ,draw opacity=1 ] [dash pattern={on 4.5pt off 4.5pt}]  (320,110) .. controls (360.19,109.87) and (360.14,148.03) .. (321.78,149.94) ;
\draw [shift={(320,150)}, rotate = 358.86] [color={rgb, 255:red, 0; green, 0; blue, 0 }  ,draw opacity=1 ][line width=0.75]    (10.93,-3.29) .. controls (6.95,-1.4) and (3.31,-0.3) .. (0,0) .. controls (3.31,0.3) and (6.95,1.4) .. (10.93,3.29)   ;
\draw [color={rgb, 255:red, 0; green, 0; blue, 0 }  ,draw opacity=1 ] [dash pattern={on 4.5pt off 4.5pt}]  (300,110) .. controls (340.19,109.87) and (340.14,148.03) .. (301.78,149.94) ;
\draw [shift={(300,150)}, rotate = 358.86] [color={rgb, 255:red, 0; green, 0; blue, 0 }  ,draw opacity=1 ][line width=0.75]    (10.93,-3.29) .. controls (6.95,-1.4) and (3.31,-0.3) .. (0,0) .. controls (3.31,0.3) and (6.95,1.4) .. (10.93,3.29)   ;
\draw  [draw opacity=0] (239.59,110.08) .. controls (239.6,110.22) and (239.6,110.36) .. (239.6,110.5) .. controls (239.6,121.27) and (228.59,130) .. (215,130) .. controls (201.41,130) and (190.4,121.27) .. (190.4,110.5) .. controls (190.4,110.01) and (190.42,109.52) .. (190.47,109.03) -- (215,110.5) -- cycle ; \draw  [color={rgb, 255:red, 208; green, 2; blue, 27 }  ,draw opacity=1 ] (239.59,110.08) .. controls (239.6,110.22) and (239.6,110.36) .. (239.6,110.5) .. controls (239.6,121.27) and (228.59,130) .. (215,130) .. controls (201.41,130) and (190.4,121.27) .. (190.4,110.5) .. controls (190.4,110.01) and (190.42,109.52) .. (190.47,109.03) ;  

\draw (332,83.4) node [anchor=north west][inner sep=0.75pt]    {$c\in u$};
\draw (188,87.4) node [anchor=north west][inner sep=0.75pt]    {$c$};
\draw (238,83.4) node [anchor=north west][inner sep=0.75pt]    {$c^{-1}$};
\draw (171,131.4) node [anchor=north west][inner sep=0.75pt]    {$c\circ c^{-1} \cong \ \mathrm{id}$};

\end{tikzpicture}
\end{adjustbox}

\end{center} 
This assignment extends to a functor $ev$:
\[
  ev:\Bord_n^{(O,u)}\to \Bord_n^O[j_O(u)^{-1}]
\]
This defines an equivalence. Consequently,
\[
 \rFun^\otimes(\Bord_n^{(O,u)},\cD) \cong  \rFun^\otimes(\Bord_n^O[j_O(u)^{-1}],\cD) \cong \rFun_{\Cat_n}([O][u^{-1}],\cD^{\fd})
\]

\end{proof}

\subsection{From Opetopes to Categories}\label{subsec:from-opetopes}

Let $\cC$ be an $(\infty,n)$-category, and consider
\[
 (O,c)\in I_{\cC}:=\int_{\bO}N_{\bO}\cC.
\]
Let $u_c$ consist of the cells of $O$ whose comparison cells become
invertible under $c$. Then $c$ factors through $[O][u_c^{-1}]$. Using the
compatible localizations in the chosen coherent presentation, we have
\[
 \cC\simeq\colim_{(O,c)\in I_{\cC}}[O]
 \simeq\colim_{(O,c)\in I_{\cC}}[O][u_c^{-1}].
\]
Every cell above dimension $n$ is marked, so these pairs satisfy the
dimension condition of Theorem~\ref{thm:u-opetope}. The intended global
construction is therefore
\[
 \Bord_n^{\cC}
 \simeq\colim_{(O,c)\in I_{\cC}}\Bord_n^{(O,u_c)}.
\]
One may also consider the stabilized expression
\[
 \Bord_\infty^{\cC}
 :=\colim_{(O,c)\in I_{\cC}}\Bord_{\dim O}^{O},
\]
provided compatible transition functors and framing stabilizations across
dimensions have been specified.

For an $r$-morphism $c$, choose an opetopic presentation $(O_c,u_c)$
together with its resolved source and target configurations, and write
\[
 NF(c):=NF_{u_c}(O_c).
\]
Changes of presentation must be retained coherently.

\begin{definition}[Manifold with $\cC$-stratification]\label{def:Cstr}
Let $(M,\tau)$ be a $k$-manifold with corners and an $n$-framing,
$0\leq k\leq n$. A \emph{$\cC$-stratification} is a globally compatible
system of local $(O_c,u_c)$-normal fields, labeled by the corresponding
coherent diagrams in $\cC$.

On a locus admitting a resolved stratification $\mathcal W=\{W_t\}$,
the labels give maps
\[
 l:\mathcal W^{(r)}\longrightarrow
 \{r\text{-morphisms of }\cC\},
\]
interpreted in the corresponding spaces of cells in families. For each
smooth codimension-$r$ stratum $W_t$, put $d=|NF(l(W_t))|$ and retain:
\begin{enumerate}
  \item its prescribed partial normal frame $NF(l(W_t))$ and compatible
  residual framing of 
  $\tau_{t}:TW_t\oplus\underline{\bR}^{\,n-k}\oplus\underline{\bR}^{\,r-d}\xr{\cong}\underline{\bR}^{\,n-d}$ ;
  \item a specified stratified tubular germ
  \[
  N(W_t)\cong PD(O_{l(W_t)})\times W_t,
  \]
  compatible with the ambient framing through the normal splitting
  $(NF(l(W_t)),\tau_{t})$; these germs agree on punctured tubes and
  iterated incidence charts;
  \item neat intersections with every chosen core face $Q$, of dimension
  $\dim Q-r$, and product structures on all external collars.
\end{enumerate}
At nonregular marked loci, the local normal-map description is primary;
no smooth inverse-image stratification is assumed there.

The presentation also retains coherent local substitutions replacing a
coefficient pasting by its composite and deleting identity coupons. These
relations express composition and units in $\cC$. 

Define $\Bord_n^{\cC}$ from compact collared bordisms with these fields,
using gluing, disjoint union, and the specified higher presentation data.
As in Definition~\ref{def:Oustr}, comparisons above degree $n$ are
parameter families with coherent reversibility.
\end{definition}

\begin{example}[$\Bord^{\mathcal{C}}_1$]
  Let $\mathcal{C}$ be a $(\infty,1)$-category.
  The category $\Bord^{\mathcal{C}}_1$ has the following presentation:
  \begin{itemize}
    \item Objects are finite sets $I$ with a coloring
    $c:I\to \{+,-\}\times \Obj(\mathcal{C}) $.
    \item Morphisms are represented by oriented $1$-manifolds $M$,
    a boundary decomposition $\partial M = \partial_+ M \sqcup \partial_- M $,
    a finite set of interior points $S$, and colorings
      $$ c_0: M\setminus S \to  \Obj(\mathcal{C}),\quad c_1:  S \to \Mor(\mathcal{C})$$
      satisfying the compatibility conditions. For $p\in S$, the morphism
      $c_1(p)$ has source and target prescribed by the adjacent $c_0$-labels.
      If $c_1(p)$ is not invertible, its direction must agree with the local
      orientation at $p$.

      Paths in the morphism space are given by isotopies and by merging or
      splitting marked points according to composition.
  \end{itemize}
\end{example}

\begin{center}

\tikzset{every picture/.style={line width=0.75pt}} 

\begin{adjustbox}{max totalsize={\linewidth}{0.86\textheight}}
\begin{tikzpicture}[x=0.75pt,y=0.75pt,yscale=-1,xscale=1]

\draw  [fill={rgb, 255:red, 0; green, 0; blue, 0 }  ,fill opacity=1 ] (176.07,155.67) .. controls (176.07,154.23) and (177.23,153.07) .. (178.67,153.07) .. controls (180.1,153.07) and (181.27,154.23) .. (181.27,155.67) .. controls (181.27,157.1) and (180.1,158.27) .. (178.67,158.27) .. controls (177.23,158.27) and (176.07,157.1) .. (176.07,155.67) -- cycle ;
\draw [color={rgb, 255:red, 208; green, 2; blue, 27 }  ,draw opacity=1 ]   (112.26,156.57) .. controls (153.25,143.41) and (144.33,154.09) .. (177.14,155.6) ;
\draw [shift={(178.67,155.67)}, rotate = 182.06] [color={rgb, 255:red, 208; green, 2; blue, 27 }  ,draw opacity=1 ][line width=0.75]    (10.93,-3.29) .. controls (6.95,-1.4) and (3.31,-0.3) .. (0,0) .. controls (3.31,0.3) and (6.95,1.4) .. (10.93,3.29)   ;
\draw [shift={(112.26,156.57)}, rotate = 161.91] [color={rgb, 255:red, 208; green, 2; blue, 27 }  ,draw opacity=1 ][line width=0.75]    (10.93,-3.29) .. controls (6.95,-1.4) and (3.31,-0.3) .. (0,0) .. controls (3.31,0.3) and (6.95,1.4) .. (10.93,3.29)   ;
\draw [color={rgb, 255:red, 74; green, 144; blue, 226 }  ,draw opacity=1 ]   (178.67,155.67) .. controls (221.88,141.55) and (212.17,152.53) .. (245.46,154.07) ;
\draw [shift={(247.01,154.13)}, rotate = 182.06] [color={rgb, 255:red, 74; green, 144; blue, 226 }  ,draw opacity=1 ][line width=0.75]    (10.93,-3.29) .. controls (6.95,-1.4) and (3.31,-0.3) .. (0,0) .. controls (3.31,0.3) and (6.95,1.4) .. (10.93,3.29)   ;
\draw [color={rgb, 255:red, 208; green, 2; blue, 27 }  ,draw opacity=1 ]   (121.28,177.03) .. controls (161.53,168.74) and (157.57,201.82) .. (116.23,204.69) ;
\draw [shift={(114.32,204.8)}, rotate = 357.4] [color={rgb, 255:red, 208; green, 2; blue, 27 }  ,draw opacity=1 ][line width=0.75]    (10.93,-3.29) .. controls (6.95,-1.4) and (3.31,-0.3) .. (0,0) .. controls (3.31,0.3) and (6.95,1.4) .. (10.93,3.29)   ;
\draw [shift={(121.28,177.03)}, rotate = 166.58] [color={rgb, 255:red, 208; green, 2; blue, 27 }  ,draw opacity=1 ][line width=0.75]    (10.93,-3.29) .. controls (6.95,-1.4) and (3.31,-0.3) .. (0,0) .. controls (3.31,0.3) and (6.95,1.4) .. (10.93,3.29)   ;
\draw  [fill={rgb, 255:red, 0; green, 0; blue, 0 }  ,fill opacity=1 ] (183.65,189.81) .. controls (183.98,191.21) and (183.11,192.61) .. (181.71,192.93) .. controls (180.31,193.26) and (178.91,192.39) .. (178.59,190.99) .. controls (178.26,189.59) and (179.13,188.19) .. (180.53,187.87) .. controls (181.93,187.54) and (183.33,188.41) .. (183.65,189.81) -- cycle ;
\draw [color={rgb, 255:red, 74; green, 144; blue, 226 }  ,draw opacity=1 ]   (243.56,186.07) .. controls (206.47,180.99) and (214.69,183.73) .. (182.61,190.11) ;
\draw [shift={(181.12,190.4)}, rotate = 348.99] [color={rgb, 255:red, 74; green, 144; blue, 226 }  ,draw opacity=1 ][line width=0.75]    (10.93,-3.29) .. controls (6.95,-1.4) and (3.31,-0.3) .. (0,0) .. controls (3.31,0.3) and (6.95,1.4) .. (10.93,3.29)   ;
\draw [shift={(243.56,186.07)}, rotate = 7.89] [color={rgb, 255:red, 74; green, 144; blue, 226 }  ,draw opacity=1 ][line width=0.75]    (10.93,-3.29) .. controls (6.95,-1.4) and (3.31,-0.3) .. (0,0) .. controls (3.31,0.3) and (6.95,1.4) .. (10.93,3.29)   ;
\draw [color={rgb, 255:red, 126; green, 211; blue, 33 }  ,draw opacity=1 ]   (181.12,190.4) .. controls (142.22,213.92) and (150.58,221.22) .. (117.85,227.85) ;
\draw [shift={(116.33,228.16)}, rotate = 348.99] [color={rgb, 255:red, 126; green, 211; blue, 33 }  ,draw opacity=1 ][line width=0.75]    (10.93,-3.29) .. controls (6.95,-1.4) and (3.31,-0.3) .. (0,0) .. controls (3.31,0.3) and (6.95,1.4) .. (10.93,3.29)   ;

\draw (86.48,139.24) node [anchor=north west][inner sep=0.75pt]    {$A$};
\draw (243.68,125.64) node [anchor=north west][inner sep=0.75pt]    {$B$};
\draw (90.48,170.44) node [anchor=north west][inner sep=0.75pt]    {$A$};
\draw (79.68,200.84) node [anchor=north west][inner sep=0.75pt]    {$-A$};
\draw (172.48,117.04) node [anchor=north west][inner sep=0.75pt]    {$f$};
\draw (93.68,226.24) node [anchor=north west][inner sep=0.75pt]    {$-C$};
\draw (179.28,207.44) node [anchor=north west][inner sep=0.75pt]    {$g$};
\draw (232.88,200.04) node [anchor=north west][inner sep=0.75pt]    {$-B$};

\end{tikzpicture}
\end{adjustbox}

\end{center}

\begin{theorem}[Generalized Cobordism Hypothesis]\label{thm:gch}
Let $\cC$ be a $(\infty,n)$-category. Assume the geometric hypotheses of
Theorem~\ref{thm:u-opetope}. Then there is a natural equivalence of spaces
\[
 \rFun^\otimes(\Bord_n^{\cC},\cD)
 \simeq
 \rFun_{\Cat_n}(\cC,\cD^{\fd})
\]
for every $\cD\in\SMCat_n$.
\end{theorem}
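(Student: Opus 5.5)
The plan is to reduce the statement to the marked opetopic case, Theorem~\ref{thm:u-opetope}, by a colimit argument. First I will establish the descent identification
\[
 \Bord_n^{\cC}\simeq\colim_{(O,c)\in I_{\cC}}\Bord_n^{(O,u_c)}
\]
in $\SMCat_n^{\fd}$. The map from the colimit to $\Bord_n^{\cC}$ is induced by the local label functors: a pair $(O,c)$ sends an $(O,u_c)$-field to the $\cC$-stratification obtained by pushing labels along $c$. These maps are compatible along arrows $(P,p)\to(O,c)$ of $I_{\cC}$, since restriction of shapes corresponds to restriction of normal fields.

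With the descent identification in hand, the adjunction is formal. For $\cD\in\SMCat_n$ I will compute
\[
\begin{aligned}
 \rFun^\otimes(\Bord_n^{\cC},\cD)
 &\simeq\lim_{(O,c)\in I_{\cC}^{\op}}\rFun^\otimes(\Bord_n^{(O,u_c)},\cD)\\
 &\simeq\lim_{(O,c)\in I_{\cC}^{\op}}\rFun_{\Cat_n}([O][u_c^{-1}],\cD^{\fd})\\
 &\simeq\rFun_{\Cat_n}\Bigl(\colim_{(O,c)\in I_{\cC}}[O][u_c^{-1}],\cD^{\fd}\Bigr)
 \simeq\rFun_{\Cat_n}(\cC,\cD^{\fd}).
\end{aligned}
\]
The first step uses that a colimit in $\SMCat_n^{\fd}$ is sent to a limit of mapping spaces. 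Since each $\Bord_n^{(O,u_c)}$ has all $n$-duals, functors into $\cD$ factor through $\cD^{\fd}$, so I may work in $\SMCat_n^{\fd}$. The second step is Theorem~\ref{thm:u-opetope}, applied pointwise. The last step is the density equivalence $\cC\simeq\colim[O][u_c^{-1}]$ recorded in Subsection~\ref{subsec:from-opetopes}.

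For this pointwise application to produce a limit, the equivalences of Theorem~\ref{thm:u-opetope} must be natural in $(O,u)$. I will check this by noting that they are induced by restriction along $j_O$, or its marked version. These restrictions commute with the functors $\Bord_n^{(P,u_p)}\to\Bord_n^{(O,u_c)}$ induced by shape maps, because the coupon of a face of $O$ is the restricted coupon. Naturality in $\cD$ and in $\cC$ follows the same way, giving the left adjoint $L_n$.

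\textbf{Main obstacle: the descent identification.} I expect the descent identification itself to be the hardest step: showing that the map from the colimit to $\Bord_n^{\cC}$ is an equivalence. My approach is a locality argument in the style of Grady--Pavlov \cite{GPLocality}. Every $\cC$-stratified bordism, together with every family of such bordisms, admits a mesh refinement into pieces, each carrying a single $(O,u_c)$-field. By the definition of $\cC$-stratifications, the space of such decompositions is contractible; the coherent local substitutions of Definition~\ref{def:Cstr} (composition and deletion of identity coupons) are exactly what identifies different refinements.

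The cleanest route I see is to compare the two sides through their spaces of field theories. Both sides have all $n$-duals. For each $\cD$, I would run the cell-by-cell induction of Proposition~\ref{prop:Er} on $\Bord_n^{\cC}$ directly, filtering by the depth of the strata. At each depth $r$, the fiber is a product over $r$-cells of $\cC$ of the spaces of fillers, with the marked relations imposing invertibility, exactly as in Theorem~\ref{thm:u-opetope}. A Yoneda argument then upgrades this identification of field-theory spaces to the desired equivalence. If the induction over depth is unavailable for infinitely many cells at once, I would instead write $\cC$ as a filtered colimit of finite subpresentations and check that both sides commute with it.
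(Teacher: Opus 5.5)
Your proposal follows the paper's own proof: the paper likewise obtains $\Bord_n^{\cC}\simeq\colim_{(O,c)\in I_{\cC}}\Bord_n^{(O,u_c)}$, maps out of the colimit, applies Theorem~\ref{thm:u-opetope} termwise, and concludes by density, and your naturality check along $j_O$ makes explicit a point the paper leaves implicit. Be aware that the paper does not prove the descent identification but invokes it as ``coherent mesh descent'' (Section~\ref{sec:bg} says such a descent theorem must still be proved, so contractibility of decompositions does not follow from the definitions), and your preferred ``cleanest route'' would not supply it: the depth filtration of $\Bord_n^{\cC}$ is not a free attachment of fully framed cones, because it carries the composition and unit substitutions and partially framed marked strata, so Theorem~\ref{thm:chs} and Proposition~\ref{prop:Er} do not apply, and for a non-free $\cC$ the fiber is not a product over its $r$-cells; were such a direct induction available, it would prove the theorem outright and make the colimit unnecessary.
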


\begin{proof}
Choose a compatible cut mesh expressing the $\cC$-stratification in pieces
locally described by $(O_c,u_c)$-normal maps. Coherent mesh descent asserts
that these presentations and their relative parameter families compute
\[
 \Bord_n^{\cC}
 \simeq\colim_{(O,c)\in I_{\cC}}\Bord_n^{(O,u_c)}.
\]
Mapping out of the colimit and applying Theorem~\ref{thm:u-opetope} gives
\[
\begin{aligned}
 \rFun^\otimes(\Bord_n^{\cC},\cD)
 &\simeq\lim_{(O,c)\in I_{\cC}^{\op}}
 \rFun^\otimes(\Bord_n^{(O,u_c)},\cD)\\
 &\simeq\lim_{(O,c)\in I_{\cC}^{\op}}
 \rFun_{\Cat_n}([O][u_c^{-1}],\cD^{\fd})\\
 &\simeq\rFun_{\Cat_n}(\cC,\cD^{\fd}),
\end{aligned}
\]
as required.
\end{proof}

\section{\texorpdfstring{Branefolds, Lasagna, and Algebraic $K$-Theory}{Branefolds, Lasagna, and Algebraic K-Theory}}\label{sec:lasagna}
We now pass to symmetric monoidal coefficients. Our goal is a geometric
construction of the adjoint triple
\[
  \SMCat_n
  \begin{gathered}
    \xrightarrow{\;L_n=\Bord_n^{(-)}\;}\\[-1ex]
    \xleftarrow{\;U=\mathrm{forget}\;}\\[-1ex]
    \xrightarrow{\;R_n=(-)^{\fd}\;}
  \end{gathered}
  \SMCat_n^{\fd},
  \qquad L_n\dashv U\dashv R_n.
\]
Here $\cD^{\fd}$ is the maximal replete symmetric monoidal subcategory with
all $n$-duals, restricting morphisms as well as objects. As before, $\rFun$
and $\rMap$ denote spaces of functors or maps and their equivalences; all
limits and colimits are derived.

To encode the tensor product geometrically, we introduce \emph{branefolds}:
stratified spaces locally modeled on reduced Poincar\'e-dual fans of
opetopes. We first explain the pointed construction and its relation to
monoidal coefficients.

\subsection{Pointed and Symmetric Monoidal Cases}\label{subsec:pointed-sm}

For a pointed category $(\cC,c_*)$, the chosen object induces a functor
$c_*:\Bord_n^{\fr}\to\Bord_n^{\cC}$. Define
\[
  \Bord_n^{\cC,c_*}
  :=\Bord_n^{\cC}
  \bigsqcup_{\Bord_n^{\fr}}\emptyset,
\]
where $\emptyset$ is the trivial symmetric monoidal category, represented
geometrically by the empty configuration. This pushout identifies the
\emph{pure} $c_*$-labeled strata with the vacuum. It does not erase arbitrary
nonidentity coupons between $c_*$-labeled regions.

For $\cD\in\SMCat_n$, Theorem~\ref{thm:gch} and the universal property of
the pushout give
\[
\begin{aligned}
 \rFun^{\otimes}(\Bord_n^{\cC,c_*},\cD)
 &\simeq
 \rFun(\cC,\cD^{\fd})
 \times^h_{(\cD^{\fd})^{\cong}}\{\one_{\cD}\}\\
 &\simeq
 \rFun_*\bigl((\cC,c_*),(\cD^{\fd},\one_{\cD})\bigr).
\end{aligned}
\]
A pointed functor includes a specified equivalence
$F(c_*)\simeq\one_{\cD}$ and its coherence.

For $m\geq1$, use the categorical loop--deloop adjunction
\[
 B^m:\operatorname{Alg}_{\cE_m}(\Cat_n)
 \rightleftarrows\Cat_{n+m,*}:\End^m,
 \qquad B^m\dashv\End^m.
\]
Here $\End^m$ takes endomorphisms successively at the distinguished object
and its iterated identities, while $B^m$ is the reduced $m$-fold delooping.
In particular, $\End^mB^m\cC\simeq\cC$.

To use this adjunction for bordisms, we require its compatibility with the
pointed free-dual construction, including the corresponding reduced
objects and dualizable subcategories. In this setting, for a pointed
$\cA$ with $B^m\End^m\cA\simeq\cA$, the desired comparison is
\[
 \Bord_n^{\End^m\cA,\otimes}
 \simeq
 \End^m\Bord_{n+m}^{\cA,*}.
\]
This compatibility is an additional input. 
Accordingly, for an $\cE_m$-monoidal $n$-category $(\cC,\otimes)$, define
\[
 \Bord_{n,\cE_m}^{\cC,\otimes}
 :=\End^m\Bord_{n+m}^{B^m\cC,*}.
\]
Under the stated compatibility, the pointed generalized CH gives
\[
 \rFun^{\otimes}(\Bord_{n,\cE_m}^{\cC,\otimes},\cD)
 \simeq
 \rFun_{\operatorname{Alg}_{\cE_m}(\Cat_n)}
 \bigl((\cC,\otimes_{\cC}),(\cD^{\fd},\otimes_{\cD})\bigr),
\]
where the target has its restricted $\cE_m$-structure.

Fix compatible operad inclusions $\cE_m\to\cE_{m+1}$ with colimit
$\cE_\infty$. For $(\cC,\otimes)\in\SMCat_n$, use the restricted
structures and compatible stabilization maps to define
\begin{equation}\label{eq:lasagna-stabilization}
 \Bord_n^{\cC,\otimes}
 :=\colim_{m\to\infty}\End^m\Bord_{n+m}^{B^m\cC,*}.
\end{equation}
The colimit is taken in $\SMCat_n$. The transition maps are the ones
corresponding, under the preceding mapping equivalences, to restriction
from $\cE_{m+1}$-monoidal to $\cE_m$-monoidal functors. Passing to the limit
of these mapping spaces recovers symmetric monoidal functors. We next give
the proposed geometric description of~\eqref{eq:lasagna-stabilization}.

\subsection{Branefolds and Lasagna}\label{subsec:branefolds}

In an $m$-fold delooping, the cells below degree $m$ are vacuum data.
Deleting their open dual strata leaves a diagram supported in codimension
at least $m$. We retain the ambient normal germs and ports of this reduced
support; they record information not determined by its underlying topology.

\begin{definition}[Local brane]\label{def:local-brane}
For an equipped $(n+1)$-dimensional opetope $O$, an \emph{$n$-local brane}
is the reduced stratified support
\[
 Y=PD(O_{\geq1})
 :=PD(O)\setminus\bigcup_{\operatorname{codim}S=0}S.
\]
Only the open codimension-zero strata are removed. Their interfaces and
normal incidence data are retained. We also allow the stabilized models
\[
 Y=PD(K_{\geq m})
 :=PD(K)\setminus\bigcup_{\operatorname{codim}S<m}S,
 \qquad \dim K=n+m,
\]
with specified compatible realizations under the opetopic plus
construction. The formal degree of these models is $n$.

Each chosen model includes a distinguished normal vertex, collared input
and output ports, and resolved framing data. On a chosen compact
representative, denote the input ports by $\partial_+Y$ and the output
ports by $\partial_-Y$. 

The framing data include a trivial rank-$n$ \emph{reference tangent
bundle}, denoted $TY\cong Y\times\bR^n$, with compatible identifications
on the resolved smooth sheets. At a branch point, $TY$ is a subbundle of $TPD(O)|_Y$.
\end{definition}

\begin{definition}[Branefold]\label{def:branefold}
An \emph{$n$-branefold} is a Hausdorff stratified space $(B,\mathcal P)$
with smooth strata and a locally finite atlas of equipped local models.
At a point of a $k$-stratum $P_s$, a chart has the form
\[
 U\cong V\times Y,
 \qquad V\subset\bR^k,\qquad
 P_s\cap U\cong V\times\{o_Y\},
\]
where $Y$ is an $(n-k)$-local brane with distinguished vertex $o_Y$.
Chart changes are stratified homeomorphisms, smooth on every stratum and
compatible with the resolved normal germs, ports, and collars.

The charts carry rank-$n$ bundles
$T\bR^k\oplus TY$ and specified lifts $g_{ab}$ of their transition maps,
with
\[
 g_{aa}=\mathrm{id},\qquad g_{ab}g_{bc}=g_{ac}
\]
on overlaps, with pullbacks understood. These bundles glue to the
reference tangent bundle $TB$.
For $m\geq n$, an \emph{$m$-framing} of $B$ is a trivialization
\[
 \tau:TB\oplus\underline{\bR}^{\,m-n}
 \xr{\cong}\underline{\bR}^{\,m}.
\]
For branefolds with corners, the atlas, reference bundles, and framings
are required to be products on the specified categorical boundary collars.
\end{definition}

We call a branefold decorated by $(\cC,\otimes)$ a \emph{lasagna}.

\begin{definition}[$(\cC,\otimes)$-lasagna]\label{def:lasagna}
Let $(B,\mathcal P,\tau)$ be a $k$-branefold with corners and a
stable framing. A \emph{$(\cC,\otimes)$-decoration}
consists of resolved $\cC$-stratifications $\mathcal W_s$ on its strata $P_s$,
as in Definition~\ref{def:Cstr}, with the following compatibility data:
\begin{enumerate}
 \item In a chart $V\times Y$ centered on $P_s$, the incident decorations
 determine diagrams on $V\times\partial_+Y$ and $V\times\partial_-Y$.
 Their source and target data agree with the labels prescribed along $P_s$.
 All restrictions to common lower faces agree through the specified collar
 and incidence maps, with coherent compatibility on chart overlaps.
 \item At a structural codimension-one junction, the normal $1$-local brane
 is represented by a rooted tree with
 $\partial_+Y_1=I$ and $\partial_-Y_1=\{*\}$. Its incoming labels are the
 family $(l(i))_{i\in I}$, and the decoration includes an equivalence
 \[
 l(\partial_-Y_1)\simeq\bigotimes_{i\in I}l(i).
 \]
 The empty input gives $\one_{\cC}$. The input ordering, port permutations,
 and the associated symmetric coherence are retained. General coefficient
 coupons carry the corresponding cells of $\cC$ and need not be invertible.
 \item At higher-codimension loci, retain the coherent diagrams for
 composition, associativity, units, symmetry, and substitution of these
 tensor operations, as supplied by the chosen coherent presentation of
 $\cC^\otimes$. Their restrictions agree on all incident faces and corners.
\end{enumerate}

Define $\Lasa_n^{\cC,\otimes}$ from compact collared lasagna bordisms by
gluing, disjoint union, and categorical completion. Include the coherent
local relations for coefficient composition and identities, invertible
tensor junctions, and vacuum erasure. Only pure unit-labeled identity
diagrams are vacuum: a nonidentity coupon $\one_{\cC}\to\one_{\cC}$ is
retained, even when its visible support is an isolated point. Its
categorical degree is part of the presentation.

As in Definition~\ref{def:Cstr}, comparisons above degree $n$ are
parameter families with coherent reversibility. All framings, local
resolutions, and higher presentation data are retained.
\end{definition}

\begin{center}

\tikzset{every picture/.style={line width=0.75pt}} 

\begin{adjustbox}{max totalsize={\linewidth}{0.86\textheight}}
\begin{tikzpicture}[x=0.75pt,y=0.75pt,yscale=-1,xscale=1]

\draw  [draw opacity=0][fill={rgb, 255:red, 184; green, 233; blue, 134 }  ,fill opacity=1 ] (360,140) -- (408.97,140) -- (420,170) -- (371.03,170) -- cycle ;
\draw  [draw opacity=0][fill={rgb, 255:red, 144; green, 19; blue, 254 }  ,fill opacity=1 ] (381,100) -- (430,100) -- (409,140) -- (360,140) -- cycle ;
\draw  [draw opacity=0][fill={rgb, 255:red, 208; green, 2; blue, 27 }  ,fill opacity=1 ] (360.4,140) -- (409,140) -- (357.6,170) -- (309,170) -- cycle ;
\draw [color={rgb, 255:red, 74; green, 144; blue, 226 }  ,draw opacity=1 ]   (130,120) -- (180,120) ;
\draw [color={rgb, 255:red, 208; green, 2; blue, 27 }  ,draw opacity=1 ]   (80.5,120.05) -- (130,120) ;
\draw  [fill={rgb, 255:red, 0; green, 0; blue, 0 }  ,fill opacity=1 ] (131.5,120) .. controls (131.5,119.17) and (130.83,118.5) .. (130,118.5) .. controls (129.17,118.5) and (128.5,119.17) .. (128.5,120) .. controls (128.5,120.83) and (129.17,121.5) .. (130,121.5) .. controls (130.83,121.5) and (131.5,120.83) .. (131.5,120) -- cycle ;
\draw  [draw opacity=0][fill={rgb, 255:red, 208; green, 2; blue, 27 }  ,fill opacity=1 ] (230,90) -- (260,90) -- (260,120) -- (230,150) -- cycle ;
\draw  [draw opacity=0][fill={rgb, 255:red, 184; green, 233; blue, 134 }  ,fill opacity=1 ] (260,90) -- (290,90) -- (290,150) -- (260,120) -- cycle ;
\draw  [draw opacity=0][fill={rgb, 255:red, 248; green, 231; blue, 28 }  ,fill opacity=1 ] (259.6,120) -- (290,150) -- (230,150) -- cycle ;
\draw    (260,90) -- (260,120) ;
\draw    (260,120) -- (230,150) ;
\draw    (290,150) -- (260,120) ;

\draw (124.44,94.82) node [anchor=north west][inner sep=0.75pt]    {$f$};
\draw (87.11,95.51) node [anchor=north west][inner sep=0.75pt]    {$A$};
\draw (159.78,95.29) node [anchor=north west][inner sep=0.75pt]    {$B$};
\draw (100.24,134.42) node [anchor=north west][inner sep=0.75pt]    {$A\ \xrightarrow{f} B$};
\draw (235,102.4) node [anchor=north west][inner sep=0.75pt]    {$A$};
\draw (271,102.4) node [anchor=north west][inner sep=0.75pt]    {$B$};
\draw (251,132.4) node [anchor=north west][inner sep=0.75pt]    {$C$};
\draw (257,92.4) node [anchor=north west][inner sep=0.75pt]    {$f$};
\draw (232,123.4) node [anchor=north west][inner sep=0.75pt]    {$g$};
\draw (277,122.4) node [anchor=north west][inner sep=0.75pt]    {$h$};
\draw (231,149.4) node [anchor=north west][inner sep=0.75pt]    {$A\ \xrightarrow{f} B$};
\draw (247,173.47) node [anchor=north west][inner sep=0.75pt]  [rotate=-40.46]  {$\xrightarrow[g]{}$};
\draw (253.67,188.73) node [anchor=north west][inner sep=0.75pt]    {$C$};
\draw (256.44,181.05) node [anchor=north west][inner sep=0.75pt]  [rotate=-312.36]  {$\xrightarrow[h]{}$};
\draw (351,142.4) node [anchor=north west][inner sep=0.75pt]    {$A$};
\draw (391,152.4) node [anchor=north west][inner sep=0.75pt]    {$B$};
\draw (371,112.4) node [anchor=north west][inner sep=0.75pt]    {$A\otimes B$};
\draw (251.67,169.07) node [anchor=north west][inner sep=0.75pt]    {$\Uparrow $};

\end{tikzpicture}
\end{adjustbox}

\end{center}

Thickened embeddings of lasagnas motivate the following \emph{stable realization
condition}: every compact collared lasagna admits a compatible ambient
$B^m\cC$-stratified realization after sufficiently many stabilizations,
and the space of such stabilized choices is weakly contractible. Require
this statement for parameter families, relative to prescribed boundary
and parameter collars, and compatibly with gluing, disjoint union, and the
specified local presentation relations. 

\begin{remark}
  The name suggests a connection with skein lasagna modules
  \cite{MorrisonWalkerWedrich}. The lasagnas defined here are intended to be
  intrinsic (or ``transparent''): they do not depend on a chosen embedding
  into an ambient manifold, or on a choice of embedding in $\bR^\infty$.
\end{remark}

\begin{theorem}[Lasagna Cobordism Hypothesis]\label{thm:lch}
Let $(\cC,\otimes)\in\SMCat_n$. Assume the pointed generalized CH, the
loop--deloop compatibility above, and the stable realization condition.
Then there is an equivalence of symmetric monoidal $(\infty,n)$-categories
\[
 \Lasa_n^{\cC,\otimes}
 \simeq
 \colim_{m\to\infty}\End^m\Bord_{n+m}^{B^m\cC,*}.
\]
Consequently, for every $\cD\in\SMCat_n$, there is a natural equivalence
of spaces
\[
 \rFun^\otimes(\Lasa_n^{\cC,\otimes},\cD)
 \simeq
 \rFun^\otimes(\cC,\cD^{\fd}).
\]
\end{theorem}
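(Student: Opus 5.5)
The plan has two parts: first compare categories, then compare universal properties. For the first, I would build a comparison functor
\[
 \Phi:\colim_{m\to\infty}\End^m\Bord_{n+m}^{B^m\cC,*}\longrightarrow\Lasa_n^{\cC,\otimes}
\]
by vacuum erasure. A morphism of $\End^m\Bord_{n+m}^{B^m\cC,*}$ is a framed $(n+m)$-bordism, all of whose boundary faces below degree $m$ are empty, carrying a $B^m\cC$-stratification as in Definition~\ref{def:Cstr}. Cells of $B^m\cC$ below degree $m$ label the pure vacuum strata; the pushout defining $\Bord^{\cC,c_*}$ identifies these with the empty configuration. Deleting their open dual strata leaves a support locally modeled on $PD(K_{\geq m})$, which is exactly the stabilized local brane of Definition~\ref{def:local-brane}. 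The surviving labels are cells of $\cC$, and the codimension-$m$ junctions become the tensor junctions of Definition~\ref{def:lasagna}. The ambient framing descends to a stable framing on the reference bundle $TB$.

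I would check that $\Phi$ is compatible with the stabilization maps in~\eqref{eq:lasagna-stabilization}. Suspending the ambient dimension changes only the stabilized framing, not the reduced support. The compatibility on gluing, disjoint union and the local relations is local, so it suffices to check it chart by chart on the opetopic models.

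Next I would show $\Phi$ is an equivalence using the stable realization condition. Essential surjectivity on every categorical degree, and on parameter families, follows from the existence part: every compact collared lasagna, relative to a prescribed boundary, admits an ambient $B^m\cC$-stratified realization after stabilization. Full faithfulness at the level of mapping spaces follows from weak contractibility of the space of realizations. For two realizations of the same lasagna, stabilize until both live in a common $m$; the contractible choice space then produces a path between them in the colimit, together with all higher coherences, relative to collars. Formally, I would present $\Lasa_n^{\cC,\otimes}$ and the colimit as complete Segal-type objects and show that $\Phi$ induces levelwise equivalences. Each level is the space of realizations fibered over the lasagna space with contractible fibres, and filtered colimits commute with these finite limits. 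The compatibility of realization with gluing ensures that the Segal maps match.

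The universal property then follows formally:
\[
 \rFun^\otimes(\Lasa_n^{\cC,\otimes},\cD)\simeq\lim_m\rFun^\otimes(\End^m\Bord_{n+m}^{B^m\cC,*},\cD)\simeq\lim_m\rFun_{\operatorname{Alg}_{\cE_m}(\Cat_n)}(\cC,\cD^{\fd}).
\]
The second equivalence uses the loop--deloop compatibility together with the pointed generalized CH, via Theorem~\ref{thm:gch} and the pushout. Finally, $\lim_m\operatorname{Alg}_{\cE_m}\simeq\operatorname{Alg}_{\cE_\infty}$, since $\cE_\infty=\colim\cE_m$, and the transition maps are restrictions by construction. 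This gives $\rFun^\otimes(\cC,\cD^{\fd})$.

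I expect the main obstacle to be full faithfulness of $\Phi$ in families. Vacuum erasure forgets the ambient normal data of the deleted strata. One must check that the retained normal germs and ports of Definition~\ref{def:local-brane} record exactly enough of this data, so that the fibre of $\Phi$ is the realization space and not something larger, such as a space of extra framings on vacuum regions. My first attempt would be to show locally, on each $PD(K_{\geq m})$, that the space of vacuum fillings compatible with given ports is a space of framed collars. That space is contractible after stabilization by the usual Whitney-type argument, and the result would then globalize by descent over the chart atlas.
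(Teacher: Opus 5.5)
Your proposal is correct and follows the paper's proof. Vacuum erasure gives compatible reduction functors out of the colimit, and the stable realization condition (existence plus weakly contractible stabilized choice spaces, in relative families and compatibly with gluing) makes that reduction an equivalence after completion; mapping out of the colimit, with the $\cE_m$-universal properties and $\lim_m\operatorname{Alg}_{\cE_m}\simeq\operatorname{Alg}_{\cE_\infty}$, then gives the second statement, while the obstacle you single out (that the fibre of $\Phi$ is exactly the contractible realization space, with no extra vacuum framings) needs no separate local argument here, since the paper builds precisely that contractibility into the stable realization hypothesis.
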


\begin{proof}
In the $m$-fold endomorphism category, all lower boundary data are fixed
at the vacuum. Deleting the open strata of codimension less than $m$
leaves the reduced local branes and their $\cC$-decorations. This defines
compatible reduction functors to $\Lasa_n^{\cC,\otimes}$ and hence a
functor from the displayed colimit.

Conversely, the stable realization condition supplies an ambient
$B^m\cC$-stratified representative of each lasagna, compatibly on its
collars. Its relative family version supplies the corresponding lifts
of morphisms and all higher homotopies. The contractibility of the
stabilized choice spaces makes these lifts independent of choices.
Reduction and realization are therefore inverse after completion,
proving the first equivalence.

Mapping out of the colimit and using the compatible $\cE_m$-universal
properties gives
\[
\begin{aligned}
 \rFun^\otimes(\Lasa_n^{\cC,\otimes},\cD)
 &\simeq
 \lim_m\rFun_{\operatorname{Alg}_{\cE_m}(\Cat_n)}
       (\cC,\cD^{\fd})\\
 &\simeq\rFun^\otimes(\cC,\cD^{\fd}),
\end{aligned}
\]
where both coefficient structures in the limit are restricted to $\cE_m$.
\end{proof}

\subsection{\texorpdfstring{Pontryagin--Thom Isomorphism and Algebraic $K$-Theory}{Pontryagin--Thom Isomorphism and Algebraic K-Theory}}\label{subsec:pt}

Let $\cC$ be a symmetric monoidal $(\infty,n)$-category. In this section,
$K(\cC)$ denotes the connective spectrum associated with the group
completion of the $\cE_\infty$-space $\cC^{\cong}$. Thus
\[
 \Omega^\infty K(\cC)\simeq(\cC^{\cong})^{\mathrm{gp}}.
\]
This definition uses the symmetric monoidal core; it does not additionally
encode an unspecified exact or Waldhausen structure.

For an $(\infty,n)$-category $\cD$, write $|\cD|$ for its groupoid
completion, obtained by inverting all morphisms. If $\cD$ is symmetric
monoidal, this is an $\cE_\infty$-space. When it is grouplike, let
$\operatorname{sp}|\cD|$ denote the corresponding connective spectrum,
not the suspension spectrum of its underlying space.

The classical Pontryagin--Thom construction relates bordism classes to
homotopy groups of a Thom spectrum. The lasagna analogue below gives a
spectrum-level identification. Let $\Xi_d^{\cC,\otimes}$ denote the
bordism classes of closed stable framed $d$-dimensional
$(\cC^{\cong},\otimes)$-lasagnas, using the prescribed lasagna relations
and $(d+1)$-dimensional lasagna bordisms.

\begin{theorem}[Generalized Pontryagin--Thom Isomorphism]\label{thm:gPT}
Let $n\geq1$ and $\cC\in\SMCat_n$, and assume
Theorem~\ref{thm:lch}. Then there is a natural equivalence of connective
spectra
\[
 PT:K(\cC)\xrightarrow{\;\simeq\;}
 \operatorname{sp}\bigl|\Lasa_n^{\cC^{\cong},\otimes}\bigr|.
\]
Equivalently,
\[
 \Omega^\infty K(\cC)
 \simeq\bigl|\Lasa_n^{\cC^{\cong},\otimes}\bigr|.
\]
Suppose, in addition, that the completed geometric model has the following
relative representation property: based spheres in degrees $d<n$ are
represented by closed $d$-lasagnas, and their based homotopies are
represented, relative to the endpoints, by $(d+1)$-lasagna bordisms.
Then, for $0\leq d<n$,
\[
 \Xi_d^{\cC,\otimes}
 \cong\pi_d\bigl|\Lasa_n^{\cC^{\cong},\otimes}\bigr|
 \cong\pi_dK(\cC).
\]
\end{theorem}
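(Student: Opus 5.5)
The plan is a Yoneda argument in connective spectra, fed by Theorem~\ref{thm:lch} and the group-completion universal property~\eqref{eq:bg-K}. For a connective spectrum $E$, let $\mathbb P(E)$ be $\Omega^\infty E$ regarded as a symmetric monoidal $(\infty,n)$-category with only invertible morphisms and with the additive Picard structure.

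Every object of $\mathbb P(E)$ has a tensor dual, namely its additive inverse. Every morphism is invertible, so it is its own left and right adjoint. Hence $\mathbb P(E)^{\fd}=\mathbb P(E)$.

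Apply Theorem~\ref{thm:lch} with coefficients $\cC^{\cong}$ and target $\mathbb P(E)$. This gives
\[
 \rFun^\otimes(\Lasa_n^{\cC^{\cong},\otimes},\mathbb P(E))
 \simeq\rFun^\otimes(\cC^{\cong},\mathbb P(E))
 \simeq\rMap_{E_\infty}(\cC^{\cong},\Omega^\infty E)
 \simeq\rMap_{\mathrm{Sp}_{\geq0}}(K(\cC),E).
\]
On the left side, every symmetric monoidal functor into a symmetric monoidal $\infty$-groupoid factors uniquely through the groupoid completion. This gives
\[
 \rFun^\otimes(\Lasa_n^{\cC^{\cong},\otimes},\mathbb P(E))
 \simeq\rMap_{E_\infty}\bigl(|\Lasa_n^{\cC^{\cong},\otimes}|,\Omega^\infty E\bigr).
\]
The lasagna category has all $n$-duals, so $|\Lasa_n^{\cC^{\cong},\otimes}|$ is grouplike by the remark at the end of Section~\ref{sec:bg}. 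Therefore the right side is $\rMap_{\mathrm{Sp}_{\geq0}}(\operatorname{sp}|\Lasa_n^{\cC^{\cong},\otimes}|,E)$.

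Both sides are natural in $E$. Yoneda in $\mathrm{Sp}_{\geq0}$ then gives the equivalence $PT$. Applying $\Omega^\infty$ gives the unstable form.

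The step I expect to need the most care is naturality in $E$. The equivalence of Theorem~\ref{thm:lch} is stated only as an equivalence of mapping spaces for each $\cD$, and I need it to be functorial in $\cD$ as $\cD$ ranges over the full subcategory of Picard targets $\mathbb P(E)$. I would get this by producing it from a single universal map. Theorem~\ref{thm:lch} provides a comparison $\cC\to U\Lasa_n^{\cC,\otimes}$, restricting along the generating coupons $j_O$ assembled in the colimit of Theorem~\ref{thm:gch}. Restriction along a fixed map is automatically natural. I would also check that restricting to $\cC^{\cong}$ loses nothing: the Picard targets $\mathbb P(E)$ have only invertible morphisms, so symmetric monoidal functors $\cC\to\mathbb P(E)$ and $\cC^{\cong}\to\mathbb P(E)$ agree.

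For the second part, $\Xi_d^{\cC,\otimes}\to\pi_d|\Lasa_n^{\cC^{\cong},\otimes}|$ sends a closed $d$-lasagna, read as a $d$-endomorphism of the iterated unit, to its class. By the assumed relative representation property this map is surjective. It is injective because based homotopies are represented by $(d+1)$-lasagna bordisms relative to endpoints, which are exactly the bordism relations defining $\Xi_d$.

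The range $d<n$ is what allows this reading. There, the based $d$-sphere lies within the categorical degrees of the $(\infty,n)$-category, so every class comes from an honest $d$-morphism rather than a parameter family. Composing with $\pi_d$ of $PT$ then gives the final isomorphism with $\pi_dK(\cC)$.
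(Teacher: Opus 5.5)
Your argument is the paper's proof: a Yoneda argument in connective spectra over Picard targets $\mathbb P(E)$ with $\mathbb P(E)^{\fd}=\mathbb P(E)$, Theorem~\ref{thm:lch} applied to the coefficients $\cC^{\cong}$, groupoid completion on the lasagna side and group completion on the coefficient side, and then the relative representation property for the statement about $\Xi_d$. Your extra care about naturality in $E$, obtained by restricting along a single coefficient map, only makes explicit what the paper asserts.

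One side remark is false and should be dropped. Symmetric monoidal functors $\cC\to\mathbb P(E)$ and $\cC^{\cong}\to\mathbb P(E)$ do not agree in general, because functors from $\cC$ into an $\infty$-groupoid factor through $|\cC|$, not through $\cC^{\cong}$. For example, the zero object of $\Vect_F$ makes $|\Vect_F|$ contractible. The universal map you want is therefore the one for the coefficients $\cC^{\cong}$ themselves, namely $\cC^{\cong}\to U\Lasa_n^{\cC^{\cong},\otimes}$. Your main chain never uses the false claim.

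A smaller slip: an invertible morphism has its inverse, not itself, as its left and right adjoint.
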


\begin{proof}
Put $\cA=\Lasa_n^{\cC^{\cong},\otimes}$. Its object duals make $|\cA|$
grouplike: evaluation becomes a path from $a^\vee\otimes a$ to the unit,
so $[a^\vee]+[a]=0$ in $\pi_0|\cA|$. Hence
$\operatorname{sp}|\cA|$ is defined.

For a connective spectrum $E$, regard $\Omega^\infty E$ as its additive
Picard $\infty$-groupoid. Every object and morphism has the required duals,
so $(\Omega^\infty E)^{\fd}=\Omega^\infty E$. By groupoid completion,
Theorem~\ref{thm:lch}, and group completion, respectively,
\[
\begin{aligned}
 \rMap_{\mathrm{Sp}_{\geq0}}(\operatorname{sp}|\cA|,E)
 &\simeq\rFun^\otimes(\cA,\Omega^\infty E)\\
 &\simeq\rFun^\otimes(\cC^{\cong},\Omega^\infty E)\\
 &\simeq\rMap_{\operatorname{Alg}_{\cE_\infty}(\Spc)}
               (\cC^{\cong},\Omega^\infty E)\\
 &\simeq\rMap_{\mathrm{Sp}_{\geq0}}(K(\cC),E).
\end{aligned}
\]
These equivalences are natural in $E$. Yoneda gives $PT$; equivalently,
$PT$ is induced by the coefficient functor and group completion. Thus
\[
 \pi_d|\cA|\cong\pi_dK(\cC)
 \qquad\text{for every }d\geq0.
\]
In the range $0\leq d<n$, the additional relative representation property
makes the map $\Xi_d^{\cC,\otimes}\to\pi_d|\cA|$ surjective on
representatives and injective on bordism classes. This proves the last
assertion. 
\end{proof}

Thus, once the stated geometric realization properties are established,
lasagna bordism classes give geometric representatives for algebraic
$K$-groups, even when computing either side remains difficult.

\section{\texorpdfstring{Examples: Vector Spaces and the $K$-Theory of a Field}{Examples: Vector Spaces and the K-Theory of a Field}}\label{sec:examples}
Let $\Vect_F$ denote the category of finite-dimensional vector spaces over a
field $F$. The symmetric monoidal groupoid
$(\Vect_F^{\cong},\oplus,0)$ has group-completion spectrum $K(F)$
\cite[Chapter IV]{WeibelKbook}. Throughout this section we use the generalized
Pontryagin--Thom comparison of Theorem~\ref{thm:gPT}, including its stated
geometric realization hypotheses. Thus, in the stable framed model,
\[
 \mathrm{PT}_d:\Xi_d^{\Vect_F,\oplus}\xrightarrow{\cong}K_d(F).
\]
For a fixed categorical dimension $n$, the interpretation by closed
$d$-lasagnas and $(d+1)$-dimensional bordisms is used in the range $d<n$.
Our aim is to describe low-dimensional representatives and the geometric
moves suggested by the algebra. All groups of matrices below are discrete,
and $\GL(F)=\colim_r\GL_r(F)$ denotes the stable general linear group.
Whenever a proposed surgery is used as a framed bordism, its framing must
extend over the surgery trace; an oriented filling alone does not verify
this requirement.

\subsection{Lasagna and Local Systems}\label{subsec:local-systems}
A $(\Vect_F^{\cong},\oplus)$-decoration is a local system on each smooth
piece, with direct-sum identifications at the branches. We call this a
\emph{branched local system}; it need not be locally constant on the entire
singular support.

\begin{definition}[Local system on a branefold]
Let $(B,\mathcal P)$ be a resolved branefold with its specified ports and
collars. A branched $F$-local system consists of finite-rank $F$-local
systems $L_s$ on its strata, together with the following data.
\begin{enumerate}
 \item In a chart $V\times Y$ centered on $P_s$, identify the local system
 on $P_s$ with the restriction of the outgoing decoration on
 $V\times\partial_-Y$. All restrictions use the given collar germs.
 \item At a structural codimension-one junction with
 $\partial_+Y_1=I$ and $\partial_-Y_1=\{*\}$, specify an isomorphism
 \[
   L_{\partial_-Y_1}\xrightarrow{\cong}
   \bigoplus_{i\in I}L_i.
 \]
 The empty sum is the zero local system. More generally, a coefficient
 coupon with several incoming and outgoing branches is labeled by an
 isomorphism between their direct sums. Port orderings, permutations,
 and their symmetric monoidal coherence are retained.
 \item On overlaps and at higher-codimension loci, require the induced
 composites to agree with the prescribed coefficient coherence. There
 are no independent nonidentity higher coefficient morphisms in the
 ordinary groupoid $\Vect_F^{\cong}$. 
\end{enumerate}
An isomorphism of decorations is a family of stratumwise local-system
isomorphisms commuting with all these identifications.
\end{definition}

\begin{proposition}\label{prop:ex-local-systems}
The space of $(\Vect_F^{\cong},\oplus)$-lasagna decorations on a fixed
resolved framed branefold is equivalent to the groupoid of branched
$F$-local systems on it.
\end{proposition}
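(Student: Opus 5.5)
The plan is to compare the two sides stratum by stratum, using that $\Vect_F^{\cong}$ is an ordinary $1$-groupoid. I would unwind Definition~\ref{def:lasagna} in three layers, matching each layer with the corresponding clause in the definition of a branched local system.

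\emph{First}, on a single smooth stratum $P_s$, a resolved $\Vect_F^{\cong}$-stratification in the sense of Definition~\ref{def:Cstr} assigns objects to codimension-zero pieces and invertible morphisms to codimension-one walls. Since every cell is invertible, every cell is marked. By Theorem~\ref{thm:u-opetope}, together with the colimit description used in Theorem~\ref{thm:gch}, these decorations are classified by $\rFun(\Pi_\infty(P_s),\Vect_F^{\cong})$. Here localizing all cells turns the stratified diagram into a functor on the fundamental $\infty$-groupoid of the stratum. Because the target is a $1$-groupoid, this space is the groupoid of functors $\Pi_1(P_s)\to\Vect_F^{\cong}$, that is, the groupoid of finite-rank local systems $L_s$. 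Coupons on walls become the parallel-transport isomorphisms across the walls. The retained framing data contribute nothing, because the target has no nontrivial higher cells.

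\emph{Second}, I would match the compatibility clauses one at a time. Clause~(1) of Definition~\ref{def:lasagna} restricts the decoration to $V\times\partial_\pm Y$ and identifies the result with the labels along $P_s$. This is exactly clause~(1) for local systems. Clause~(2) gives an equivalence $l(\partial_-Y_1)\simeq\bigotimes_i l(i)$; with $\otimes=\oplus$, this is the isomorphism $L_{\partial_-Y_1}\cong\bigoplus_i L_i$, and the empty input gives $0$. General coupons with several incoming and outgoing branches are cells of $\Vect_F^{\cong}$ between direct sums, so they are isomorphisms of direct sums. Clause~(3) asks for coherent diagrams at higher-codimension loci. Since mapping spaces in $\Vect_F^{\cong}$ are discrete sets, a coherent diagram there is the same as a commuting diagram. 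The space of such coherence data is therefore either contractible or empty, and it is nonempty exactly when the composites agree. This is clause~(3) for local systems.

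\emph{Third}, I would assemble the layers. The decoration space is an iterated homotopy limit, over the stratum poset and the chart overlaps, of these local pieces. Each piece is a $1$-groupoid, so the limit is also a $1$-groupoid. Its objects are the branched local systems. Its morphisms are stratumwise isomorphisms commuting with the junction and port identifications, which is the stated notion of isomorphism. Vacuum erasure only removes zero-labelled identity data, so it does not change this comparison.

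The main obstacle is the first step. The identification of a $\cC$-stratification on a stratum with a functor out of $\Pi_\infty(P_s)$ rests on the coherent mesh descent invoked in Theorem~\ref{thm:gch}, and on the marked passage relations of Definition~\ref{def:Oustr}. I would try to sidestep full descent by exploiting $1$-truncation. It suffices to check that the comparison induces a bijection on isomorphism classes and on automorphism groups. Both can be computed from one fixed cut mesh: walls give transition functions, and the relations from codimension-two loci give the cocycle condition. The $2$-truncated part of descent is enough for this.
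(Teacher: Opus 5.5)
This is essentially the paper's proof. Decorations on each smooth piece are $\Pi_1$-representations in $\Vect_F^{\cong}$, junction data are isomorphisms onto direct sums, and the homotopy limit over pieces and incidence maps imposes the same compatibilities on both sides. Your remarks that every cell is marked, that coherence data in a $1$-groupoid are a property, and that the limit stays $1$-truncated fill in details the paper leaves implicit.

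Like the paper, you take the stratumwise identification as an input. That input comes from the definition of a $\cC$-stratification together with descent, not from Theorems~\ref{thm:u-opetope} and~\ref{thm:gch}, which classify field theories rather than decorations on a fixed branefold. Your proposed sidestep via $\pi_0$ and $\pi_1$ on a fixed mesh would not remove this input, since the $1$-truncatedness of the geometric decoration space is itself a consequence of it.
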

\begin{proof}
On a connected smooth piece, an object of
$\operatorname{Fun}(\Pi_1(-),\Vect_F^{\cong})$ is precisely a finite-rank
local system. At a junction, a symmetric monoidal coefficient diagram
specifies exactly an isomorphism between the indicated direct sums.
Taking the homotopy limit over the pieces and their incidence maps imposes
the same compatibility on both sides, including isomorphisms of
labels. Hence the two decoration groupoids agree.
\end{proof}

Write $\Xi_d^{\Loc_F}$ for the stable framed cobordism group of closed
$d$-branefolds with these decorations. Proposition~\ref{prop:ex-local-systems}
gives
\[
 \Xi_d^{\Loc_F}\cong\Xi_d^{\Vect_F,\oplus}.
\]
For example, the junction $F,F\to F^2$ is permitted. An ordinary local
system on a connected $Y$-graph would have constant rank and would not
allow this decoration.

\subsection{\texorpdfstring{Rag\`u, $K_0(F)$, Spaghetti, and $K_1(F)$}{K0(F), Spaghetti, and K1(F)}}\label{subsec:k01}

\begin{proposition}[Signed dimension]
There is an isomorphism
\[
 \dim:\Xi_0^{\Loc_F}\xrightarrow{\cong}\bZ,
 \qquad
 [(V_i,\epsilon_i)_{i\in I}]
 \longmapsto\sum_{i\in I}\epsilon_i\dim_FV_i.
\]
\end{proposition}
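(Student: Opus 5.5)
Closed $0$-branefolds are finite sets of stably framed points. The stable framing of a point is a sign $\epsilon_i\in\{\pm1\}$, and the decoration is a vector space $V_i$ by Proposition~\ref{prop:ex-local-systems}. I would establish the isomorphism in two independent ways and check that they agree. The first is formal: combine Theorem~\ref{thm:gPT} in degree $d=0<n$ with $K_0(F)\cong\bZ$ via $\dim$ \cite{WeibelKbook}. The second is geometric and direct.

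\emph{Well-definedness.} I would show that $\dim$ is invariant under $1$-dimensional lasagna bordisms. A compact $1$-branefold is a graph whose edges carry local systems, i.e.\ vector spaces up to monodromy. Its vertices are structural junctions with isomorphisms $L_{\partial_-}\cong\bigoplus_i L_i$, or invertible coupons, which are isomorphisms between direct sums. Along each edge the framing sign is transported, and rank is locally constant. I would assign to each boundary endpoint the signed rank with sign given by the outward framing. At every internal vertex, the outgoing rank equals the sum of the incoming ranks, because the label is an isomorphism of direct sums. Summing these local conservation laws over the graph shows that the total signed rank at the boundary vanishes; this is a discrete Stokes argument. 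The relations in Definition~\ref{def:lasagna} must also be checked. Vacuum erasure removes only rank-$0$ pieces. Composition and identity substitutions preserve ranks. So $\dim$ descends to $\Xi_0^{\Loc_F}$. Disjoint union maps to addition, so $\dim$ is a homomorphism.

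\emph{Surjectivity.} The positively framed point labeled $F$ maps to $1$, and the negatively framed point labeled $F$ maps to $-1$.

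\emph{Injectivity.} I would proceed by explicit bordisms, in three steps.
\begin{enumerate}
\item A point $(V,\epsilon)$ is bordant to the $\dim V$ points $(F,\epsilon)$. Choose a basis $V\cong F^{\dim V}$, and use a tree with one branch junction labeled by this iso, as in the example $F,F\to F^2$.
\item The pair $(F,+)\sqcup(F,-)$ is null-bordant via a framed arc, the evaluation cup, carrying the constant local system $F$.
\item Consequently, a configuration of signed total dimension $0$ can first be split into lines and then paired off by cups, so its class is $0$.
\end{enumerate}
The zero space is the empty sum and is vacuum, so it is consistent with all of this.

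\emph{Main obstacle.} The hardest step is the invariance under bordism in the well-definedness argument. There I must make sure that the framing sign carried along an edge through junctions and bends agrees with the boundary signs. For instance, a U-turn edge must connect opposite signs, and a junction must preserve the normal framing of the branches. I would verify this from the local brane models of Definition~\ref{def:local-brane}, where the reference tangent line is continuous across the tree branches. Once that is settled, the formal check via Theorem~\ref{thm:gPT} confirms the result.
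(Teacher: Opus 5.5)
Your proposal is correct and follows the paper's route. Junctions reduce each point to copies of $(F,\pm)$, the vacuum handles $V=0$, and cups give $[F,-]=-[F,+]$; since signed dimension respects these relations, $\dim$ is inverse to $r\mapsto r[F]$. Your discrete Stokes argument (rank conservation at junctions and coupons, with the framing sign carried along edges) spells out the bordism invariance that the paper only asserts, and the cross-check via Theorem~\ref{thm:gPT} is not needed for the direct argument.
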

\begin{proof}
The direct-sum junction imposes $[V\oplus W]=[V]+[W]$, the vacuum
relation gives $[0]=0$, and a cup identifies a negative point with the
additive inverse of its positive counterpart. Since $V\cong F^{\dim V}$,
every class is an integer multiple of the positively framed point labeled
by $F$. Signed dimension preserves these relations and sends this generator
to $1$, so it is inverse to $r\mapsto r[F]$.
\end{proof}

\begin{center}

\tikzset{every picture/.style={line width=0.75pt}} 

\begin{adjustbox}{max totalsize={\linewidth}{0.86\textheight}}
\begin{tikzpicture}[x=0.75pt,y=0.75pt,yscale=-1,xscale=1]

\draw  [fill={rgb, 255:red, 0; green, 0; blue, 0 }  ,fill opacity=1 ] (96.24,76.4) .. controls (96.24,75.09) and (97.3,74.02) .. (98.61,74.02) .. controls (99.93,74.02) and (100.99,75.09) .. (100.99,76.4) .. controls (100.99,77.71) and (99.93,78.77) .. (98.61,78.77) .. controls (97.3,78.77) and (96.24,77.71) .. (96.24,76.4) -- cycle ;
\draw  [fill={rgb, 255:red, 0; green, 0; blue, 0 }  ,fill opacity=1 ] (92.25,121.38) .. controls (92.25,120.06) and (93.31,119) .. (94.63,119) .. controls (95.94,119) and (97,120.06) .. (97,121.38) .. controls (97,122.69) and (95.94,123.75) .. (94.63,123.75) .. controls (93.31,123.75) and (92.25,122.69) .. (92.25,121.38) -- cycle ;
\draw  [fill={rgb, 255:red, 0; green, 0; blue, 0 }  ,fill opacity=1 ] (95.82,107.75) .. controls (95.82,106.44) and (96.88,105.37) .. (98.19,105.37) .. controls (99.5,105.37) and (100.57,106.44) .. (100.57,107.75) .. controls (100.57,109.06) and (99.5,110.12) .. (98.19,110.12) .. controls (96.88,110.12) and (95.82,109.06) .. (95.82,107.75) -- cycle ;
\draw  [fill={rgb, 255:red, 0; green, 0; blue, 0 }  ,fill opacity=1 ] (91.75,145.38) .. controls (91.75,144.06) and (92.81,143) .. (94.13,143) .. controls (95.44,143) and (96.5,144.06) .. (96.5,145.38) .. controls (96.5,146.69) and (95.44,147.75) .. (94.13,147.75) .. controls (92.81,147.75) and (91.75,146.69) .. (91.75,145.38) -- cycle ;
\draw  [draw opacity=0] (101.08,76.4) .. controls (101.12,76.4) and (101.15,76.4) .. (101.19,76.4) .. controls (117.76,76.4) and (131.19,83.42) .. (131.19,92.07) .. controls (131.19,100.73) and (117.76,107.75) .. (101.19,107.75) .. controls (100.98,107.75) and (100.77,107.75) .. (100.57,107.75) -- (101.19,92.07) -- cycle ; \draw   (101.08,76.4) .. controls (101.12,76.4) and (101.15,76.4) .. (101.19,76.4) .. controls (117.76,76.4) and (131.19,83.42) .. (131.19,92.07) .. controls (131.19,100.73) and (117.76,107.75) .. (101.19,107.75) .. controls (100.98,107.75) and (100.77,107.75) .. (100.57,107.75) ;  
\draw    (94.63,121.38) .. controls (110.55,127.37) and (123.2,121.25) .. (137.2,113.25) .. controls (151.2,105.25) and (151.7,79.75) .. (175.7,79.25) ;
\draw    (94.13,145.38) .. controls (107.2,136.75) and (110.2,128.25) .. (137.2,113.25) ;
\draw  [fill={rgb, 255:red, 0; green, 0; blue, 0 }  ,fill opacity=1 ] (171.74,114.9) .. controls (171.74,113.59) and (172.8,112.52) .. (174.11,112.52) .. controls (175.43,112.52) and (176.49,113.59) .. (176.49,114.9) .. controls (176.49,116.21) and (175.43,117.27) .. (174.11,117.27) .. controls (172.8,117.27) and (171.74,116.21) .. (171.74,114.9) -- cycle ;
\draw  [fill={rgb, 255:red, 0; green, 0; blue, 0 }  ,fill opacity=1 ] (171.32,146.25) .. controls (171.32,144.94) and (172.38,143.87) .. (173.69,143.87) .. controls (175,143.87) and (176.07,144.94) .. (176.07,146.25) .. controls (176.07,147.56) and (175,148.62) .. (173.69,148.62) .. controls (172.38,148.62) and (171.32,147.56) .. (171.32,146.25) -- cycle ;
\draw  [draw opacity=0] (173.69,146.25) .. controls (173.66,146.25) and (173.62,146.25) .. (173.59,146.25) .. controls (157.02,146.25) and (143.59,139.23) .. (143.59,130.57) .. controls (143.59,121.91) and (157.02,114.89) .. (173.59,114.89) .. controls (173.8,114.89) and (174,114.9) .. (174.21,114.9) -- (173.59,130.57) -- cycle ; \draw   (173.69,146.25) .. controls (173.66,146.25) and (173.62,146.25) .. (173.59,146.25) .. controls (157.02,146.25) and (143.59,139.23) .. (143.59,130.57) .. controls (143.59,121.91) and (157.02,114.89) .. (173.59,114.89) .. controls (173.8,114.89) and (174,114.9) .. (174.21,114.9) ;  
\draw  [fill={rgb, 255:red, 0; green, 0; blue, 0 }  ,fill opacity=1 ] (173.32,79.25) .. controls (173.32,77.94) and (174.39,76.88) .. (175.7,76.88) .. controls (177.01,76.88) and (178.07,77.94) .. (178.07,79.25) .. controls (178.07,80.56) and (177.01,81.63) .. (175.7,81.63) .. controls (174.39,81.63) and (173.32,80.56) .. (173.32,79.25) -- cycle ;

\draw (79.5,63.4) node [anchor=north west][inner sep=0.75pt]    {$V$};
\draw (66,97.4) node [anchor=north west][inner sep=0.75pt]    {$-V$};
\draw (69,116.9) node [anchor=north west][inner sep=0.75pt]    {$W$};
\draw (73.5,145.9) node [anchor=north west][inner sep=0.75pt]    {$X$};
\draw (180.5,68.4) node [anchor=north west][inner sep=0.75pt]    {$W\oplus X$};
\draw (193,106.9) node [anchor=north west][inner sep=0.75pt]    {$Y$};
\draw (181.67,140.23) node [anchor=north west][inner sep=0.75pt]    {$-Y$};

\end{tikzpicture}
\end{adjustbox}

\end{center}

A closed $1$-branefold is generally a graph with junctions, rather than
literally a wedge of circles. The structural merge, split, and duality
moves suggest circle representatives. We normalize their framing before
using them as generators.

Fix an orientation of $S^1$. Let $\tau_0$ be its \emph{bounding} stable
framing, and let $\tau_1$ be the other stable framing. The latter represents
$\eta\in\pi_1^s(\mathbb S)\cong\bZ/2$. These are stable framing classes;
a two-dimensional unstabilized framing has an integer winding number.
For $g\in\GL_r(F)$, write
\[
 L_r(g):=[S^1,\tau_0;F^r_g]\in\Xi_1^{\Loc_F},
\]
where $F^r_g$ has monodromy $g$. Using the bounding framing ensures
$L_r(I_r)=0$.

\begin{lemma}[Framing and the self-swap]\label{lem:ex-framing}
Under the Pontryagin--Thom comparison,
\[
 [S^1,\tau_1;\underline{F^r}]
 =L_{2r}\!\left(\begin{smallmatrix}0&I_r\\I_r&0\end{smallmatrix}\right)
 =L_r(-I_r).
\]
In particular, its determinant is $(-1)^r$.
\end{lemma}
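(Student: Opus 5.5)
I will prove the lemma by transporting everything through $\mathrm{PT}_1:\Xi_1^{\Loc_F}\cong K_1(F)\cong F^\times$ together with the unit map $\mathbb S\to K(F)$. The first step places the class $[S^1,\tau_1;\underline{F^r}]$. A trivial local system $\underline{F^r}$ is the pullback of the coefficient point labeled $F^r$ along the framed bordism $(S^1,\tau_1)$. By naturality of the Pontryagin--Thom comparison in the coefficient functor, this class is therefore the image of $\eta\in\pi_1^s(\mathbb S)$ under the composite $\mathbb S\to K(F)$ determined by the object $F^r$. That composite is $r$ times the unit map, and its effect on $\pi_1$ is the standard one: $\eta\mapsto[-1]\in F^\times\cong K_1(F)$. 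The identification $\pi_1\mathbb S\to\pi_1 K(\bZ)\to K_1(F)$ with $-1$ comes from $\Sigma_2\to\GL_2$, the sign representation. Hence the class goes to $(-1)^r$.

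The second step evaluates the two monodromy circles. Since $L_r(g)$ uses the bounding framing, I will check that $\mathrm{PT}_1(L_r(g))=\det g$. The argument is that $L_r(g)$ is the closure (trace) of the coupon $g$ on a bounding circle, and under $|\Lasa|\simeq\Omega^\infty K$ this is the standard map $B\GL_r(F)\to\Omega^\infty K(F)$ applied to the loop $g$. On $\pi_1$ that map is $\GL_r(F)^{\mathrm{ab}}\to K_1(F)=\GL(F)^{\mathrm{ab}}\xrightarrow{\det}F^\times$. This gives $L_{2r}$ of the swap $\mapsto\det\left(\begin{smallmatrix}0&I_r\\I_r&0\end{smallmatrix}\right)=(-1)^r$ and $L_r(-I_r)\mapsto(-1)^r$. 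Injectivity of $\mathrm{PT}_1$ then yields all three equalities, and the determinant statement follows at once.

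I would also like a direct geometric reason for the first equality. The plan is to decompose $\underline{F^r}=\underline F\oplus\dots\oplus\underline F$ via junctions, so it suffices to treat $r=1$. For $r=1$, the nonbounding framing $\tau_1$ on the circle corresponds to the symmetry (self-swap) of $\one^\vee\otimes\one$ in the bordism category. Passing through the delooping $B\cC$, in which the tensor symmetry of $F\oplus F$ becomes the swap monodromy, converts $(S^1,\tau_1;\underline F)$ into the bounding circle carrying $F^2$ with swap monodromy. This is the classical statement that $\eta$ is the image of the transposition under $B\Sigma_2\to QS^0$.

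I expect the main obstacle to be the first step. One must justify that the nonbounding framing contributes exactly the image of $\eta$, and that $\eta\mapsto-1$ rather than $1$. This needs compatibility of the Pontryagin--Thom equivalence of Theorem~\ref{thm:gPT} with the classical framed Pontryagin--Thom map for $\cC=*$, and hence with Barratt--Priddy--Quillen. I would try first to deduce this compatibility from Theorem~\ref{thm:gPT} applied to $\mathrm{Fin}^{\cong}\to\Vect_F^{\cong}$, using naturality of the Yoneda construction in $\cC$.
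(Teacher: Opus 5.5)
Your proposal is correct and follows essentially the paper's route. The paper identifies $[S^1,\tau_1;\underline{F^r}]$ with $\eta\cdot[F^r]$ and cites Muro--Tonks for $\eta[V]=[\operatorname{swap}_{V,V}]=[-\operatorname{id}_V]$ in $K_1(F)$, which is the content of your linearity-in-$[V]$ plus unit-map (Barratt--Priddy--Quillen) step. It then compares determinants through the Pontryagin--Thom comparison, as you do with $\mathrm{PT}_1(L_r(g))=\det g$.

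Like you, the paper simply takes as given that the nonbounding framed circle acts by $\eta$. The only slip is terminological: $\Sigma_2\to\GL_2$ is the permutation representation, and its determinant is the sign character.
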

\begin{proof}
The nonbounding framed circle acts by the stable Hopf element. In additive
$K$-theory,
\[
 \eta[V]=[\operatorname{swap}_{V,V}]=[-\operatorname{id}_V]
 \quad\text{in }K_1(F)
\]
\cite[Corollaries 1.9--1.10]{MuroTonks}. Apply the stable
Pontryagin--Thom comparison and compute
$\det(\operatorname{swap}_{F^r,F^r})=(-1)^{r^2}=(-1)^r$.
\end{proof}
This identifies the framing correction as a cobordism class. A proposed
picture of the correction must retain the corresponding stable framing;
the underlying unframed disk cannot fill both framing classes.

\begin{center}

\tikzset{every picture/.style={line width=0.75pt}} 

\begin{adjustbox}{max totalsize={\linewidth}{0.86\textheight}}
\begin{tikzpicture}[x=0.75pt,y=0.75pt,yscale=-1,xscale=1]

\draw   (41.18,110.59) .. controls (41.18,88.83) and (58.83,71.18) .. (80.59,71.18) .. controls (102.36,71.18) and (120,88.83) .. (120,110.59) .. controls (120,132.36) and (102.36,150) .. (80.59,150) .. controls (58.83,150) and (41.18,132.36) .. (41.18,110.59) -- cycle ;
\draw   (170,110.04) .. controls (170,87.94) and (187.91,70.04) .. (210,70.04) .. controls (232.09,70.04) and (250,87.94) .. (250,110.04) .. controls (250,132.13) and (232.09,150.04) .. (210,150.04) .. controls (187.91,150.04) and (170,132.13) .. (170,110.04) -- cycle ;
\draw    (80,70) -- (98,70) ;
\draw [shift={(100,70)}, rotate = 180] [color={rgb, 255:red, 0; green, 0; blue, 0 }  ][line width=0.75]    (10.93,-3.29) .. controls (6.95,-1.4) and (3.31,-0.3) .. (0,0) .. controls (3.31,0.3) and (6.95,1.4) .. (10.93,3.29)   ;
\draw    (80,70) -- (80,50) ;
\draw [shift={(80,50)}, rotate = 90] [color={rgb, 255:red, 0; green, 0; blue, 0 }  ][line width=0.75]    (17.64,-3.29) .. controls (13.66,-1.4) and (10.02,-0.3) .. (6.71,0) .. controls (10.02,0.3) and (13.66,1.4) .. (17.64,3.29)(10.93,-3.29) .. controls (6.95,-1.4) and (3.31,-0.3) .. (0,0) .. controls (3.31,0.3) and (6.95,1.4) .. (10.93,3.29)   ;

\draw    (120,110) -- (138,110) ;
\draw [shift={(140,110)}, rotate = 180] [color={rgb, 255:red, 0; green, 0; blue, 0 }  ][line width=0.75]    (10.93,-3.29) .. controls (6.95,-1.4) and (3.31,-0.3) .. (0,0) .. controls (3.31,0.3) and (6.95,1.4) .. (10.93,3.29)   ;
\draw    (120,110) -- (120,90) ;
\draw [shift={(120,90)}, rotate = 90] [color={rgb, 255:red, 0; green, 0; blue, 0 }  ][line width=0.75]    (17.64,-3.29) .. controls (13.66,-1.4) and (10.02,-0.3) .. (6.71,0) .. controls (10.02,0.3) and (13.66,1.4) .. (17.64,3.29)(10.93,-3.29) .. controls (6.95,-1.4) and (3.31,-0.3) .. (0,0) .. controls (3.31,0.3) and (6.95,1.4) .. (10.93,3.29)   ;

\draw    (80,150) -- (98,150) ;
\draw [shift={(100,150)}, rotate = 180] [color={rgb, 255:red, 0; green, 0; blue, 0 }  ][line width=0.75]    (10.93,-3.29) .. controls (6.95,-1.4) and (3.31,-0.3) .. (0,0) .. controls (3.31,0.3) and (6.95,1.4) .. (10.93,3.29)   ;
\draw    (80,150) -- (80,130) ;
\draw [shift={(80,130)}, rotate = 90] [color={rgb, 255:red, 0; green, 0; blue, 0 }  ][line width=0.75]    (17.64,-3.29) .. controls (13.66,-1.4) and (10.02,-0.3) .. (6.71,0) .. controls (10.02,0.3) and (13.66,1.4) .. (17.64,3.29)(10.93,-3.29) .. controls (6.95,-1.4) and (3.31,-0.3) .. (0,0) .. controls (3.31,0.3) and (6.95,1.4) .. (10.93,3.29)   ;

\draw    (40,110) -- (58,110) ;
\draw [shift={(60,110)}, rotate = 180] [color={rgb, 255:red, 0; green, 0; blue, 0 }  ][line width=0.75]    (10.93,-3.29) .. controls (6.95,-1.4) and (3.31,-0.3) .. (0,0) .. controls (3.31,0.3) and (6.95,1.4) .. (10.93,3.29)   ;
\draw    (40,110) -- (40,90) ;
\draw [shift={(40,90)}, rotate = 90] [color={rgb, 255:red, 0; green, 0; blue, 0 }  ][line width=0.75]    (17.64,-3.29) .. controls (13.66,-1.4) and (10.02,-0.3) .. (6.71,0) .. controls (10.02,0.3) and (13.66,1.4) .. (17.64,3.29)(10.93,-3.29) .. controls (6.95,-1.4) and (3.31,-0.3) .. (0,0) .. controls (3.31,0.3) and (6.95,1.4) .. (10.93,3.29)   ;

\draw    (210,70.04) -- (228,70.04) ;
\draw [shift={(230,70.04)}, rotate = 180] [color={rgb, 255:red, 0; green, 0; blue, 0 }  ][line width=0.75]    (10.93,-3.29) .. controls (6.95,-1.4) and (3.31,-0.3) .. (0,0) .. controls (3.31,0.3) and (6.95,1.4) .. (10.93,3.29)   ;
\draw    (210,70.04) -- (210,50.04) ;
\draw [shift={(210,50.04)}, rotate = 90] [color={rgb, 255:red, 0; green, 0; blue, 0 }  ][line width=0.75]    (17.64,-3.29) .. controls (13.66,-1.4) and (10.02,-0.3) .. (6.71,0) .. controls (10.02,0.3) and (13.66,1.4) .. (17.64,3.29)(10.93,-3.29) .. controls (6.95,-1.4) and (3.31,-0.3) .. (0,0) .. controls (3.31,0.3) and (6.95,1.4) .. (10.93,3.29)   ;

\draw    (250,110.04) -- (250,128.04) ;
\draw [shift={(250,130.04)}, rotate = 270] [color={rgb, 255:red, 0; green, 0; blue, 0 }  ][line width=0.75]    (10.93,-3.29) .. controls (6.95,-1.4) and (3.31,-0.3) .. (0,0) .. controls (3.31,0.3) and (6.95,1.4) .. (10.93,3.29)   ;
\draw    (250,110.04) -- (270,110.04) ;
\draw [shift={(270,110.04)}, rotate = 180] [color={rgb, 255:red, 0; green, 0; blue, 0 }  ][line width=0.75]    (17.64,-3.29) .. controls (13.66,-1.4) and (10.02,-0.3) .. (6.71,0) .. controls (10.02,0.3) and (13.66,1.4) .. (17.64,3.29)(10.93,-3.29) .. controls (6.95,-1.4) and (3.31,-0.3) .. (0,0) .. controls (3.31,0.3) and (6.95,1.4) .. (10.93,3.29)   ;

\draw    (210,150.04) -- (192,150.04) ;
\draw [shift={(190,150.04)}, rotate = 360] [color={rgb, 255:red, 0; green, 0; blue, 0 }  ][line width=0.75]    (10.93,-3.29) .. controls (6.95,-1.4) and (3.31,-0.3) .. (0,0) .. controls (3.31,0.3) and (6.95,1.4) .. (10.93,3.29)   ;
\draw    (210,150.04) -- (210,170.04) ;
\draw [shift={(210,170.04)}, rotate = 270] [color={rgb, 255:red, 0; green, 0; blue, 0 }  ][line width=0.75]    (17.64,-3.29) .. controls (13.66,-1.4) and (10.02,-0.3) .. (6.71,0) .. controls (10.02,0.3) and (13.66,1.4) .. (17.64,3.29)(10.93,-3.29) .. controls (6.95,-1.4) and (3.31,-0.3) .. (0,0) .. controls (3.31,0.3) and (6.95,1.4) .. (10.93,3.29)   ;

\draw    (170,110.04) -- (170,92.04) ;
\draw [shift={(170,90.04)}, rotate = 90] [color={rgb, 255:red, 0; green, 0; blue, 0 }  ][line width=0.75]    (10.93,-3.29) .. controls (6.95,-1.4) and (3.31,-0.3) .. (0,0) .. controls (3.31,0.3) and (6.95,1.4) .. (10.93,3.29)   ;
\draw    (170,110.04) -- (150,110.04) ;
\draw [shift={(150,110.04)}, rotate = 360] [color={rgb, 255:red, 0; green, 0; blue, 0 }  ][line width=0.75]    (17.64,-3.29) .. controls (13.66,-1.4) and (10.02,-0.3) .. (6.71,0) .. controls (10.02,0.3) and (13.66,1.4) .. (17.64,3.29)(10.93,-3.29) .. controls (6.95,-1.4) and (3.31,-0.3) .. (0,0) .. controls (3.31,0.3) and (6.95,1.4) .. (10.93,3.29)   ;

\draw    (288.11,100) .. controls (329.31,98.27) and (327.98,132.27) .. (368.11,130) ;
\draw    (288.11,130) .. controls (325.31,128.93) and (331.31,98.27) .. (368.11,100) ;
\draw    (288.11,100) -- (306.11,100) ;
\draw [shift={(308.11,100)}, rotate = 180] [color={rgb, 255:red, 0; green, 0; blue, 0 }  ][line width=0.75]    (10.93,-3.29) .. controls (6.95,-1.4) and (3.31,-0.3) .. (0,0) .. controls (3.31,0.3) and (6.95,1.4) .. (10.93,3.29)   ;
\draw    (288.11,100) -- (288.11,80) ;
\draw [shift={(288.11,80)}, rotate = 90] [color={rgb, 255:red, 0; green, 0; blue, 0 }  ][line width=0.75]    (17.64,-3.29) .. controls (13.66,-1.4) and (10.02,-0.3) .. (6.71,0) .. controls (10.02,0.3) and (13.66,1.4) .. (17.64,3.29)(10.93,-3.29) .. controls (6.95,-1.4) and (3.31,-0.3) .. (0,0) .. controls (3.31,0.3) and (6.95,1.4) .. (10.93,3.29)   ;

\draw    (288.11,130) -- (306.11,130) ;
\draw [shift={(308.11,130)}, rotate = 180] [color={rgb, 255:red, 0; green, 0; blue, 0 }  ][line width=0.75]    (10.93,-3.29) .. controls (6.95,-1.4) and (3.31,-0.3) .. (0,0) .. controls (3.31,0.3) and (6.95,1.4) .. (10.93,3.29)   ;
\draw    (288.11,130) -- (288.11,110) ;
\draw [shift={(288.11,110)}, rotate = 90] [color={rgb, 255:red, 0; green, 0; blue, 0 }  ][line width=0.75]    (17.64,-3.29) .. controls (13.66,-1.4) and (10.02,-0.3) .. (6.71,0) .. controls (10.02,0.3) and (13.66,1.4) .. (17.64,3.29)(10.93,-3.29) .. controls (6.95,-1.4) and (3.31,-0.3) .. (0,0) .. controls (3.31,0.3) and (6.95,1.4) .. (10.93,3.29)   ;

\draw    (358.11,100) -- (376.11,100) ;
\draw [shift={(378.11,100)}, rotate = 180] [color={rgb, 255:red, 0; green, 0; blue, 0 }  ][line width=0.75]    (10.93,-3.29) .. controls (6.95,-1.4) and (3.31,-0.3) .. (0,0) .. controls (3.31,0.3) and (6.95,1.4) .. (10.93,3.29)   ;
\draw    (358.11,100) -- (358.11,80) ;
\draw [shift={(358.11,80)}, rotate = 90] [color={rgb, 255:red, 0; green, 0; blue, 0 }  ][line width=0.75]    (17.64,-3.29) .. controls (13.66,-1.4) and (10.02,-0.3) .. (6.71,0) .. controls (10.02,0.3) and (13.66,1.4) .. (17.64,3.29)(10.93,-3.29) .. controls (6.95,-1.4) and (3.31,-0.3) .. (0,0) .. controls (3.31,0.3) and (6.95,1.4) .. (10.93,3.29)   ;

\draw    (358.11,130) -- (376.11,130) ;
\draw [shift={(378.11,130)}, rotate = 180] [color={rgb, 255:red, 0; green, 0; blue, 0 }  ][line width=0.75]    (10.93,-3.29) .. controls (6.95,-1.4) and (3.31,-0.3) .. (0,0) .. controls (3.31,0.3) and (6.95,1.4) .. (10.93,3.29)   ;
\draw    (358.11,130) -- (358.11,110) ;
\draw [shift={(358.11,110)}, rotate = 90] [color={rgb, 255:red, 0; green, 0; blue, 0 }  ][line width=0.75]    (17.64,-3.29) .. controls (13.66,-1.4) and (10.02,-0.3) .. (6.71,0) .. controls (10.02,0.3) and (13.66,1.4) .. (17.64,3.29)(10.93,-3.29) .. controls (6.95,-1.4) and (3.31,-0.3) .. (0,0) .. controls (3.31,0.3) and (6.95,1.4) .. (10.93,3.29)   ;

\draw    (288.11,100) .. controls (244.64,88.27) and (288.02,71.13) .. (328.11,70) .. controls (368.2,68.87) and (410.64,93.6) .. (358.11,100) ;
\draw    (298.11,130) .. controls (255.98,137.6) and (288.02,161.13) .. (328.11,160) .. controls (368.2,158.87) and (404.64,139.6) .. (368.11,130) ;
\draw    (328.11,70) -- (346.11,70) ;
\draw [shift={(348.11,70)}, rotate = 180] [color={rgb, 255:red, 0; green, 0; blue, 0 }  ][line width=0.75]    (10.93,-3.29) .. controls (6.95,-1.4) and (3.31,-0.3) .. (0,0) .. controls (3.31,0.3) and (6.95,1.4) .. (10.93,3.29)   ;
\draw    (328.11,70) -- (328.11,50) ;
\draw [shift={(328.11,50)}, rotate = 90] [color={rgb, 255:red, 0; green, 0; blue, 0 }  ][line width=0.75]    (17.64,-3.29) .. controls (13.66,-1.4) and (10.02,-0.3) .. (6.71,0) .. controls (10.02,0.3) and (13.66,1.4) .. (17.64,3.29)(10.93,-3.29) .. controls (6.95,-1.4) and (3.31,-0.3) .. (0,0) .. controls (3.31,0.3) and (6.95,1.4) .. (10.93,3.29)   ;

\draw    (328.11,160) -- (346.11,160) ;
\draw [shift={(348.11,160)}, rotate = 180] [color={rgb, 255:red, 0; green, 0; blue, 0 }  ][line width=0.75]    (10.93,-3.29) .. controls (6.95,-1.4) and (3.31,-0.3) .. (0,0) .. controls (3.31,0.3) and (6.95,1.4) .. (10.93,3.29)   ;
\draw    (328.11,160) -- (328.11,140) ;
\draw [shift={(328.11,140)}, rotate = 90] [color={rgb, 255:red, 0; green, 0; blue, 0 }  ][line width=0.75]    (17.64,-3.29) .. controls (13.66,-1.4) and (10.02,-0.3) .. (6.71,0) .. controls (10.02,0.3) and (13.66,1.4) .. (17.64,3.29)(10.93,-3.29) .. controls (6.95,-1.4) and (3.31,-0.3) .. (0,0) .. controls (3.31,0.3) and (6.95,1.4) .. (10.93,3.29)   ;

\draw    (390,90) -- (408,90) ;
\draw [shift={(410,90)}, rotate = 180] [color={rgb, 255:red, 0; green, 0; blue, 0 }  ][line width=0.75]    (10.93,-3.29) .. controls (6.95,-1.4) and (3.31,-0.3) .. (0,0) .. controls (3.31,0.3) and (6.95,1.4) .. (10.93,3.29)   ;
\draw    (390,90) -- (390,70) ;
\draw [shift={(390,70)}, rotate = 90] [color={rgb, 255:red, 0; green, 0; blue, 0 }  ][line width=0.75]    (17.64,-3.29) .. controls (13.66,-1.4) and (10.02,-0.3) .. (6.71,0) .. controls (10.02,0.3) and (13.66,1.4) .. (17.64,3.29)(10.93,-3.29) .. controls (6.95,-1.4) and (3.31,-0.3) .. (0,0) .. controls (3.31,0.3) and (6.95,1.4) .. (10.93,3.29)   ;

\draw    (390,140.04) -- (408,140.04) ;
\draw [shift={(410,140.04)}, rotate = 180] [color={rgb, 255:red, 0; green, 0; blue, 0 }  ][line width=0.75]    (10.93,-3.29) .. controls (6.95,-1.4) and (3.31,-0.3) .. (0,0) .. controls (3.31,0.3) and (6.95,1.4) .. (10.93,3.29)   ;
\draw    (390,140.04) -- (390,120.04) ;
\draw [shift={(390,120.04)}, rotate = 90] [color={rgb, 255:red, 0; green, 0; blue, 0 }  ][line width=0.75]    (17.64,-3.29) .. controls (13.66,-1.4) and (10.02,-0.3) .. (6.71,0) .. controls (10.02,0.3) and (13.66,1.4) .. (17.64,3.29)(10.93,-3.29) .. controls (6.95,-1.4) and (3.31,-0.3) .. (0,0) .. controls (3.31,0.3) and (6.95,1.4) .. (10.93,3.29)   ;

\draw    (500,90) -- (518,90) ;
\draw [shift={(520,90)}, rotate = 180] [color={rgb, 255:red, 0; green, 0; blue, 0 }  ][line width=0.75]    (10.93,-3.29) .. controls (6.95,-1.4) and (3.31,-0.3) .. (0,0) .. controls (3.31,0.3) and (6.95,1.4) .. (10.93,3.29)   ;
\draw    (500,90) -- (500,70) ;
\draw [shift={(500,70)}, rotate = 90] [color={rgb, 255:red, 0; green, 0; blue, 0 }  ][line width=0.75]    (17.64,-3.29) .. controls (13.66,-1.4) and (10.02,-0.3) .. (6.71,0) .. controls (10.02,0.3) and (13.66,1.4) .. (17.64,3.29)(10.93,-3.29) .. controls (6.95,-1.4) and (3.31,-0.3) .. (0,0) .. controls (3.31,0.3) and (6.95,1.4) .. (10.93,3.29)   ;

\draw    (500,140) -- (518,140) ;
\draw [shift={(520,140)}, rotate = 180] [color={rgb, 255:red, 0; green, 0; blue, 0 }  ][line width=0.75]    (10.93,-3.29) .. controls (6.95,-1.4) and (3.31,-0.3) .. (0,0) .. controls (3.31,0.3) and (6.95,1.4) .. (10.93,3.29)   ;
\draw    (500,140) -- (500,120) ;
\draw [shift={(500,120)}, rotate = 90] [color={rgb, 255:red, 0; green, 0; blue, 0 }  ][line width=0.75]    (17.64,-3.29) .. controls (13.66,-1.4) and (10.02,-0.3) .. (6.71,0) .. controls (10.02,0.3) and (13.66,1.4) .. (17.64,3.29)(10.93,-3.29) .. controls (6.95,-1.4) and (3.31,-0.3) .. (0,0) .. controls (3.31,0.3) and (6.95,1.4) .. (10.93,3.29)   ;

\draw    (410,110.04) .. controls (363.87,88.3) and (409.91,71.13) .. (450,70) .. controls (490.09,68.87) and (521.2,91.64) .. (480,110.04) ;
\draw    (410,120.04) .. controls (365.87,139.64) and (404.85,161.13) .. (444.94,160) .. controls (485.03,158.87) and (525.2,139.64) .. (480,120.04) ;
\draw    (444.94,70) -- (462.94,70) ;
\draw [shift={(464.94,70)}, rotate = 180] [color={rgb, 255:red, 0; green, 0; blue, 0 }  ][line width=0.75]    (10.93,-3.29) .. controls (6.95,-1.4) and (3.31,-0.3) .. (0,0) .. controls (3.31,0.3) and (6.95,1.4) .. (10.93,3.29)   ;
\draw    (444.94,70) -- (444.94,50) ;
\draw [shift={(444.94,50)}, rotate = 90] [color={rgb, 255:red, 0; green, 0; blue, 0 }  ][line width=0.75]    (17.64,-3.29) .. controls (13.66,-1.4) and (10.02,-0.3) .. (6.71,0) .. controls (10.02,0.3) and (13.66,1.4) .. (17.64,3.29)(10.93,-3.29) .. controls (6.95,-1.4) and (3.31,-0.3) .. (0,0) .. controls (3.31,0.3) and (6.95,1.4) .. (10.93,3.29)   ;

\draw    (444.94,160) -- (462.94,160) ;
\draw [shift={(464.94,160)}, rotate = 180] [color={rgb, 255:red, 0; green, 0; blue, 0 }  ][line width=0.75]    (10.93,-3.29) .. controls (6.95,-1.4) and (3.31,-0.3) .. (0,0) .. controls (3.31,0.3) and (6.95,1.4) .. (10.93,3.29)   ;
\draw    (444.94,160) -- (444.94,140) ;
\draw [shift={(444.94,140)}, rotate = 90] [color={rgb, 255:red, 0; green, 0; blue, 0 }  ][line width=0.75]    (17.64,-3.29) .. controls (13.66,-1.4) and (10.02,-0.3) .. (6.71,0) .. controls (10.02,0.3) and (13.66,1.4) .. (17.64,3.29)(10.93,-3.29) .. controls (6.95,-1.4) and (3.31,-0.3) .. (0,0) .. controls (3.31,0.3) and (6.95,1.4) .. (10.93,3.29)   ;

\draw   (410,100) -- (480,100) -- (480,130) -- (410,130) -- cycle ;

\draw (72,162.4) node [anchor=north west][inner sep=0.75pt]    {$\tau _{0}$};
\draw (252,152.44) node [anchor=north west][inner sep=0.75pt]    {$\tau _{1}$};
\draw (267,103.4) node [anchor=north west][inner sep=0.75pt]    {$=$};
\draw (371,103.44) node [anchor=north west][inner sep=0.75pt]    {$=$};
\draw (421,103.4) node [anchor=north west][inner sep=0.75pt]    {$\mathrm{swap}$};

\end{tikzpicture}
\end{adjustbox}

\end{center}

Direct-sum junctions, product collars, and framed pairs of pants give
\begin{equation}\label{eq:ex-circle-relations}
\begin{aligned}
 L_{r+s}(g\oplus h)&=L_r(g)+L_s(h),&
 L_{r+1}(g\oplus1)&=L_r(g),\\
 L_r(gh)&=L_r(g)+L_r(h),&
 L_r(kgk^{-1})&=L_r(g).
\end{aligned}
\end{equation}
For the multiplication relation, the three boundary holonomies of a pair
of pants are $g,h,(gh)^{-1}$, with their boundary orientations. Choose the
stable framing inducing the bounding framing on these boundary circles.
The block-sum relation uses a direct-sum junction transported around a
circle. Thus stabilization defines a homomorphism
\[
 L=L_\infty:\GL(F)\longrightarrow\Xi_1^{\Loc_F}.
\]

\begin{center}

\tikzset{every picture/.style={line width=0.75pt}} 

\begin{adjustbox}{max totalsize={\linewidth}{0.86\textheight}}
\begin{tikzpicture}[x=0.75pt,y=0.75pt,yscale=-1,xscale=1]

\draw  [draw opacity=0][dash pattern={on 4.5pt off 4.5pt}] (184.19,96.87) .. controls (184.02,96.9) and (183.84,96.91) .. (183.67,96.91) .. controls (178.56,96.91) and (174.42,83.48) .. (174.42,66.91) .. controls (174.42,50.34) and (178.56,36.91) .. (183.67,36.91) .. controls (183.93,36.91) and (184.19,36.95) .. (184.45,37.02) -- (183.67,66.91) -- cycle ; \draw  [dash pattern={on 4.5pt off 4.5pt}] (184.19,96.87) .. controls (184.02,96.9) and (183.84,96.91) .. (183.67,96.91) .. controls (178.56,96.91) and (174.42,83.48) .. (174.42,66.91) .. controls (174.42,50.34) and (178.56,36.91) .. (183.67,36.91) .. controls (183.93,36.91) and (184.19,36.95) .. (184.45,37.02) ;  
\draw   (22.67,66.67) .. controls (22.67,52.86) and (33.86,41.67) .. (47.67,41.67) .. controls (61.47,41.67) and (72.67,52.86) .. (72.67,66.67) .. controls (72.67,80.47) and (61.47,91.67) .. (47.67,91.67) .. controls (33.86,91.67) and (22.67,80.47) .. (22.67,66.67) -- cycle ;
\draw  [fill={rgb, 255:red, 0; green, 0; blue, 0 }  ,fill opacity=1 ] (32.55,44.87) .. controls (32.55,43.36) and (33.76,42.15) .. (35.27,42.15) .. controls (36.77,42.15) and (37.99,43.36) .. (37.99,44.87) .. controls (37.99,46.37) and (36.77,47.59) .. (35.27,47.59) .. controls (33.76,47.59) and (32.55,46.37) .. (32.55,44.87) -- cycle ;
\draw   (132.19,36.87) -- (184.19,36.87) .. controls (189.16,36.87) and (193.19,50.3) .. (193.19,66.87) .. controls (193.19,83.44) and (189.16,96.87) .. (184.19,96.87) -- (132.19,96.87) .. controls (127.22,96.87) and (123.19,83.44) .. (123.19,66.87) .. controls (123.19,50.3) and (127.22,36.87) .. (132.19,36.87) .. controls (137.16,36.87) and (141.19,50.3) .. (141.19,66.87) .. controls (141.19,83.44) and (137.16,96.87) .. (132.19,96.87) ;
\draw  [fill={rgb, 255:red, 0; green, 0; blue, 0 }  ,fill opacity=1 ] (44.95,41.67) .. controls (44.95,40.16) and (46.16,38.95) .. (47.67,38.95) .. controls (49.17,38.95) and (50.39,40.16) .. (50.39,41.67) .. controls (50.39,43.17) and (49.17,44.39) .. (47.67,44.39) .. controls (46.16,44.39) and (44.95,43.17) .. (44.95,41.67) -- cycle ;
\draw    (139.23,50.75) .. controls (150.27,47.83) and (166.03,53.55) .. (170.43,63.55) ;
\draw    (141.63,75.55) .. controls (151.63,76.11) and (167.23,71.31) .. (170.43,63.55) ;
\draw    (170.43,63.55) -- (193.63,63.31) ;
\draw  [fill={rgb, 255:red, 0; green, 0; blue, 0 }  ,fill opacity=1 ] (58.15,45.27) .. controls (58.15,43.76) and (59.36,42.55) .. (60.87,42.55) .. controls (62.37,42.55) and (63.59,43.76) .. (63.59,45.27) .. controls (63.59,46.77) and (62.37,47.99) .. (60.87,47.99) .. controls (59.36,47.99) and (58.15,46.77) .. (58.15,45.27) -- cycle ;
\draw   (238.4,40.55) -- (277.95,40.55) .. controls (283.14,40.55) and (287.35,53.98) .. (287.35,70.55) .. controls (287.35,87.12) and (283.14,100.55) .. (277.95,100.55) -- (238.4,100.55) .. controls (233.21,100.55) and (229,87.12) .. (229,70.55) .. controls (229,53.98) and (233.21,40.55) .. (238.4,40.55) .. controls (243.59,40.55) and (247.8,53.98) .. (247.8,70.55) .. controls (247.8,87.12) and (243.59,100.55) .. (238.4,100.55) ;
\draw    (247.8,65.2) -- (288.08,65.22) ;
\draw    (279.24,40.55) -- (347.91,31.85) ;
\draw    (279.24,100.55) -- (345.08,112.13) ;
\draw  [dash pattern={on 2.25pt off 1.5pt}]  (279.24,40.55) -- (310.53,55.1) ;
\draw  [dash pattern={on 2.25pt off 1.5pt}]  (279.24,100.55) -- (309.2,88.1) ;
\draw  [draw opacity=0][dash pattern={on 4.5pt off 4.5pt}] (310.53,55.1) .. controls (312.72,56.31) and (314.4,63.23) .. (314.4,71.58) .. controls (314.4,80.78) and (312.36,88.25) .. (309.85,88.25) .. controls (307.33,88.25) and (305.29,80.78) .. (305.29,71.58) .. controls (305.29,62.37) and (307.33,54.91) .. (309.85,54.91) .. controls (309.93,54.91) and (310,54.92) .. (310.08,54.93) -- (309.85,71.58) -- cycle ; \draw  [dash pattern={on 4.5pt off 4.5pt}] (310.53,55.1) .. controls (312.72,56.31) and (314.4,63.23) .. (314.4,71.58) .. controls (314.4,80.78) and (312.36,88.25) .. (309.85,88.25) .. controls (307.33,88.25) and (305.29,80.78) .. (305.29,71.58) .. controls (305.29,62.37) and (307.33,54.91) .. (309.85,54.91) .. controls (309.93,54.91) and (310,54.92) .. (310.08,54.93) ;  
\draw  [draw opacity=0][dash pattern={on 4.5pt off 4.5pt}] (310.08,54.93) .. controls (318.98,55.28) and (326.08,62.38) .. (326.08,71.09) .. controls (326.08,80.02) and (318.62,87.26) .. (309.41,87.26) .. controls (309.23,87.26) and (309.06,87.26) .. (308.88,87.25) -- (309.41,71.09) -- cycle ; \draw  [dash pattern={on 4.5pt off 4.5pt}] (310.08,54.93) .. controls (318.98,55.28) and (326.08,62.38) .. (326.08,71.09) .. controls (326.08,80.02) and (318.62,87.26) .. (309.41,87.26) .. controls (309.23,87.26) and (309.06,87.26) .. (308.88,87.25) ;  
\draw  [draw opacity=0] (347.91,31.85) .. controls (355.51,33.18) and (361.53,50.7) .. (361.53,72.12) .. controls (361.53,94.4) and (355.01,112.47) .. (346.97,112.47) .. controls (346.33,112.47) and (345.7,112.35) .. (345.08,112.13) -- (346.97,72.12) -- cycle ; \draw   (347.91,31.85) .. controls (355.51,33.18) and (361.53,50.7) .. (361.53,72.12) .. controls (361.53,94.4) and (355.01,112.47) .. (346.97,112.47) .. controls (346.33,112.47) and (345.7,112.35) .. (345.08,112.13) ;  
\draw    (288.08,65.22) -- (361.4,58.4) ;
\draw  [draw opacity=0][dash pattern={on 4.5pt off 4.5pt}] (345.08,112.13) .. controls (338,109.39) and (332.55,92.54) .. (332.55,72.17) .. controls (332.55,49.88) and (339.07,31.82) .. (347.11,31.82) .. controls (347.54,31.82) and (347.96,31.87) .. (348.38,31.97) -- (347.11,72.17) -- cycle ; \draw  [dash pattern={on 4.5pt off 4.5pt}] (345.08,112.13) .. controls (338,109.39) and (332.55,92.54) .. (332.55,72.17) .. controls (332.55,49.88) and (339.07,31.82) .. (347.11,31.82) .. controls (347.54,31.82) and (347.96,31.87) .. (348.38,31.97) ;  
\draw  [draw opacity=0][dash pattern={on 4.5pt off 4.5pt}] (279.24,100.55) .. controls (279.07,100.58) and (278.9,100.6) .. (278.72,100.6) .. controls (273.62,100.6) and (269.47,87.16) .. (269.47,70.6) .. controls (269.47,54.03) and (273.62,40.6) .. (278.72,40.6) .. controls (278.99,40.6) and (279.25,40.63) .. (279.5,40.7) -- (278.72,70.6) -- cycle ; \draw  [dash pattern={on 4.5pt off 4.5pt}] (279.24,100.55) .. controls (279.07,100.58) and (278.9,100.6) .. (278.72,100.6) .. controls (273.62,100.6) and (269.47,87.16) .. (269.47,70.6) .. controls (269.47,54.03) and (273.62,40.6) .. (278.72,40.6) .. controls (278.99,40.6) and (279.25,40.63) .. (279.5,40.7) ;  

\draw (39.52,92) node [anchor=north west][inner sep=0.75pt]    {$V$};
\draw (31.92,19.4) node [anchor=north west][inner sep=0.75pt]    {$f$};
\draw (47.92,15.8) node [anchor=north west][inner sep=0.75pt]    {$g$};
\draw (126.29,43.12) node [anchor=north west][inner sep=0.75pt]    {$f$};
\draw (58.72,31.4) node [anchor=north west][inner sep=0.75pt]    {$\cdots $};
\draw (127.79,64.07) node [anchor=north west][inner sep=0.75pt]    {$g$};
\draw (193.79,54.07) node [anchor=north west][inner sep=0.75pt]    {$fg$};
\draw (230.47,48.8) node [anchor=north west][inner sep=0.75pt]  [font=\scriptsize]  {$f\oplus \mathrm{id}$};
\draw (246.87,83.13) node [anchor=north west][inner sep=0.75pt]  [font=\scriptsize]  {$V\oplus W$};
\draw (291.47,66.47) node [anchor=north west][inner sep=0.75pt]  [font=\scriptsize]  {$W$};
\draw (322.13,97.47) node [anchor=north west][inner sep=0.75pt]  [font=\scriptsize]  {$V$};
\draw (370.47,47.8) node [anchor=north west][inner sep=0.75pt]  [font=\scriptsize]  {$f$};
\draw (15.2,110.8) node [anchor=north west][inner sep=0.75pt]  [font=\footnotesize]  {$L_{V}( f,g,\dotsc )$};
\draw (104.4,108.8) node [anchor=north west][inner sep=0.75pt]  [font=\footnotesize]  {$L_{V}( f,g) \sim L_{V}( fg)$};
\draw (239.2,122) node [anchor=north west][inner sep=0.75pt]  [font=\footnotesize]  {$L_{V\oplus W}( f\oplus \mathrm{id}) \sim L_{V}( f)$};

\end{tikzpicture}
\end{adjustbox}

\end{center}

The punctured-torus relation is a geometric expression of the fact that
$L([g,h])=0$. Equivalently, a representation of a once-punctured torus with
generator holonomies $g,h$ has boundary holonomy $[g,h]$. Its compatible
framed filling gives the same relation. Hence $L$ factors through
$\GL(F)^{\mathrm{ab}}$.

\begin{center}

\tikzset{every picture/.style={line width=0.75pt}} 

\begin{adjustbox}{max totalsize={\linewidth}{0.86\textheight}}
\begin{tikzpicture}[x=0.75pt,y=0.75pt,yscale=-1,xscale=1]

\draw [color={rgb, 255:red, 208; green, 2; blue, 27 }  ,draw opacity=1 ] [dash pattern={on 4.5pt off 4.5pt}]  (131.1,152.57) .. controls (141.83,153.46) and (159.73,123.81) .. (159.38,113.54) ;
\draw [color={rgb, 255:red, 208; green, 2; blue, 27 }  ,draw opacity=1 ]   (117.97,94.1) .. controls (136.03,92.77) and (156.35,98.53) .. (159.38,113.54) ;
\draw  [draw opacity=0] (101.26,73.56) .. controls (117.45,68.03) and (137.51,64.76) .. (159.22,64.76) .. controls (213,64.76) and (256.6,84.82) .. (256.6,109.57) .. controls (256.6,134.32) and (213,154.38) .. (159.22,154.38) .. controls (138.42,154.38) and (119.15,151.38) .. (103.33,146.27) -- (159.22,109.57) -- cycle ; \draw   (101.26,73.56) .. controls (117.45,68.03) and (137.51,64.76) .. (159.22,64.76) .. controls (213,64.76) and (256.6,84.82) .. (256.6,109.57) .. controls (256.6,134.32) and (213,154.38) .. (159.22,154.38) .. controls (138.42,154.38) and (119.15,151.38) .. (103.33,146.27) ;  
\draw  [draw opacity=0] (208.69,107.97) .. controls (201.16,114.54) and (190.98,118.57) .. (179.78,118.57) .. controls (169.05,118.57) and (159.26,114.87) .. (151.84,108.79) -- (179.78,79.99) -- cycle ; \draw   (208.69,107.97) .. controls (201.16,114.54) and (190.98,118.57) .. (179.78,118.57) .. controls (169.05,118.57) and (159.26,114.87) .. (151.84,108.79) ;  
\draw  [draw opacity=0] (162.61,111.95) .. controls (167.6,109.78) and (173.29,108.55) .. (179.33,108.55) .. controls (185.92,108.55) and (192.1,110.01) .. (197.39,112.57) -- (179.33,137.55) -- cycle ; \draw   (162.61,111.95) .. controls (167.6,109.78) and (173.29,108.55) .. (179.33,108.55) .. controls (185.92,108.55) and (192.1,110.01) .. (197.39,112.57) ;  
\draw [color={rgb, 255:red, 208; green, 2; blue, 27 }  ,draw opacity=1 ]   (111.92,139.85) .. controls (116.43,146.36) and (122.64,152.13) .. (131.1,152.57) ;
\draw  [draw opacity=0] (110.7,77.02) .. controls (123.53,74.09) and (139.12,72.37) .. (155.93,72.37) .. controls (199.8,72.37) and (235.35,84.08) .. (235.35,98.53) .. controls (235.35,112.99) and (199.8,124.7) .. (155.93,124.7) .. controls (142.52,124.7) and (129.89,123.61) .. (118.81,121.67) -- (155.93,98.53) -- cycle ; \draw  [color={rgb, 255:red, 126; green, 211; blue, 33 }  ,draw opacity=1 ] (110.7,77.02) .. controls (123.53,74.09) and (139.12,72.37) .. (155.93,72.37) .. controls (199.8,72.37) and (235.35,84.08) .. (235.35,98.53) .. controls (235.35,112.99) and (199.8,124.7) .. (155.93,124.7) .. controls (142.52,124.7) and (129.89,123.61) .. (118.81,121.67) ;  
\draw   (83.55,109.86) .. controls (83.55,90.03) and (91.73,73.95) .. (101.81,73.95) .. controls (111.9,73.95) and (120.07,90.03) .. (120.07,109.86) .. controls (120.07,129.69) and (111.9,145.77) .. (101.81,145.77) .. controls (91.73,145.77) and (83.55,129.69) .. (83.55,109.86) -- cycle ;

\draw (96.41,77.3) node [anchor=north west][inner sep=0.75pt]  [font=\footnotesize]  {$f$};
\draw (104.88,89.72) node [anchor=north west][inner sep=0.75pt]  [font=\footnotesize]  {$g$};
\draw (97.98,110.3) node [anchor=north west][inner sep=0.75pt]  [font=\footnotesize]  {$f^{-1}$};
\draw (95.41,127.58) node [anchor=north west][inner sep=0.75pt]  [font=\footnotesize]  {$g^{-1}$};
\draw (127,168.8) node [anchor=north west][inner sep=0.75pt]    {$L([ f,g]) \sim 0$};

\end{tikzpicture}
\end{adjustbox}

\end{center}

\begin{proposition}\label{prop:ex-K1}
With the preceding normalization,
\[
 L:\GL(F)^{\mathrm{ab}}\xrightarrow{\cong}\Xi_1^{\Loc_F},
 \qquad
 \Xi_1^{\Loc_F}\xrightarrow{\cong}F^\times,
 \quad L(g)\longmapsto\det(g).
\]
\end{proposition}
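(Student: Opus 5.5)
The plan is to compare $L$ with the standard computation of $K_1(F)$ through the Pontryagin--Thom comparison of Theorem~\ref{thm:gPT}. The comparison $\mathrm{PT}_1:\Xi_1^{\Loc_F}\cong\Xi_1^{\Vect_F,\oplus}\to K_1(F)$ is already available, using Proposition~\ref{prop:ex-local-systems} and the representation hypothesis in degree $d=1<n$. It therefore suffices to prove that the composite $\mathrm{PT}_1\circ L:\GL(F)^{\mathrm{ab}}\to K_1(F)$ is the classical isomorphism $\GL(F)^{\mathrm{ab}}\cong K_1(F)$, and then to compose with $\det:K_1(F)\cong F^\times$, which holds for a field because $SK_1(F)=0$ \cite[Chapter III]{WeibelKbook}.

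The first step is to identify $\mathrm{PT}_1(L_r(g))$. A circle with bounding framing $\tau_0$ and monodromy $g$ is the trace, or categorical loop, of the automorphism $g$ of $F^r$ in the groupoid $\Vect_F^{\cong}$. Under the universal property used in the proof of Theorem~\ref{thm:gPT}, the class of such a loop in $\pi_1|\Lasa_n^{\Vect_F^{\cong},\oplus}|$ is the image of the loop $g\in\pi_1(\Vect_F^{\cong},F^r)$ under the group completion map $\Vect_F^{\cong}\to\Omega^\infty K(F)$. By Quillen's plus construction, this image is exactly the class $[g]\in\GL(F)^{\mathrm{ab}}=K_1(F)$. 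The choice of $\tau_0$ matters here. Lemma~\ref{lem:ex-framing} shows that the other framing shifts the class by $\eta[F^r]=[-I_r]$, so the bounding normalization is the one that gives the identity-matching convention. The relations~\eqref{eq:ex-circle-relations} and the punctured-torus relation then show that $L$ is a well-defined homomorphism from $\GL(F)^{\mathrm{ab}}$ compatible with this identification.

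The second step is to conclude that $L$ is bijective. Injectivity follows because $\mathrm{PT}_1\circ L$ is the classical isomorphism. For surjectivity, one could use the classical isomorphism together with the fact that $\mathrm{PT}_1$ is bijective. It is also worth giving a direct geometric reduction: a closed $1$-branefold is a graph, and splitting at junctions with direct-sum coupons, then closing with cups and caps, makes it bordant to a disjoint union of circles. These circles can be merged along pairs of pants into a single circle $L_r(g)$, with $\tau_1$-framed components converted using Lemma~\ref{lem:ex-framing}. Finally, $\det(\mathrm{PT}_1 L(g))=\det g$ gives the second displayed isomorphism.

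The main obstacle is the first step. There one has to check that the geometric loop with framing $\tau_0$ really corresponds to the group-completion image of $g$, rather than to that image shifted by a framing-dependent element such as $\eta[F^r]$, and that the sign conventions for boundary orientation agree. I would settle this by evaluating on the test case $g=I_r$. Since $L_r(I_r)=0$ bounds a disk, this pins down the $\tau_0$ normalization, and multiplicativity then fixes everything else.
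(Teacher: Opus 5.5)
Your proposal is correct and follows the paper's own argument. Both identify $\mathrm{PT}_1\circ L$ with the standard isomorphism $\GL(F)^{\mathrm{ab}}\cong K_1(F)$ by viewing the $\tau_0$-framed circle as the group-completion loop of $g$, conclude that $L$ is bijective because $\mathrm{PT}_1$ is, and finish with $E(F)=\Ker(\det)$ for a field. Your extra remarks are not load-bearing; the $g=I_r$ test only rules out a constant framing shift, which a homomorphism excludes anyway, so the identification of $\mathrm{PT}_1(L_r(g))$ rests, as in the paper, on the assertion that the normalized circle represents the loop of $g$.
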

\begin{proof}
Under $\mathrm{PT}_1$, the normalized circle $L(g)$ represents the loop
associated with the automorphism $g$ in the group-completion model. Thus
$\mathrm{PT}_1\circ L$ is the standard isomorphism
$\GL(F)^{\mathrm{ab}}\cong K_1(F)$. For a field, elementary row operations
give
\[
 E(F)=[\GL(F),\GL(F)]=\Ker(\det),
 \qquad K_1(F)\cong F^\times
\]
\cite[III.1]{WeibelKbook}. Since $\mathrm{PT}_1$ is an isomorphism, so is
$L$. In particular, every class has a circle representative. This proof
uses the preceding Pontryagin--Thom theorem; the pictures explain the
representatives and relations rather than provide an independent proof
of that theorem.
\end{proof}

\subsection{\texorpdfstring{Lasagna, $K_2(F)$, Steinberg Relations, and Ghys Manifolds}{Lasagna, K2(F), Steinberg Relations, and Ghys Manifolds}}\label{subsec:k2}

The proposed geometric strategy is to simplify a closed $2$-lasagna by
surgeries, merge compatible layers, and obtain surface representatives.
To use this as a normal-form argument, each move must preserve the full
junction data and extend the chosen stable framing. We first record the
local moves that can be justified directly.

\begin{lemma}[Unraveling lemma]\label{lem:urvl}
Fix a rank and a boundary local system on $\partial D^2$, together with its
identifications. The space of extensions to an ordinary local system on
$D^2$ is either empty or contractible. In particular, any two such
extensions are homotopic relative to the specified boundary data.
\end{lemma}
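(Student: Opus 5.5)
The plan is to translate local systems on $D^2$ and on $\partial D^2=S^1$ into functors out of fundamental groupoids, and then use that $D^2$ is contractible. A rank-$r$ $F$-local system on a space $X$ is the same as an object of $\operatorname{Fun}(\Pi_1(X),\Vect_F^{\cong})$, as in the proof of Proposition~\ref{prop:ex-local-systems}. Since $\Vect_F^{\cong}$ is a $1$-groupoid, this functor groupoid is equivalent to $\rMap(X,B\GL_r(F))$ summed over ranks, and higher homotopy of $X$ plays no role. The space of extensions of a fixed boundary system $\phi$ is then the homotopy fiber
\[
 \operatorname{hofib}_{\phi}\bigl[\rMap(D^2,B\GL_r(F))\longrightarrow\rMap(S^1,B\GL_r(F))\bigr].
\]
The boundary data are given together with their identifications, so I fix $\phi$ as a point of the target rather than as an isomorphism class.

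The first step is to compute both mapping spaces. Because $D^2$ is contractible, restriction to a basepoint $x_0\in S^1$ gives $\rMap(D^2,B\GL_r(F))\simeq B\GL_r(F)$. The free loop space is
\[
 \rMap(S^1,B\GL_r(F))\simeq\coprod_{[g]}B\,C_{\GL_r(F)}(g),
\]
with one component for each conjugacy class $[g]$. The restriction map is constant loops, which lands in the component of $g=I_r$ and is an equivalence $B\GL_r(F)\simeq BC(I_r)$ onto that component.

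The second step reads off the homotopy fiber. If $\phi$ has nontrivial monodromy, the fiber is empty. If $\phi$ has trivial monodromy, $\phi$ lies in the component that the restriction map hits equivalently, so the fiber is contractible. Concretely, an extension is a trivialization of $\phi$ with a chosen fiber value at $x_0$. That value is forced to be the given fiber $\phi_{x_0}$ up to a unique isomorphism, and the only automorphism of $D^2$-local systems restricting to the identity on $\partial D^2$ is the identity. Both $\pi_0$ and $\pi_1$ of the fiber are therefore trivial, and the fiber has no higher homotopy because the target is a $1$-type. Any two extensions are then connected by a path that is constant on the boundary, which is the claimed homotopy relative to the boundary data.

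The step that needs the most care is keeping track of the boundary identifications, and I would handle it with an explicit check. One must not quotient $\phi$ by its automorphisms, because then $\pi_0$ of the fiber would become $C(g)$-orbits rather than a point. I would therefore work with strict functors on a fixed model of $\Pi_1$, namely a single object and no generators for $D^2$, and one generator for $S^1$. In that model the restriction is visibly the inclusion of the identity-monodromy locus, and an extension is literally a unique isomorphism class with no residual automorphisms. Rank is fixed throughout, so branch data play no role here.
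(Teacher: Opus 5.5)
Your proposal is correct and is essentially the paper's argument. Both proofs identify rank-$r$ local systems with maps to the aspherical space $B\GL_r(F)$ and compute the homotopy fiber of restriction to $\partial D^2$: the paper identifies a nonempty fiber with $\Omega^2B\GL_r(F)\simeq *$, whereas you use the decomposition $\rMap(S^1,B\GL_r(F))\simeq\coprod_{[g]}BC(g)$, or equivalently the full faithfulness of the restriction functor of groupoids. One side remark is off, though nothing in your proof depends on it. Taking $\phi$ only up to its automorphisms would not change $\pi_0$ of the fiber; in the nonempty case it would replace the contractible fiber by its homotopy quotient by $\operatorname{Aut}(\phi)=\GL_r(F)$, namely $B\GL_r(F)$, so it alters $\pi_1$ instead.
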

\begin{proof}
A rank-$r$ local system is a map to $B\GL_r(F)$, a $K(\GL_r(F),1)$.
A boundary local system extends exactly when its monodromy is trivial.
After choosing one extension, the relative extension space has the
homotopy type of $\Omega^2B\GL_r(F)$, which is contractible.
\end{proof}
The underlying disk and its framing are fixed in this lemma. It does not
assert that arbitrary branched lasagna fillings with the same boundary
are homotopic; their difference can detect $K_2(F)$.

\begin{center}

\tikzset{every picture/.style={line width=0.75pt}} 

\begin{adjustbox}{max totalsize={\linewidth}{0.86\textheight}}
\begin{tikzpicture}[x=0.75pt,y=0.75pt,yscale=-1,xscale=1]

\draw  [fill={rgb, 255:red, 248; green, 231; blue, 28 }  ,fill opacity=1 ] (54,113.85) .. controls (54,83.94) and (78.24,59.7) .. (108.15,59.7) .. controls (138.06,59.7) and (162.3,83.94) .. (162.3,113.85) .. controls (162.3,143.76) and (138.06,168) .. (108.15,168) .. controls (78.24,168) and (54,143.76) .. (54,113.85) -- cycle ;
\draw    (60.67,88) .. controls (140,60.67) and (67.33,170.67) .. (157.33,135.33) ;
\draw    (59.33,139.33) .. controls (99.33,109.33) and (114.67,115.33) .. (154.67,85.33) ;
\draw   (77.33,96.67) .. controls (77.33,82.86) and (88.53,71.67) .. (102.33,71.67) .. controls (116.14,71.67) and (127.33,82.86) .. (127.33,96.67) .. controls (127.33,110.47) and (116.14,121.67) .. (102.33,121.67) .. controls (88.53,121.67) and (77.33,110.47) .. (77.33,96.67) -- cycle ;
\draw  [fill={rgb, 255:red, 248; green, 231; blue, 28 }  ,fill opacity=1 ] (212.67,112.52) .. controls (212.67,82.61) and (236.91,58.37) .. (266.82,58.37) .. controls (296.72,58.37) and (320.97,82.61) .. (320.97,112.52) .. controls (320.97,142.42) and (296.72,166.67) .. (266.82,166.67) .. controls (236.91,166.67) and (212.67,142.42) .. (212.67,112.52) -- cycle ;
\draw    (221.33,83) -- (279.33,120) ;
\draw    (225.33,145.67) -- (279.33,120) ;
\draw    (248,121.33) -- (310,144.67) ;
\draw    (248,121.33) -- (311.33,81.33) ;
\draw    (263.33,110) -- (264,126.67) ;
\draw  [fill={rgb, 255:red, 248; green, 231; blue, 28 }  ,fill opacity=1 ] (75.77,228.27) .. controls (75.77,200.2) and (98.53,177.45) .. (126.6,177.45) .. controls (154.67,177.45) and (177.43,200.2) .. (177.43,228.27) .. controls (177.43,256.34) and (154.67,279.1) .. (126.6,279.1) .. controls (98.53,279.1) and (75.77,256.34) .. (75.77,228.27) -- cycle ;
\draw  [fill={rgb, 255:red, 248; green, 231; blue, 28 }  ,fill opacity=1 ] (208.16,226.99) .. controls (208.16,198.92) and (230.92,176.17) .. (258.99,176.17) .. controls (287.06,176.17) and (309.82,198.92) .. (309.82,226.99) .. controls (309.82,255.06) and (287.06,277.82) .. (258.99,277.82) .. controls (230.92,277.82) and (208.16,255.06) .. (208.16,226.99) -- cycle ;
\draw [color={rgb, 255:red, 74; green, 144; blue, 226 }  ,draw opacity=1 ]   (218.09,197.3) -- (300.25,197.77) ;
\draw [shift={(302.25,197.78)}, rotate = 180.32] [color={rgb, 255:red, 74; green, 144; blue, 226 }  ,draw opacity=1 ][line width=0.75]    (10.93,-3.29) .. controls (6.95,-1.4) and (3.31,-0.3) .. (0,0) .. controls (3.31,0.3) and (6.95,1.4) .. (10.93,3.29)   ;
\draw [color={rgb, 255:red, 208; green, 2; blue, 27 }  ,draw opacity=1 ]   (210.52,217.16) -- (304.98,218.55) ;
\draw [shift={(306.98,218.58)}, rotate = 180.84] [color={rgb, 255:red, 208; green, 2; blue, 27 }  ,draw opacity=1 ][line width=0.75]    (10.93,-3.29) .. controls (6.95,-1.4) and (3.31,-0.3) .. (0,0) .. controls (3.31,0.3) and (6.95,1.4) .. (10.93,3.29)   ;
\draw [color={rgb, 255:red, 74; green, 144; blue, 226 }  ,draw opacity=1 ]   (211.67,239.04) -- (307.88,239.33) ;
\draw [shift={(209.67,239.03)}, rotate = 0.17] [color={rgb, 255:red, 74; green, 144; blue, 226 }  ,draw opacity=1 ][line width=0.75]    (10.93,-3.29) .. controls (6.95,-1.4) and (3.31,-0.3) .. (0,0) .. controls (3.31,0.3) and (6.95,1.4) .. (10.93,3.29)   ;
\draw [color={rgb, 255:red, 208; green, 2; blue, 27 }  ,draw opacity=1 ]   (220.09,256.91) -- (298.47,258.3) ;
\draw [shift={(218.09,256.88)}, rotate = 1.01] [color={rgb, 255:red, 208; green, 2; blue, 27 }  ,draw opacity=1 ][line width=0.75]    (10.93,-3.29) .. controls (6.95,-1.4) and (3.31,-0.3) .. (0,0) .. controls (3.31,0.3) and (6.95,1.4) .. (10.93,3.29)   ;
\draw [color={rgb, 255:red, 74; green, 144; blue, 226 }  ,draw opacity=1 ]   (80.67,206.33) .. controls (124.89,206.37) and (123.7,231.2) .. (79.03,239.13) ;
\draw [shift={(77.67,239.37)}, rotate = 350.54] [color={rgb, 255:red, 74; green, 144; blue, 226 }  ,draw opacity=1 ][line width=0.75]    (10.93,-3.29) .. controls (6.95,-1.4) and (3.31,-0.3) .. (0,0) .. controls (3.31,0.3) and (6.95,1.4) .. (10.93,3.29)   ;
\draw [color={rgb, 255:red, 74; green, 144; blue, 226 }  ,draw opacity=1 ]   (175.67,236.37) .. controls (131.48,234.57) and (124.68,210.62) .. (169.44,204.49) ;
\draw [shift={(170.82,204.31)}, rotate = 172.83] [color={rgb, 255:red, 74; green, 144; blue, 226 }  ,draw opacity=1 ][line width=0.75]    (10.93,-3.29) .. controls (6.95,-1.4) and (3.31,-0.3) .. (0,0) .. controls (3.31,0.3) and (6.95,1.4) .. (10.93,3.29)   ;
\draw [color={rgb, 255:red, 208; green, 2; blue, 27 }  ,draw opacity=1 ]   (75.77,228.27) .. controls (119.99,228.31) and (131.44,250.58) .. (87.03,258.47) ;
\draw [shift={(85.67,258.7)}, rotate = 350.54] [color={rgb, 255:red, 208; green, 2; blue, 27 }  ,draw opacity=1 ][line width=0.75]    (10.93,-3.29) .. controls (6.95,-1.4) and (3.31,-0.3) .. (0,0) .. controls (3.31,0.3) and (6.95,1.4) .. (10.93,3.29)   ;
\draw [color={rgb, 255:red, 208; green, 2; blue, 27 }  ,draw opacity=1 ]   (170,255.7) .. controls (125.82,253.9) and (131.1,230.93) .. (176.11,224.82) ;
\draw [shift={(177.48,224.64)}, rotate = 172.83] [color={rgb, 255:red, 208; green, 2; blue, 27 }  ,draw opacity=1 ][line width=0.75]    (10.93,-3.29) .. controls (6.95,-1.4) and (3.31,-0.3) .. (0,0) .. controls (3.31,0.3) and (6.95,1.4) .. (10.93,3.29)   ;

\draw (40.67,69.07) node [anchor=north west][inner sep=0.75pt]    {$a_{1}$};
\draw (160.67,69.73) node [anchor=north west][inner sep=0.75pt]    {$a_{2}$};
\draw (163.33,134.4) node [anchor=north west][inner sep=0.75pt]    {$a_{3}$};
\draw (41.33,141.73) node [anchor=north west][inner sep=0.75pt]    {$a_{4}$};
\draw (175.33,101.07) node [anchor=north west][inner sep=0.75pt]    {$\sim $};
\draw (199.33,67.73) node [anchor=north west][inner sep=0.75pt]    {$a_{1}$};
\draw (319.33,68.4) node [anchor=north west][inner sep=0.75pt]    {$a_{2}$};
\draw (322,133.07) node [anchor=north west][inner sep=0.75pt]    {$a_{3}$};
\draw (200,140.4) node [anchor=north west][inner sep=0.75pt]    {$a_{4}$};
\draw (182.77,218.09) node [anchor=north west][inner sep=0.75pt]    {$\sim $};

\end{tikzpicture}
\end{adjustbox}

\end{center}

\begin{lemma}[Image Candy \footnote{also called Cut Rock Candy} Surgery]\label{lem:imgS}
Let $A$ be a framed decorated surface, and let $D_1,D_2$ be disjoint disk
regions on which the decoration is an ordinary local system. Choose an
identification of their disk decorations and an attaching framing that
extends over a $3$-dimensional $1$-handle. Replacing the two disks by the
annulus $D^1\times S^1$ gives a decorated surface $B$ with $A\sim B$.
In a string presentation, oppositely oriented attaching circles have
words $(a_1,\ldots,a_q)$ and $(a_q^{-1},\ldots,a_1^{-1})$.
\end{lemma}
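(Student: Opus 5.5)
The bordism $A\sim B$ will be the trace of a $3$-dimensional $1$-handle attached to $A\times[0,1]$ along $D_1\sqcup D_2\subset A\times\{1\}$. Concretely, I take
\[
 W:=A\times[0,1]\;\cup_{(D_1\sqcup D_2)\times\{1\}}\;D^1\times D^2 ,
\]
with $\{\pm1\}\times D^2$ glued to $D_1$ and $D_2$. Its upper boundary is $(A\setminus(D_1\sqcup D_2))\cup D^1\times S^1=B$, and its lower boundary is $A$. The proof splits into three steps: extend the framing, extend the decoration, and check independence of choices together with the word description.

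The framing is handled by hypothesis. The attaching framing is assumed to extend over the $1$-handle, so it glues to the product stable framing on $A\times[0,1]$. The collars near $\partial D_i$ are products, so the glued framing is smooth on the corner where the handle meets $A\times\{1\}$.

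For the decoration, I use the product decoration on $A\times[0,1]$. On the core $D^1\times D^2$ the decoration is supported away from all branch loci, because $D_1$ and $D_2$ carry ordinary local systems. I extend it as the pullback along $D^1\times D^2\to D^2$ of the chosen common disk decoration, transported through the given identification $D_1\cong D_2$. By Lemma~\ref{lem:urvl}, the extension over each $\{t\}\times D^2$ relative to its boundary data is unique up to a contractible choice. Hence the decoration on the handle is determined up to contractible choice by its restriction to the attaching circles. In particular it agrees with the given decorations on $D_1$ and $D_2$, and the resulting $3$-lasagna has no new junctions or coupons. This gives a lasagna bordism and so $A\sim B$ in $\Xi_2^{\Loc_F}$.

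For the string presentation, I read the strings crossing $\partial D_1$ as a cyclic word $(a_1,\ldots,a_q)$. The handle identifies $\partial D_1$ with $\partial D_2$ through an orientation-reversing map of the two ends of $D^1\times S^1$. So the same strings, traversed along $\partial D_2$ with its induced boundary orientation, read in reverse order with inverse labels, namely $(a_q^{-1},\ldots,a_1^{-1})$. This matches the monodromy condition that the annulus local system has boundary holonomies $w$ and $w^{-1}$.

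I expect the main obstacle to be compatibility at the corner circles $\partial D_i\times\{1\}$, where the strings $a_j$ enter the handle region. There the decoration is not an ordinary local system on a neighbourhood, so Lemma~\ref{lem:urvl} does not apply directly. I would first shrink $D_i$ slightly so that the strings cross only the collar, and then use the product collar structure to extend the strings as $a_j\times D^1$ sheets running along the annulus. After this, the only nontrivial filling occurs on genuinely ordinary local-system disks.
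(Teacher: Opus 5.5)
Your proposal is correct and follows the paper's argument: you form the trace of a $1$-handle $D^1\times D^2$ attached to $A\times I$ along $D_1\sqcup D_2$, take the framing from the hypothesis, and extend the local system over the contractible handle using the chosen identification; your derivation of the reversed, inverted word $(a_q^{-1},\ldots,a_1^{-1})$ from the orientation-reversing identification of the two ends usefully fills in a point the paper leaves implicit. Two small corrections: the handle decoration is determined up to contractible choice by the attaching disks \emph{together with} the chosen identification, not by the attaching circles, since relative to $\{\pm1\}\times D^2$ the extension space is a $\GL_r(F)$-torsor and your slice-wise appeal to Lemma~\ref{lem:urvl} does not apply (the circles $\{t\}\times S^1$ bounding the interior slices lie in the new annulus and carry no prescribed data); and the corner ``obstacle'' is vacuous, because strings labeled by isomorphisms of $\Vect_F^{\cong}$ near $\partial D_i$ still present an ordinary local system, which the hypothesis on $D_i$ already provides.
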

\begin{proof}
Attach $D^1\times D^2$ to $A\times I$ along $D_1,D_2$ in its upper
boundary. The chosen identification extends the local system over the
handle, since the attaching disks and the handle are contractible.
Lemma~\ref{lem:urvl} supplies the relative compatibility for alternative
disk presentations. The incoming and outgoing boundaries of this surgery
trace are $A$ and $B$; the chosen attaching framing makes it a framed
bordism. 
\end{proof}

\begin{center}

\tikzset{every picture/.style={line width=0.75pt}} 

\begin{adjustbox}{max totalsize={\linewidth}{0.86\textheight}}
\begin{tikzpicture}[x=0.75pt,y=0.75pt,yscale=-1,xscale=1]

\draw  [fill={rgb, 255:red, 248; green, 231; blue, 28 }  ,fill opacity=1 ] (69,120.84) .. controls (69,86.49) and (113.84,58.65) .. (169.15,58.65) .. controls (224.46,58.65) and (269.3,86.49) .. (269.3,120.84) .. controls (269.3,155.19) and (224.46,183.03) .. (169.15,183.03) .. controls (113.84,183.03) and (69,155.19) .. (69,120.84) -- cycle ;
\draw  [dash pattern={on 0.84pt off 2.51pt}] (97.8,119.65) .. controls (97.8,108.05) and (107.2,98.65) .. (118.8,98.65) .. controls (130.4,98.65) and (139.8,108.05) .. (139.8,119.65) .. controls (139.8,131.25) and (130.4,140.65) .. (118.8,140.65) .. controls (107.2,140.65) and (97.8,131.25) .. (97.8,119.65) -- cycle ;
\draw  [dash pattern={on 0.84pt off 2.51pt}] (190.3,116.65) .. controls (190.3,105.05) and (199.7,95.65) .. (211.3,95.65) .. controls (222.9,95.65) and (232.3,105.05) .. (232.3,116.65) .. controls (232.3,128.25) and (222.9,137.65) .. (211.3,137.65) .. controls (199.7,137.65) and (190.3,128.25) .. (190.3,116.65) -- cycle ;
\draw [color={rgb, 255:red, 144; green, 19; blue, 254 }  ,draw opacity=1 ]   (118.8,119.65) -- (129.73,82.57) ;
\draw [shift={(130.3,80.65)}, rotate = 106.43] [color={rgb, 255:red, 144; green, 19; blue, 254 }  ,draw opacity=1 ][line width=0.75]    (10.93,-3.29) .. controls (6.95,-1.4) and (3.31,-0.3) .. (0,0) .. controls (3.31,0.3) and (6.95,1.4) .. (10.93,3.29)   ;
\draw [color={rgb, 255:red, 208; green, 2; blue, 27 }  ,draw opacity=1 ]   (86.3,136.15) -- (117.02,120.56) ;
\draw [shift={(118.8,119.65)}, rotate = 153.08] [color={rgb, 255:red, 208; green, 2; blue, 27 }  ,draw opacity=1 ][line width=0.75]    (10.93,-3.29) .. controls (6.95,-1.4) and (3.31,-0.3) .. (0,0) .. controls (3.31,0.3) and (6.95,1.4) .. (10.93,3.29)   ;
\draw [color={rgb, 255:red, 74; green, 144; blue, 226 }  ,draw opacity=1 ]   (144.8,141.15) -- (120.34,120.92) ;
\draw [shift={(118.8,119.65)}, rotate = 39.59] [color={rgb, 255:red, 74; green, 144; blue, 226 }  ,draw opacity=1 ][line width=0.75]    (10.93,-3.29) .. controls (6.95,-1.4) and (3.31,-0.3) .. (0,0) .. controls (3.31,0.3) and (6.95,1.4) .. (10.93,3.29)   ;
\draw [color={rgb, 255:red, 144; green, 19; blue, 254 }  ,draw opacity=1 ]   (222.3,79.65) -- (211.87,114.73) ;
\draw [shift={(211.3,116.65)}, rotate = 286.56] [color={rgb, 255:red, 144; green, 19; blue, 254 }  ,draw opacity=1 ][line width=0.75]    (10.93,-3.29) .. controls (6.95,-1.4) and (3.31,-0.3) .. (0,0) .. controls (3.31,0.3) and (6.95,1.4) .. (10.93,3.29)   ;
\draw [color={rgb, 255:red, 208; green, 2; blue, 27 }  ,draw opacity=1 ]   (211.3,116.65) -- (232.89,138.24) ;
\draw [shift={(234.3,139.65)}, rotate = 225] [color={rgb, 255:red, 208; green, 2; blue, 27 }  ,draw opacity=1 ][line width=0.75]    (10.93,-3.29) .. controls (6.95,-1.4) and (3.31,-0.3) .. (0,0) .. controls (3.31,0.3) and (6.95,1.4) .. (10.93,3.29)   ;
\draw [color={rgb, 255:red, 74; green, 144; blue, 226 }  ,draw opacity=1 ]   (212.8,116.15) -- (188.36,135.89) ;
\draw [shift={(186.8,137.15)}, rotate = 321.07] [color={rgb, 255:red, 74; green, 144; blue, 226 }  ,draw opacity=1 ][line width=0.75]    (10.93,-3.29) .. controls (6.95,-1.4) and (3.31,-0.3) .. (0,0) .. controls (3.31,0.3) and (6.95,1.4) .. (10.93,3.29)   ;
\draw  [fill={rgb, 255:red, 248; green, 231; blue, 28 }  ,fill opacity=1 ] (294,116.84) .. controls (294,79.73) and (338.84,49.65) .. (394.15,49.65) .. controls (449.46,49.65) and (494.3,79.73) .. (494.3,116.84) .. controls (494.3,153.95) and (449.46,184.03) .. (394.15,184.03) .. controls (338.84,184.03) and (294,153.95) .. (294,116.84) -- cycle ;
\draw  [draw opacity=0][dash pattern={on 0.84pt off 2.51pt}] (322.83,121.75) .. controls (322.81,121.39) and (322.8,121.02) .. (322.8,120.65) .. controls (322.8,109.05) and (332.2,99.65) .. (343.8,99.65) .. controls (354.83,99.65) and (363.87,108.15) .. (364.73,118.95) -- (343.8,120.65) -- cycle ; \draw  [dash pattern={on 0.84pt off 2.51pt}] (322.83,121.75) .. controls (322.81,121.39) and (322.8,121.02) .. (322.8,120.65) .. controls (322.8,109.05) and (332.2,99.65) .. (343.8,99.65) .. controls (354.83,99.65) and (363.87,108.15) .. (364.73,118.95) ;  
\draw  [draw opacity=0][dash pattern={on 0.84pt off 2.51pt}] (415.31,117.17) .. controls (415.56,105.8) and (424.86,96.65) .. (436.3,96.65) .. controls (447.9,96.65) and (457.3,106.05) .. (457.3,117.65) .. controls (457.3,118.6) and (457.24,119.54) .. (457.11,120.46) -- (436.3,117.65) -- cycle ; \draw  [dash pattern={on 0.84pt off 2.51pt}] (415.31,117.17) .. controls (415.56,105.8) and (424.86,96.65) .. (436.3,96.65) .. controls (447.9,96.65) and (457.3,106.05) .. (457.3,117.65) .. controls (457.3,118.6) and (457.24,119.54) .. (457.11,120.46) ;  
\draw [color={rgb, 255:red, 144; green, 19; blue, 254 }  ,draw opacity=1 ] [dash pattern={on 0.84pt off 2.51pt}]  (345.8,98.15) -- (353.1,78.52) ;
\draw [shift={(353.8,76.65)}, rotate = 110.41] [color={rgb, 255:red, 144; green, 19; blue, 254 }  ,draw opacity=1 ][line width=0.75]    (10.93,-3.29) .. controls (6.95,-1.4) and (3.31,-0.3) .. (0,0) .. controls (3.31,0.3) and (6.95,1.4) .. (10.93,3.29)   ;
\draw [color={rgb, 255:red, 208; green, 2; blue, 27 }  ,draw opacity=1 ]   (307.8,142.65) -- (324.11,132.23) ;
\draw [shift={(325.8,131.15)}, rotate = 147.43] [color={rgb, 255:red, 208; green, 2; blue, 27 }  ,draw opacity=1 ][line width=0.75]    (10.93,-3.29) .. controls (6.95,-1.4) and (3.31,-0.3) .. (0,0) .. controls (3.31,0.3) and (6.95,1.4) .. (10.93,3.29)   ;
\draw [color={rgb, 255:red, 74; green, 144; blue, 226 }  ,draw opacity=1 ]   (379.8,147.15) -- (362.81,132.46) ;
\draw [shift={(361.3,131.15)}, rotate = 40.86] [color={rgb, 255:red, 74; green, 144; blue, 226 }  ,draw opacity=1 ][line width=0.75]    (10.93,-3.29) .. controls (6.95,-1.4) and (3.31,-0.3) .. (0,0) .. controls (3.31,0.3) and (6.95,1.4) .. (10.93,3.29)   ;
\draw [color={rgb, 255:red, 144; green, 19; blue, 254 }  ,draw opacity=1 ] [dash pattern={on 0.84pt off 2.51pt}]  (434.8,81.15) -- (432.62,94.68) ;
\draw [shift={(432.3,96.65)}, rotate = 279.16] [color={rgb, 255:red, 144; green, 19; blue, 254 }  ,draw opacity=1 ][line width=0.75]    (10.93,-3.29) .. controls (6.95,-1.4) and (3.31,-0.3) .. (0,0) .. controls (3.31,0.3) and (6.95,1.4) .. (10.93,3.29)   ;
\draw [color={rgb, 255:red, 208; green, 2; blue, 27 }  ,draw opacity=1 ]   (455.8,127.65) -- (475.05,138.19) ;
\draw [shift={(476.8,139.15)}, rotate = 208.71] [color={rgb, 255:red, 208; green, 2; blue, 27 }  ,draw opacity=1 ][line width=0.75]    (10.93,-3.29) .. controls (6.95,-1.4) and (3.31,-0.3) .. (0,0) .. controls (3.31,0.3) and (6.95,1.4) .. (10.93,3.29)   ;
\draw [color={rgb, 255:red, 74; green, 144; blue, 226 }  ,draw opacity=1 ]   (422.3,134.15) -- (408.76,146.79) ;
\draw [shift={(407.3,148.15)}, rotate = 316.97] [color={rgb, 255:red, 74; green, 144; blue, 226 }  ,draw opacity=1 ][line width=0.75]    (10.93,-3.29) .. controls (6.95,-1.4) and (3.31,-0.3) .. (0,0) .. controls (3.31,0.3) and (6.95,1.4) .. (10.93,3.29)   ;
\draw  [draw opacity=0] (322.35,121.54) .. controls (321.01,118.3) and (320.3,114.91) .. (320.3,111.41) .. controls (320.3,88.35) and (351.3,69.65) .. (389.55,69.65) .. controls (427.8,69.65) and (458.8,88.35) .. (458.8,111.41) .. controls (458.8,114.05) and (458.39,116.63) .. (457.62,119.13) -- (389.55,111.41) -- cycle ; \draw   (322.35,121.54) .. controls (321.01,118.3) and (320.3,114.91) .. (320.3,111.41) .. controls (320.3,88.35) and (351.3,69.65) .. (389.55,69.65) .. controls (427.8,69.65) and (458.8,88.35) .. (458.8,111.41) .. controls (458.8,114.05) and (458.39,116.63) .. (457.62,119.13) ;  
\draw  [draw opacity=0] (364.32,118.71) .. controls (364.33,119.04) and (364.34,119.37) .. (364.34,119.71) .. controls (364.34,131.3) and (354.94,140.71) .. (343.34,140.71) .. controls (332.08,140.71) and (322.9,131.85) .. (322.36,120.73) -- (343.34,119.71) -- cycle ; \draw   (364.32,118.71) .. controls (364.33,119.04) and (364.34,119.37) .. (364.34,119.71) .. controls (364.34,131.3) and (354.94,140.71) .. (343.34,140.71) .. controls (332.08,140.71) and (322.9,131.85) .. (322.36,120.73) ;  
\draw  [draw opacity=0] (457.28,116.65) .. controls (457.29,116.98) and (457.3,117.31) .. (457.3,117.65) .. controls (457.3,129.25) and (447.9,138.65) .. (436.3,138.65) .. controls (425.04,138.65) and (415.86,129.8) .. (415.32,118.67) -- (436.3,117.65) -- cycle ; \draw   (457.28,116.65) .. controls (457.29,116.98) and (457.3,117.31) .. (457.3,117.65) .. controls (457.3,129.25) and (447.9,138.65) .. (436.3,138.65) .. controls (425.04,138.65) and (415.86,129.8) .. (415.32,118.67) ;  
\draw  [draw opacity=0] (364.66,120.52) .. controls (366.51,110.72) and (377.25,103.19) .. (390.22,103.19) .. controls (402.36,103.19) and (412.55,109.79) .. (415.32,118.67) -- (390.22,123.55) -- cycle ; \draw   (364.66,120.52) .. controls (366.51,110.72) and (377.25,103.19) .. (390.22,103.19) .. controls (402.36,103.19) and (412.55,109.79) .. (415.32,118.67) ;  
\draw [color={rgb, 255:red, 208; green, 2; blue, 27 }  ,draw opacity=1 ]   (325.8,131.15) .. controls (344.3,117.28) and (359.9,94.83) .. (379.3,88.15) .. controls (398.41,81.57) and (433.72,89.89) .. (454.84,125.98) ;
\draw [shift={(455.8,127.65)}, rotate = 240.75] [color={rgb, 255:red, 208; green, 2; blue, 27 }  ,draw opacity=1 ][line width=0.75]    (10.93,-3.29) .. controls (6.95,-1.4) and (3.31,-0.3) .. (0,0) .. controls (3.31,0.3) and (6.95,1.4) .. (10.93,3.29)   ;
\draw [color={rgb, 255:red, 74; green, 144; blue, 226 }  ,draw opacity=1 ]   (361.3,131.15) .. controls (354.33,117.21) and (369.8,93.65) .. (390.8,94.65) .. controls (411.27,95.62) and (425.1,108.01) .. (422.53,132.26) ;
\draw [shift={(422.3,134.15)}, rotate = 277.82] [color={rgb, 255:red, 74; green, 144; blue, 226 }  ,draw opacity=1 ][line width=0.75]    (10.93,-3.29) .. controls (6.95,-1.4) and (3.31,-0.3) .. (0,0) .. controls (3.31,0.3) and (6.95,1.4) .. (10.93,3.29)   ;
\draw [color={rgb, 255:red, 144; green, 19; blue, 254 }  ,draw opacity=1 ] [dash pattern={on 0.84pt off 2.51pt}]  (432.3,96.65) .. controls (420.91,79.82) and (379.64,67.89) .. (346.79,97.25) ;
\draw [shift={(345.8,98.15)}, rotate = 317.25] [color={rgb, 255:red, 144; green, 19; blue, 254 }  ,draw opacity=1 ][line width=0.75]    (10.93,-3.29) .. controls (6.95,-1.4) and (3.31,-0.3) .. (0,0) .. controls (3.31,0.3) and (6.95,1.4) .. (10.93,3.29)   ;

\draw (108.67,81.07) node [anchor=north west][inner sep=0.75pt]    {$a_{1}$};
\draw (139.17,138.73) node [anchor=north west][inner sep=0.75pt]    {$a_{2}$};
\draw (90.83,136.4) node [anchor=north west][inner sep=0.75pt]    {$a_{3}$};
\draw (196.17,74.07) node [anchor=north west][inner sep=0.75pt]    {$a_{1}^{-1}$};
\draw (184.67,137.23) node [anchor=north west][inner sep=0.75pt]    {$a_{2}^{-1}$};
\draw (228.83,138.4) node [anchor=north west][inner sep=0.75pt]    {$a_{3}^{-1}$};
\draw (342.17,59.07) node [anchor=north west][inner sep=0.75pt]    {$a_{1}$};
\draw (364.17,139.73) node [anchor=north west][inner sep=0.75pt]    {$a_{2}$};
\draw (315.83,137.4) node [anchor=north west][inner sep=0.75pt]    {$a_{3}$};
\draw (426.67,59.07) node [anchor=north west][inner sep=0.75pt]    {$a_{1}^{-1}$};
\draw (410.67,146.23) node [anchor=north west][inner sep=0.75pt]    {$a_{2}^{-1}$};
\draw (453.83,139.4) node [anchor=north west][inner sep=0.75pt]    {$a_{3}^{-1}$};
\draw (275.3,111.9) node [anchor=north west][inner sep=0.75pt]    {$\sim $};

\end{tikzpicture}
\end{adjustbox}

\end{center}

\paragraph{Scalar local systems and determinant normalization.}
An $F^\times$-local system on a closed oriented surface $S$ is a class in
$H^1(S;F^\times)$.

\begin{proposition}
Every rank-one local system on $S$ can be represented by finitely many
oriented curves $C_i$, labeled by $u_i\in F^\times$, such that
\[
 \rho(\gamma)=\prod_i u_i^{\langle\gamma,C_i\rangle}.
\]
The curves can be chosen in general position, but need not be pairwise
disjoint.

Furthermore, each such local system is bordant to an $E(F)$-local system.
\end{proposition}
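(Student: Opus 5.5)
The argument has two parts. For the curve representation, I would use Poincaré duality on the closed oriented surface $S$: $H^1(S;F^\times)\cong\operatorname{Hom}(H_1(S;\bZ),F^\times)$, since $H_1(S;\bZ)$ is free and the Ext term vanishes. Choose a symplectic basis $\alpha_1,\beta_1,\ldots,\alpha_g,\beta_g$ of $H_1(S;\bZ)$, represented by simple closed curves. Put $u_i=\rho(\beta_i)$ on a curve $C$ parallel to $\alpha_i$, and $\rho(\alpha_i)^{\pm1}$ on a curve parallel to $\beta_i$, with signs fixed by the intersection form. Then $\gamma\mapsto\prod_i u_i^{\langle\gamma,C_i\rangle}$ is a homomorphism $H_1(S)\to F^\times$ that agrees with $\rho$ on the basis, and hence everywhere. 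Rank-one local systems are determined up to isomorphism by their monodromy, so this gives the representation. The $\alpha$- and $\beta$-curves meet transversally in single points, so the curves can be put in general position but are not disjoint. Geometrically, this is the standard Pontryagin--Thom dual description of a map $S\to K(F^\times,1)$ as a transverse preimage of a codimension-one wall, with the wall labeled by the gluing automorphism. I would note this to connect with the string-diagram picture of coupons.

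For the bordism claim, I would first stabilize. Use the junction $F\oplus F\to F^2$ and the vacuum relation to regard the rank-one system $L$, as in the relations~\eqref{eq:ex-circle-relations}, as $L\oplus\underline{F}$, or as a rank-$2$ system with monodromy $\rho(\gamma)\oplus 1$. This does not change its class in $\Xi_2^{\Loc_F}$. On each curve $C_i$, write $\operatorname{diag}(u_i,1)=\operatorname{diag}(u_i,u_i^{-1})\cdot\operatorname{diag}(1,u_i)$. The first factor is the Whitehead element $w(u_i)w(-1)$, which is a product of elementary matrices. The second factor contributes a rank-one system $L'$ with labels $u_i$ on the second summand.

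The main obstacle is removing the leftover $\det$-part. My plan is to double the system: realize $L\oplus L^{-1}$ inside $L\oplus L^{-1}\oplus L'\ldots$. More simply, I would use the Image Candy surgery (Lemma~\ref{lem:imgS}) and cobordisms of the underlying surface. Let $(S,L)$ be represented by the curves $C_i$. Take a $3$-manifold bounding $S$, for instance a handlebody $H$ with framing extended. A rank-one system extends over $H$ exactly when $\rho$ kills the meridians. Choose $H$ so that its meridians are the $\beta_i$, and apply the decomposition above: replace $\rho$ by $\rho'=\rho\cdot\chi$, where $\chi$ takes the value $\rho(\beta_i)^{-1}$ on $\beta_i$ and $1$ on $\alpha_i$. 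The correction $\chi$ is realized in the second summand. The rank-two system $\operatorname{diag}(\rho',\chi^{-1})$ can then be conjugated along each $\beta_i$-curve into $E(F)$ by the Whitehead relation, with the determinant-one labels $\operatorname{diag}(u,u^{-1})\in E(F)$. The bordism relating the two presentations is the trace of $1$-handle surgeries (Lemma~\ref{lem:imgS}), together with the unraveling Lemma~\ref{lem:urvl} on the disks, where the systems agree.

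I expect the real difficulty to be checking two things: that the framing extends over each handle trace, and that the direct-sum junctions introduced in the stabilization form a legitimate framed lasagna bordism. I would first try to verify this for a single torus with labels $u$ on $\alpha$ and $1$ on $\beta$, and then reduce the general case to connected sums of such tori.
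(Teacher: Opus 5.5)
Your curve representation is correct and matches the paper's argument: label curves representing the intersection-dual of an integral basis of $H_1(S;\bZ)$ by the values of $\rho$, and note that rank-one labels commute at crossings.

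The bordism part has a genuine gap. The relations available for changing a decoration without changing its class are direct-sum additivity $[S,M\oplus N]=[S,M]+[S,N]$, erasure of $0$-labeled vacuum, and actual bordisms. Factoring a label into commuting matrices, $\operatorname{diag}(u_i,1)=\operatorname{diag}(u_i,u_i^{-1})\operatorname{diag}(1,u_i)$, only redraws the same rank-two system with two parallel curves. It does not split off a summand $L'$, and the class is not multiplicative in the monodromy. For example, take the torus with product bounding framing over $F=\bR$. The rank-one systems with monodromies $(-1,1)$ and $(1,-1)$ extend over solid tori and so vanish. Their product $(-1,-1)$, however, represents $\{-1,-1\}\neq0$.

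The same objection applies to replacing $\operatorname{diag}(\rho,1)$ by $\operatorname{diag}(\rho\chi,\chi^{-1})$, which multiplies by the $E(F)$-valued system $\operatorname{diag}(\chi,\chi^{-1})$. Moreover, the result is still not $E(F)$-valued. Its determinant is $\rho\chi\cdot\chi^{-1}=\rho$, and conjugation preserves determinants. So no Whitehead conjugation along the $\beta_i$ can put it in $\mathrm{SL}(F)=E(F)$ unless $\rho$ is trivial. The leftover determinant, which you correctly identify as the obstacle, is therefore never removed. Also, $L\sim L\oplus\underline F$ is not a vacuum move, since the unit of $(\Vect_F,\oplus)$ is $0$. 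It requires $[S,\underline F]=0$, which is a condition on the framing of $S$.

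The missing idea is to use additivity together with Lemma~\ref{lem:ex-handlebody} to adjoin null-bordant rank-one summands whose determinants cancel that of $L$. The paper does this curve by curve. The system $L_i$ carried by the single curve $C_i$ extends over a handlebody in which $C_i$ is a meridian. Hence $[S,L]=[S,L\oplus\bigoplus_iL_i^{-1}]$, and the right side has determinant $\gamma\mapsto\rho(\gamma)\prod_iu_i^{-\langle\gamma,C_i\rangle}=1$.

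In your notation it suffices to write $\rho=\rho'\chi^{-1}$ and take $L\oplus L_{\rho'}^{-1}\oplus L_\chi$. The two added summands extend over the handlebodies with meridians $\{\beta_i\}$ and $\{\alpha_i\}$ respectively, and the total determinant is $\rho\,\rho'^{-1}\chi=1$. In either version the stable framing of $S$ must extend over the handlebodies used. This is the framing check you flagged, and the paper's use of the handlebody lemma needs it as well.
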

\begin{proof}
Choose an integral basis of $H_1(S;\bZ)$ and curves representing its
intersection-dual basis. The character $H_1(S;\bZ)\to F^\times$ is
specified by its values on that basis, which gives the formula. At a
crossing, the two transition functions commute, so the local cocycle
condition holds.

For a single oriented curve $C_i$ labeled by $u_i$, the corresponding
local system $L_i$ extends over a handlebody by
Lemma~\ref{lem:ex-handlebody}. Hence

\[
  L\sim L \oplus \bigoplus L_i \in \Loc_{E(F)}
\]
\end{proof}
Commutativity allows consistent crossings; it does not show that all
crossings can be removed through local-system homotopies. In particular,
it does not imply that every scalar-decorated surface bounds.

\begin{proposition}[Determinant normalization]\label{lem:EF}
For a local system $L$ of rank $r$ on $S$, the local system
\[
 L^0:=L\oplus(\det L)^{-1}
\]
has canonically trivial determinant and monodromy in
$\mathrm{SL}_{r+1}(F)\subset E(F)$. In the lasagna bordism group,
\begin{equation}\label{eq:ex-det-correction}
 [S,L^0]=[S,L]+[S,(\det L)^{-1}].
\end{equation}
\end{proposition}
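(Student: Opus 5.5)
The statement has two parts. The first is an algebraic fact about monodromy. The second is an additivity relation in $\Xi_2^{\Loc_F}$, which I would deduce from the direct-sum junction relation; the same mechanism gives the block-sum relation in \eqref{eq:ex-circle-relations}.

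\emph{Algebraic part.} Let $\rho:\pi_1(S)\to\GL_r(F)$ be the monodromy of $L$. Then $\det L$ is the rank-one local system with monodromy $\det\rho$, and $(\det L)^{-1}$ has monodromy $(\det\rho)^{-1}$. The monodromy of $L^0$ is the block matrix $\rho(\gamma)\oplus\det\rho(\gamma)^{-1}$, whose determinant is $1$. Hence it lies in $\mathrm{SL}_{r+1}(F)$.

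Canonicity comes from the isomorphism $\det(L\oplus M)\cong\det L\otimes\det M$ together with the evaluation pairing $\det L\otimes(\det L)^{-1}\cong\underline F$. Both are natural isomorphisms of local systems, so they involve no choices. Finally, for a field the elementary-matrix argument quoted in the proof of Proposition~\ref{prop:ex-K1} gives $\mathrm{SL}_{r+1}(F)\subset E(F)$: elementary row operations reduce any determinant-one matrix to the identity, once we stabilize to $\GL(F)$.

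\emph{Bordism relation.} I would construct an explicit framed $3$-dimensional lasagna bordism, namely a branched version of the product $S\times I$. Take $S\times[0,1]$ with its product stable framing. Insert a structural codimension-one junction along $S\times\{1/2\}$, so the bordism becomes a trinion-like branefold. Over $S\times[0,1/2]$ there is a single sheet decorated by $L^0$. Over $S\times[1/2,1]$ there are two sheets, decorated by $L$ and by $(\det L)^{-1}$. At the junction, the branched local system carries the tautological isomorphism $L^0\xrightarrow{\cong}L\oplus(\det L)^{-1}$, which is exactly the datum required by Proposition~\ref{prop:ex-local-systems}.

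The incoming boundary of this bordism is $(S,L^0)$. The outgoing boundary is the disjoint union $(S,L)\sqcup(S,(\det L)^{-1})$, since the two upper sheets are separate copies of $S$. Disjoint union is the group operation in $\Xi_2$, so the bordism yields \eqref{eq:ex-det-correction}. Alternatively, one can use the local brane $Y_1$ with $\partial_+Y_1$ of size two and cross it with $S$: the chart is $S\times Y_1$, and the compatibility conditions in Definition~\ref{def:lasagna} hold because they hold on $Y_1$ and everything is a product along $S$.

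\emph{Main obstacle.} The step I expect to need the most care is the framing. The junction bordism must carry a stable framing that restricts to the chosen framings on all three copies of $S$. In the product chart $S\times Y_1$, the reference tangent bundle is $TS\oplus TY_1$, and $TY_1$ is trivialized by the local model. I would take the product of the given framing of $S$ with the standard framing of $Y_1$. On each boundary copy of $S$ this restricts to the framing of $S$ stabilized by the collar direction, which is how the classes $[S,L]$ are defined.

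I would first check that no correction of the $\eta$ type from Lemma~\ref{lem:ex-framing} appears. I expect none, because the construction is a product along $S$ and involves no twisting.

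With the framing settled, the equation follows. Applying $\mathrm{PT}_2$ identifies it with additivity of $K_2$ classes under block sum, which serves as a consistency check.
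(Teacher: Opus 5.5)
Your proposal is correct and follows the paper's proof: the block monodromy has determinant $\det\rho(\gamma)\det\rho(\gamma)^{-1}=1$, the canonical trivialization is the evaluation $\det L\otimes(\det L)^{-1}\cong\underline F$, and \eqref{eq:ex-det-correction} is the direct-sum junction relation. Writing that last step out as the product bordism $S\times Y_1$ with the product stable framing, and justifying $\mathrm{SL}_{r+1}(F)\subset E(F)$ (which over a field already holds without stabilizing), just supplies details the paper leaves implicit.
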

\begin{proof}
For each loop $\gamma$, the determinant of the block monodromy is
$\det\rho(\gamma)\det\rho(\gamma)^{-1}=1$. The canonical trivialization
is the evaluation
$\det L\otimes(\det L)^{-1}\cong\underline F$. Equation
\eqref{eq:ex-det-correction} is the direct-sum relation.
\end{proof}
Since $\det L$ is also bordant to a local system in $\Loc_{E(F)}$, this
is the algebraic step in the proposed reduction to $E(F)$-local systems.
Thus $ \Xi_2^{\Loc_{E(F)}}\cong \Xi_2^{\Loc_{F}}$.

\paragraph{Punctured surfaces and the Steinberg group.}
Let $S$ have one parameterized boundary circle with a base point $b$.
Choose a basis in the fiber over $b$, and stabilize these bases compatibly.
The boundary holonomy is then an element of $\GL(F)$, rather than merely
a conjugacy class. For an oriented surface of genus $g$, a compatible
choice of generators gives
\[
 \rho(\partial S)=\prod_{i=1}^g[\rho(a_i),\rho(b_i)]\in E(F),
 \qquad [x,y]:=xyx^{-1}y^{-1}.
\]
The proposed relative group $\Xi_2^{\Loc_F,S^1,b}$ uses based
boundary-connected sum, with the parameterizations, basis, and collar
framing retained. Boundary holonomy is intended to give
\[
 \varphi:\Xi_2^{\Loc_F,S^1,b}\longrightarrow E(F).
\]
Write $\Xi_2^{\Loc_{E(F)},S^1,b}$ for the corresponding presentation
restricted to $E(F)$-local systems. By the preceding argument, every local
system is bordant to an $E(F)$-local system, giving
$\Xi_2^{\Loc_{E(F)},S^1,b}\cong \Xi_2^{\Loc_{F},S^1,b}$. The reduced model
is intended to encode the following standard algebraic group.

\begin{proposition}[The Hopf--Steinberg model]\label{prop:ex-Hopf}
Let $E(F)=P/R$, where $P$ is free and $R$ is the kernel. Then
\[
 \St(F)\cong\frac{[P,P]}{[R,P]},
 \qquad
 K_2(F)\cong\frac{R\cap[P,P]}{[R,P]}.
\]
The first quotient is the universal central extension of $E(F)$.
\end{proposition}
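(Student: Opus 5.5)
The plan is to combine Hopf's formula for the universal central extension with two standard algebraic inputs: $E(F)$ is perfect, and $\St(F)\to E(F)$ is its universal central extension with kernel $K_2(F)$ (Kervaire, Steinberg, Milnor; see \cite[III.5]{WeibelKbook}).

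\textbf{Step 1: $[P,P]/[R,P]$ is a central extension.} Since $R$ is normal in $P$, the subgroup $[R,P]$ is normal in $P$, and $R/[R,P]$ is central in $P/[R,P]$. Consider the map
\[
 [P,P]/[R,P]\longrightarrow E(F).
\]
Its image is $[P,P]R/R=[E(F),E(F)]$, which is all of $E(F)$ because $E(F)$ is perfect; perfectness follows from the Steinberg relation $e_{ik}(a)=[e_{ij}(a),e_{jk}(1)]$ for distinct $i,j,k$, available in the stable group. Its kernel is $(R\cap[P,P])/[R,P]$, which is central. So we have a central extension of $E(F)$.

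\textbf{Step 2: universality.} First show $G:=[P,P]/[R,P]$ is perfect. Any $[x,y]$ with $x,y\in P$ can be rewritten modulo $R$ using perfectness of $E(F)$: write $x=x'r$ and $y=y's$ with $x',y'\in[P,P]$ and $r,s\in R$. Since $r,s$ are central modulo $[R,P]$, we get $[x,y]\equiv[x',y']$ modulo $[R,P]$, so $G=[G,G]$.

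Next, given a central extension $1\to A\to H\xr{\pi} E(F)\to1$, use freeness of $P$ to lift $P\to E(F)$ to $\phi:P\to H$. Then $\phi(R)\subset A$ is central, so $\phi([R,P])=1$ and $\phi$ descends to $G\to H$ over $E(F)$.

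Uniqueness of this map follows from perfectness of $G$. If $\phi,\phi'$ are two maps over $E(F)$, then $g\mapsto\phi(g)\phi'(g)^{-1}$ is a homomorphism to the abelian group $A$, because $A$ is central. It therefore kills $[G,G]=G$, so $\phi=\phi'$. Thus $G$ is the universal central extension of $E(F)$.

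\textbf{Step 3: identification.} The universal central extension of a perfect group is unique up to unique isomorphism. Since $\St(F)\to E(F)$ is universal, we get $\St(F)\cong[P,P]/[R,P]$ over $E(F)$. Comparing kernels gives
\[
 K_2(F)=\Ker(\St(F)\to E(F))\cong(R\cap[P,P])/[R,P].
\]

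The main obstacle is the external input that $\St(F)$ is the universal central extension, i.e.\ that $\St(F)$ is perfect and every central extension of it splits. I would cite this rather than reprove it, noting that it uses stabilization with at least five indices so that the Steinberg relations can absorb central terms. The remaining steps are Hopf's formula and formal manipulations.
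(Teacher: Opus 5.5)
Your proposal is correct and follows essentially the same route as the paper. Both arguments show that $\overline R$ is central in $P/[R,P]$, use perfectness of $E(F)$ to get $P=R[P,P]$ and hence perfectness of $[P,P]/[R,P]$, lift central extensions from the free group $P$ (which kills $[R,P]$), obtain uniqueness from perfectness via a homomorphism into the abelian central kernel, and then invoke the cited universal property of $\St(F)$. The paper packages the perfectness step as $V=\overline R\,V'$, whereas you verify $[x'r,y's]\equiv[x',y']$ directly.
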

\begin{proof}
In $V=P/[R,P]$, the image $\overline R$ is central. Since $E(F)$ is
perfect, $P=R[P,P]$, so $V=\overline R\,V'$. Hence
$V'=[V',V']$ is perfect. It maps onto $E(F)$ with central kernel
$(R\cap[P,P])/[R,P]$. Every central extension of $E(F)$ admits a lift
from the free group $P$; this lift kills $[R,P]$. Its restriction to the
perfect group $V'$ is unique, since two lifts differ by a homomorphism
to an abelian central kernel. Thus $V'$ is the universal central
extension. The stable Steinberg extension has this universal property
\cite[III.5.3--5.5]{WeibelKbook}.
\end{proof}

\begin{proposition}
We have
\[
 \Xi_2^{\Loc_{F},S^1,b}\cong\Xi_2^{\Loc_{E(F)},S^1,b}\cong\St(F),
\]
and boundary holonomy is the canonical map $\varphi:\St(F)\to E(F)$.
\end{proposition}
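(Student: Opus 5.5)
The first isomorphism is the determinant normalization already recorded: by Proposition~\ref{lem:EF} and the preceding scalar-curve proposition, every based decorated surface is bordant, relative to its parameterized boundary collar, to one with $E(F)$-monodromy. So the work is to identify $\Xi_2^{\Loc_{E(F)},S^1,b}$ with $\St(F)$ through the Hopf--Steinberg model of Proposition~\ref{prop:ex-Hopf}. I would construct a homomorphism
\[
 \Phi:\St(F)\cong[P,P]/[R,P]\longrightarrow\Xi_2^{\Loc_{E(F)},S^1,b}
\]
with $\varphi\circ\Phi$ the canonical projection, and then prove that $\Phi$ is bijective.

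\textbf{Step 1: the map $\Phi$.} Take $P$ free on $E(F)$ itself. An element of $[P,P]$ is a product $\prod_i[x_i,y_i]$ of commutators of words. Send it to the genus-$g$ surface with one based boundary circle whose holonomies are $\rho(a_i)=\bar x_i$ and $\rho(b_i)=\bar y_i$. Its boundary holonomy is $\prod_i[\bar x_i,\bar y_i]$. Group structure is given by based boundary-connected sum.

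To see that $\Phi$ is well defined, I would show three things:
\begin{itemize}
 \item The class depends only on the element of $[P,P]$, not on the commutator expression. Two expressions of the same element of the free group $[P,P]$ are related by the standard Hopf moves, and these are realized by mapping-class changes of the surface relative to the boundary.
 \item Elements of $[R,P]$ are killed. A commutator $[r,p]$ with $\bar r=1$ gives a handle whose $a$-curve has trivial monodromy. Lemma~\ref{lem:urvl} and the Image Candy surgery (Lemma~\ref{lem:imgS}) compress that handle, relative to the boundary, to the trivial disk.
 \item The framing of the compressing trace extends. Here I would use the bounding framing on the compressing curves, by the normalization of Lemma~\ref{lem:ex-framing}.
\end{itemize}

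\textbf{Step 2: surjectivity.} Take a based surface with $E(F)$-local system, or more generally a branefold with junctions. Removing junctions uses $\Xi_1$ relations along circles and the merging moves, reducing to an ordinary surface. A connected surface with one boundary circle is classified by its genus and holonomies, and by construction it is $\Phi$ of its commutator word. Disconnected closed components have to be handled separately. Their classes lie in $\Xi_2^{\Loc_F}=K_2(F)$. I would argue that such a closed component can be tubed onto the boundary component, using Lemma~\ref{lem:imgS} with a disk of trivial monodromy. Its contribution is then a closed-surface word in $(R\cap[P,P])/[R,P]$, which lies in the image of $\Phi$.

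\textbf{Step 3: injectivity.} This is the main obstacle. A relative $3$-dimensional lasagna bordism between two based surfaces must be shown to change the Steinberg word only by $[R,P]$. I would argue via the relative Pontryagin--Thom theorem (Theorem~\ref{thm:gPT} with the relative representation property), as in the proof of Proposition~\ref{prop:ex-K1}. That theorem identifies the based relative group with $\pi_2$ of the homotopy fiber of $B E(F)^+\to$ (target of the boundary map), and this $\pi_2$ computes $H_2$-type data. The group $\Xi_2^{\Loc_{E(F)},S^1,b}$ then sits in a central extension
\[
 K_2(F)\to\Xi_2^{\Loc_{E(F)},S^1,b}\xrightarrow{\varphi}E(F),
\]
whose kernel is $\Xi_2^{\Loc_F}\cong K_2(F)$ by the previous step. $\Phi$ induces a map of central extensions of $E(F)$, namely the identity on $E(F)$ and an isomorphism on kernels. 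Both kernels are $K_2(F)$, and compatibility follows from the Hopf formula. The five lemma then gives that $\Phi$ is an isomorphism.

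The remaining delicate point is that the kernel identification must be compatible with $\Phi$. This I would verify on closed surfaces, where $\Phi$ agrees with the Hopf-formula description of $H_2(E(F))\cong K_2(F)$ by surfaces mapping to $BE(F)$.
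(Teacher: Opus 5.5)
Your outline is the paper's own. You reduce to $E(F)$ by Proposition~\ref{lem:EF}, compare with the Hopf model of Proposition~\ref{prop:ex-Hopf} through commutator surfaces, kill $[R,P]$ by Image Candy Surgery (Lemma~\ref{lem:imgS}), and obtain injectivity from Theorem~\ref{thm:gPT}. Your short-five-lemma comparison of $(R\cap[P,P])/[R,P]\to[P,P]/[R,P]\to E(F)$ with $\ker\varphi\to\Xi_2^{\Loc_{E(F)},S^1,b}\to E(F)$ is a sharper form of the paper's one-line injectivity claim. Two steps fail as written, however.

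\textbf{Well-definedness of $\Phi$.} Independence from the commutator expression cannot come from mapping-class changes. These preserve genus, but expressions of one element of $P$ can have different lengths. For example, $[x,y][y,x]=1$ in $P$, so the genus-two surface with holonomies $(\bar x,\bar y,\bar y,\bar x)$ must be identified with the disk, and no diffeomorphism does that. You need genus-changing relative $3$-bordisms. Glue the two surfaces along their common based boundary; the resulting closed surface has monodromy factoring through the free group $P$. Compress it along curves of trivial monodromy (for instance, preimages of edge midpoints of a graph $K(P,1)$) and cap the remaining pieces. This gives a $3$-manifold over which the local system extends, in the spirit of Lemma~\ref{lem:ex-handlebody}. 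In the example, $T\natural(-T)$, with $T$ the punctured torus with holonomies $(\bar x,\bar y)$, is relatively nullbordant through $T\times I$. You must still check that your normalized framing extends over these fillings.

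\textbf{The kernel step of the five lemma.} Knowing that both kernels are abstractly $K_2(F)$ proves nothing. You need the specific composite
\[
 H_2(E(F))\cong\frac{R\cap[P,P]}{[R,P]}\xrightarrow{\ \Phi\ }\ker\varphi
 \cong\Xi_2^{\Loc_F}\xrightarrow{\ \mathrm{PT}\ }K_2(F)
\]
to be the standard isomorphism. The Hopf-formula part is classical, but Theorem~\ref{thm:gPT} is a Yoneda statement and does not compute $\mathrm{PT}$ on any particular decorated surface. The claim that a closed $E(F)$-surface with the product bounding framing goes to the image of its fundamental class is the ``usual normalized Pontryagin--Thom comparison.'' The paper assumes this in the commuting-tori paragraph. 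You must adopt it as a hypothesis rather than verify it, and the framing convention matters, since reframing can shift classes by $\eta$-type terms (Lemma~\ref{lem:ex-framing}).

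Three further points affect how the pieces fit together:
\begin{itemize}
\item The identification $\ker\varphi\cong\Xi_2^{\Loc_F}$ comes from capping with a disk whose framing extends the collar framing, not from your Step~2.
\item Determinant normalization only makes $\Xi_2^{\Loc_{E(F)},S^1,b}\to\Xi_2^{\Loc_F,S^1,b}$ surjective.
\item Theorem~\ref{thm:gPT} computes the closed group for $\Loc_F$, not for $E(F)$-local systems.
\end{itemize}
The clean order is therefore to run the five lemma for $\Xi_2^{\Loc_F,S^1,b}$, which gives bijectivity of $\Phi$ there. The first isomorphism is then equivalent to surjectivity of $\Phi$ onto the $E(F)$-group, which is your Step~2. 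In that step, the classification of a based surface ``by genus and holonomies'' must also account for its framing relative to the boundary.
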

\begin{proof}
By the preceding argument, it suffices to consider $E(F)$-local systems.
We must show that the decorated-surface presentation has the following
properties:
\begin{enumerate}
 \item cutting to a polygon and reading its strings gives a word in
 $[P,P]$, compatibly with boundary-connected sum, and every such word is
 realized; 
 \item its allowed relative bordisms identify exactly the words whose
 quotient lies in $[R,P]$.
\end{enumerate}
These two properties identify the geometric presentation with
$[P,P]/[R,P]$. Apply Proposition~\ref{prop:ex-Hopf}.

For the first property, move the crossings into the boundary collar; the
remaining crossing-free strings correspond to $ [P,P]$. For the second,
the relations in $[R,P]$ are realized by Image Candy Surgery. This gives
the surjection
$ \Xi_2^{\Loc_{E(F)},S^1,b} \twoheadrightarrow \St(F)$.
Injectivity follows from the generalized Pontryagin--Thom theorem
(Theorem~\ref{thm:gPT}).
\end{proof}

Forgetting the boundary base point gives conjugation in the word model.
At the level of orbit \emph{sets}, conjugation by $\St(F)$ and by $E(F)$
has the same orbits, since $K_2(F)$ is central. Therefore
$\Xi_2^{\Loc_{F},S^1}\cong\Xi_2^{\Loc_{E(F)},S^1}$. Closing a boundary
with holonomy $1$ also requires a chosen boundary trivialization and a
framing that extends over the capping disk. This gives
$ \Xi_2^{\Loc_{F}}\cong \Xi_2^{\Loc_{E(F)},S^1}\cong \Ker \varphi $.

\paragraph{Commuting tori and Steinberg symbols.}
For commuting $f,g\in E(F)$, choose lifts $\widetilde f,\widetilde g$ in
$\St(F)$ and define
\begin{equation}\label{eq:ex-commuting-torus}
 C(f,g):=[\widetilde f,\widetilde g]\in K_2(F).
\end{equation}
Centrality of the kernel makes this independent of the lifts. It is the
image of the oriented torus class in $H_2(E(F);\bZ)$ under
$H_2(E(F);\bZ)\cong K_2(F)$ \cite[III.5]{WeibelKbook}. Geometrically, use
the torus with holonomies $f,g$, product bounding circle framings, and the usual normalized Pontryagin--Thom comparison.

For arbitrary commuting $f,g\in\GL(F)$, there is again a decorated torus.
By the preceding reduction, it has an $E(F)$-valued representative of the
same bordism class.

\begin{center}

\tikzset{every picture/.style={line width=0.75pt}} 

\begin{adjustbox}{max totalsize={\linewidth}{0.86\textheight}}
\begin{tikzpicture}[x=0.75pt,y=0.75pt,yscale=-1,xscale=1]

\draw  [draw opacity=0] (199.81,157.95) .. controls (193.54,166.8) and (184.07,172.43) .. (173.48,172.43) .. controls (163.27,172.43) and (154.1,167.19) .. (147.84,158.89) -- (173.48,132.46) -- cycle ; \draw   (199.81,157.95) .. controls (193.54,166.8) and (184.07,172.43) .. (173.48,172.43) .. controls (163.27,172.43) and (154.1,167.19) .. (147.84,158.89) ;  
\draw  [draw opacity=0] (156.87,167.22) .. controls (161.5,163.95) and (167.09,162.05) .. (173.11,162.05) .. controls (179.63,162.05) and (185.64,164.29) .. (190.48,168.06) -- (173.11,192.09) -- cycle ; \draw   (156.87,167.22) .. controls (161.5,163.95) and (167.09,162.05) .. (173.11,162.05) .. controls (179.63,162.05) and (185.64,164.29) .. (190.48,168.06) ;  
\draw   (93.42,166.15) .. controls (93.42,140.39) and (128.39,119.52) .. (171.52,119.52) .. controls (214.66,119.52) and (249.62,140.39) .. (249.62,166.15) .. controls (249.62,191.9) and (214.66,212.78) .. (171.52,212.78) .. controls (128.39,212.78) and (93.42,191.9) .. (93.42,166.15) -- cycle ;
\draw  [color={rgb, 255:red, 184; green, 233; blue, 134 }  ,draw opacity=1 ] (114.43,161.76) .. controls (114.43,147.23) and (140.45,135.46) .. (172.55,135.46) .. controls (204.65,135.46) and (230.66,147.23) .. (230.66,161.76) .. controls (230.66,176.28) and (204.65,188.06) .. (172.55,188.06) .. controls (140.45,188.06) and (114.43,176.28) .. (114.43,161.76) -- cycle ;
\draw  [draw opacity=0] (152.3,209.95) .. controls (146.23,208.2) and (141.61,199.29) .. (141.61,188.55) .. controls (141.61,176.56) and (147.37,166.84) .. (154.47,166.84) .. controls (155.29,166.84) and (156.1,166.97) .. (156.87,167.22) -- (154.47,188.55) -- cycle ; \draw  [color={rgb, 255:red, 208; green, 2; blue, 27 }  ,draw opacity=1 ] (152.3,209.95) .. controls (146.23,208.2) and (141.61,199.29) .. (141.61,188.55) .. controls (141.61,176.56) and (147.37,166.84) .. (154.47,166.84) .. controls (155.29,166.84) and (156.1,166.97) .. (156.87,167.22) ;  
\draw  [draw opacity=0][dash pattern={on 4.5pt off 4.5pt}] (163.47,173.03) .. controls (165.86,176.97) and (167.34,182.47) .. (167.34,188.55) .. controls (167.34,199.71) and (162.35,208.9) .. (155.93,210.12) -- (154.47,188.55) -- cycle ; \draw  [color={rgb, 255:red, 208; green, 2; blue, 27 }  ,draw opacity=1 ][dash pattern={on 4.5pt off 4.5pt}] (163.47,173.03) .. controls (165.86,176.97) and (167.34,182.47) .. (167.34,188.55) .. controls (167.34,199.71) and (162.35,208.9) .. (155.93,210.12) ;  
\draw    (155.57,193.65) .. controls (194.23,221.39) and (241.59,210.71) .. (283.69,183.6) ;
\draw [shift={(284.97,182.78)}, rotate = 146.84] [color={rgb, 255:red, 0; green, 0; blue, 0 }  ][line width=0.75]    (10.93,-3.29) .. controls (6.95,-1.4) and (3.31,-0.3) .. (0,0) .. controls (3.31,0.3) and (6.95,1.4) .. (10.93,3.29)   ;
\draw    (295.44,149.7) -- (296.29,175.27) ;
\draw [shift={(296.36,177.26)}, rotate = 268.09] [color={rgb, 255:red, 0; green, 0; blue, 0 }  ][line width=0.75]    (4.37,-1.96) .. controls (2.78,-0.92) and (1.32,-0.27) .. (0,0) .. controls (1.32,0.27) and (2.78,0.92) .. (4.37,1.96)   ;
\draw    (350.57,149.7) -- (350.57,175.26) ;
\draw [shift={(350.57,177.26)}, rotate = 270] [color={rgb, 255:red, 0; green, 0; blue, 0 }  ][line width=0.75]    (4.37,-1.96) .. controls (2.78,-0.92) and (1.32,-0.27) .. (0,0) .. controls (1.32,0.27) and (2.78,0.92) .. (4.37,1.96)   ;
\draw    (309.22,191.05) -- (334.79,191.05) ;
\draw [shift={(336.79,191.05)}, rotate = 180] [color={rgb, 255:red, 0; green, 0; blue, 0 }  ][line width=0.75]    (4.37,-1.96) .. controls (2.78,-0.92) and (1.32,-0.27) .. (0,0) .. controls (1.32,0.27) and (2.78,0.92) .. (4.37,1.96)   ;
\draw    (309.22,135.92) -- (334.79,135.92) ;
\draw [shift={(336.79,135.92)}, rotate = 180] [color={rgb, 255:red, 0; green, 0; blue, 0 }  ][line width=0.75]    (4.37,-1.96) .. controls (2.78,-0.92) and (1.32,-0.27) .. (0,0) .. controls (1.32,0.27) and (2.78,0.92) .. (4.37,1.96)   ;
\draw    (330.1,156.66) -- (316.32,172.28)(327.85,154.68) -- (314.07,170.3) ;
\draw  [dash pattern={on 0.84pt off 2.51pt}] (128.83,183.93) .. controls (128.83,175.85) and (135.38,169.3) .. (143.45,169.3) .. controls (151.53,169.3) and (158.08,175.85) .. (158.08,183.93) .. controls (158.08,192) and (151.53,198.55) .. (143.45,198.55) .. controls (135.38,198.55) and (128.83,192) .. (128.83,183.93) -- cycle ;

\draw (103.39,149.45) node [anchor=north west][inner sep=0.75pt]  [font=\normalsize]  {$f$};
\draw (134.32,205.89) node [anchor=north west][inner sep=0.75pt]  [font=\normalsize]  {$g$};
\draw (316.67,117.11) node [anchor=north west][inner sep=0.75pt]  [font=\small]  {$f$};
\draw (315.51,193.37) node [anchor=north west][inner sep=0.75pt]  [font=\small]  {$f$};
\draw (290.42,131.65) node [anchor=north west][inner sep=0.75pt]  [font=\small]  {$V$};
\draw (342.18,130.43) node [anchor=north west][inner sep=0.75pt]  [font=\small]  {$V$};
\draw (291.34,184.95) node [anchor=north west][inner sep=0.75pt]  [font=\small]  {$V$};
\draw (344.33,186.78) node [anchor=north west][inner sep=0.75pt]  [font=\small]  {$V$};
\draw (280.38,154.69) node [anchor=north west][inner sep=0.75pt]  [font=\small]  {$g$};
\draw (357.25,155.91) node [anchor=north west][inner sep=0.75pt]  [font=\small]  {$g$};
\draw (37,172.4) node [anchor=north west][inner sep=0.75pt]    {$C( f,g)$};

\end{tikzpicture}
\end{adjustbox}

\end{center}

For units $u,v\in F^\times$, retain the notation $C(u,v)$ for the standard
Steinberg symbol, realized by the two commuting determinant-one matrices
\begin{equation}\label{eq:ex-symbol}
\begin{aligned}
 A(u)&=\operatorname{diag}(u,u^{-1},1),&
 B(v)&=\operatorname{diag}(v,1,v^{-1}),\\
 C(u,v)&:=C(A(u),B(v))=\{u,v\}.
\end{aligned}
\end{equation}
Matsumoto's theorem says that these symbols generate $K_2(F)$, subject to
bilinearity and
\[
 C(u,1-u)=0\qquad(u\neq0,1)
\]
\cite[III.5.10 and III.6.1]{WeibelKbook}. Together with the assumed
Pontryagin--Thom comparison, this gives an existence proof that every
closed $2$-lasagna class has a representative which is a disjoint union
of $E(F)$-decorated tori, with orientations reversed for negative
coefficients. 

We seek explicit $3$-lasagnas realizing these relations. For $i\neq j$,
put $e_{ij}(a)=I+aE_{ij}$. With the commutator convention above,
\[
[e_{ij}(a),e_{kl}(b)]=
\begin{cases}
 I,&i\neq l,\ j\neq k,\\
 e_{il}(ab),&j=k,\ i\neq l,\\
 e_{kj}(-ab),&i=l,\ j\neq k,\\
 \begin{pmatrix}
 1+ab+a^2b^2&-a^2b\\
 ab^2&1-ab
 \end{pmatrix}_{ij},&i=l,\ j=k.
\end{cases}
\]
The subscript in the last line means that this is the $(i,j)$-block and
all other coordinates are unchanged. The first three identities lift to
the Steinberg generators $x_{ij}(a)$, together with
$x_{ij}(a)x_{ij}(b)=x_{ij}(a+b)$.

A commuting pair is \emph{overcommuting} if its lifts commute in every
central extension. Equivalently, its torus class in $H_2(G;\bZ)$ vanishes,
or it bounds a compact oriented $3$-manifold with an extending
$G$-local system \cite[Proposition 2.1]{overcommuting}. This equivalence is
an oriented-bordism statement; a prescribed stable boundary framing must
be checked separately.

\begin{lemma}[Handlebody extension criterion]\label{lem:ex-handlebody}
Let $S=\partial H$, where $H$ is a handlebody with a specified stable
framing, and equip $S$ with the induced framing. A $G$-local system on
$S$ extends over $H$ if and only if its monodromy is trivial on the
meridians of a complete compressing-disk system.
\end{lemma}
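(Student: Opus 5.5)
We reduce to the homotopy-theoretic description of local systems and a standard handle decomposition of $H$. A $G$-local system on a connected space $X$ with a base point is a map $X\to BG$ with $BG=K(G,1)$, or equivalently a homomorphism $\pi_1(X)\to G$ up to conjugation. The extension question is therefore whether a given $\rho:\pi_1(S)\to G$ factors, up to the choice of base point in $S\subset H$, through $i_*:\pi_1(S)\to\pi_1(H)$. Because $BG$ is aspherical, once the map is defined on the $2$-skeleton of a relative cell structure it extends uniquely up to homotopy. The framing hypothesis plays no role in the existence question. It only guarantees that $H$, with the induced framing on $S$, is a framed bordism, so that an extension really gives a framed lasagna null-bordism. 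We would note this at the start and then ignore the framing.

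For the proof itself, let $D_1,\dots,D_g$ be a complete system of compressing disks with meridians $m_i=\partial D_i$. Cutting $H$ along the $D_i$ leaves a $3$-ball. Hence $H$ is obtained from $S\times I$ by attaching $g$ $2$-handles along the curves $m_i\times\{1\}$ and then a single $3$-handle. Dually, $H$ deformation retracts to $S\cup_{m_i}\bigcup_i D_i$ with a $3$-cell attached. By van Kampen, $\pi_1(H)\cong\pi_1(S)/\langle\!\langle m_1,\dots,m_g\rangle\!\rangle$, and this is a free group of rank $g$, consistent with $H$ being a handlebody.

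For necessity, if $\rho$ extends over $H$, then each $m_i$ bounds the disk $D_i\subset H$, so $\rho(m_i)=1$. The same holds after conjugation, so the condition does not depend on the choice of base-point paths. For sufficiency, suppose $\rho(m_i)=1$ for all $i$. Then $\rho$ kills the normal closure of the meridians and therefore descends to $\pi_1(H)$. Geometrically, the map $S\to BG$ extends over each $2$-handle because its attaching circle is null-homotopic in $BG$. It then extends over the $3$-handle because $\pi_2(BG)=0$. As an alternative, one could invoke Lemma~\ref{lem:urvl} to extend disk by disk: the boundary local system on each $\partial D_i$ has trivial monodromy, so an extension exists and is unique up to contractible choice. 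The remaining ball is handled by the same lemma applied one dimension up.

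The main obstacle is base points. The curves $m_i$ are free loops, and monodromy is defined only up to conjugacy. But "trivial" is a conjugation-invariant condition, so we would state the criterion as $\rho(m_i)$ being conjugate to the identity, which is the same as being equal to it. The other point to check is that the extension is compatible with the given identification of the local system on the collar of $S$. This holds because the relative extension space is $\Omega^2BG$- or $\Omega^3BG$-shaped at each stage and is therefore contractible.
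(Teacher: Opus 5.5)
Your proposal is correct and follows the paper's argument. Both proofs identify $\pi_1(H)$ with $\pi_1(S)$ modulo the normal closure of the meridians, then use the asphericity of $BG$ to realize the factored representation as an extension. Your handle decomposition, the base-point and conjugacy remark, and the observation that the framing plays no role in existence make explicit what the paper's short proof leaves implicit.
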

\begin{proof}
The map $\pi_1(S)\to\pi_1(H)$ is surjective, with kernel normally
generated by those meridians. Thus the representation factors through
$\pi_1(H)$ exactly under the stated condition. A map to the aspherical
space $BG$ realizing that representation gives the extension.
\end{proof}
A crossing-free string picture gives such a filling when its curves are
matched to the chosen disk system and its framing is the induced one.

\begin{proposition}[An explicit OCT filling]\label{prop:ex-OCT}
For every $a,b\in F$, the torus with holonomies
$e_{12}(a),e_{12}(b)$ bounds a local-system-decorated $3$-manifold. One
may take
\[
 W=\Sigma_{1,1}\times S^1,
\]
where $\Sigma_{1,1}$ is a once-punctured torus. Compatible product stable
framings make this a framed filling of the induced boundary torus.
\end{proposition}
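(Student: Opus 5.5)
The plan is to build the filling explicitly as a product $W=\Sigma_{1,1}\times S^1$ with a local system pulled back from a representation of $\pi_1(W)\cong F_2\times\bZ$. The $S^1$ factor will carry the holonomy $e_{12}(a)$. The free factor $F_2=\langle x,y\rangle$ of the punctured torus will carry elements whose commutator is $e_{12}(b)$ and which both commute with $e_{12}(a)$. The commutator table above suggests the choice
\[
 x\mapsto e_{13}(b),\qquad y\mapsto e_{32}(1),\qquad [e_{13}(b),e_{32}(1)]=e_{12}(b),
\]
using the case $j=k$, $i\neq l$ of the table. For commutation, the first case of the same table gives $[e_{13}(b),e_{12}(a)]=I$ (there $i=1\neq l=2$ and $j=3\neq k=1$) and $[e_{32}(1),e_{12}(a)]=I$ (there $3\neq2$ and $2\neq1$). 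So the assignment defines a homomorphism $F_2\times\bZ\to E_3(F)\subset E(F)$. Since $BG$ is aspherical, as in Lemma~\ref{lem:ex-handlebody}, it is realized by a local system on $W$, and no further extension problem arises.

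Next I would identify the boundary. We have $\partial W=\partial\Sigma_{1,1}\times S^1$, a torus whose two generating circles carry holonomies $\rho(\partial\Sigma_{1,1})=[x,y]\mapsto e_{12}(b)$ and $e_{12}(a)$, matching the given torus. If the orientation or ordering convention for $C(f,g)$ produces $e_{12}(b)^{-1}$ or swaps the factors, I would fix it by replacing $b$ with $-b$, using $e_{12}(-b)=e_{12}(b)^{-1}$, or by reversing the orientation of $S^1$. The decorated boundary is an ordinary local system on a torus, so Lemma~\ref{lem:urvl} is not needed beyond the identification of collars.

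The framing is the step I expect to be the main obstacle, and the theorem statement explicitly warns that oriented bordism does not suffice. On the $S^1$ factor I will take the bounding framing $\tau_0$. On $\Sigma_{1,1}$, which is parallelizable, I will choose a framing whose restrictions to the core curves $a$ and $b$ are the bounding framings; equivalently, it extends over the genus-one handlebody, so the associated quadratic form has Arf invariant $0$. I then need the induced stable framing on $\partial\Sigma_{1,1}$ to be the bounding one $\tau_0$ and not $\tau_1$.

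My first attempt will use the Pontryagin--Thom reading: a framed $\Sigma_{1,1}$ is a framed null-bordism of its boundary circle. The class $\eta\in\pi_1^s$ carried by the boundary circle is governed by $\eta^2$ and the Arf invariant, so the Arf-zero choice should give $\tau_0$ on the boundary. As a fallback, I will check directly that capping $\Sigma_{1,1}$ with a disk yields the framed torus of Arf invariant $0$, which bounds the solid torus. Then the product stable framing on $W$ restricts to $\tau_0\times\tau_0$ on $\partial W$, which is the product bounding framing used in defining $C(f,g)$. So $W$ is a framed lasagna filling, and $C(e_{12}(a),e_{12}(b))=0$.
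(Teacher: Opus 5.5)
Your proposal is correct and follows the paper's proof. It uses the same $W=\Sigma_{1,1}\times S^1$ and the same representation built from $e_{13}$, $e_{32}(1)$ and $e_{12}$ (with the roles of $a$ and $b$ interchanged, which is harmless), together with the same product of a stable framing on $\Sigma_{1,1}$ and $\tau_0$ on $S^1$. The Arf-invariant condition you impose is unnecessary, and your claim that it governs the boundary class is not right: your own observation that $\Sigma_{1,1}$ is a framed null-bordism of its boundary circle already forces the induced boundary framing to be $\tau_0$ for \emph{every} stable framing of $\Sigma_{1,1}$ (since $\tau_1$ represents $\eta\neq0$), whereas the Arf invariant only distinguishes the capped-off closed tori.
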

\begin{proof}
Write
$\pi_1(W)=\langle x,y,z\mid[x,z]=[y,z]=1\rangle$, with boundary
coordinates $[x,y],z$. Define
\[
 \rho(x)=e_{13}(a),\qquad
 \rho(y)=e_{32}(1),\qquad
 \rho(z)=e_{12}(b).
\]
The first two matrices commute with $\rho(z)$, and
$[e_{13}(a),e_{32}(1)]=e_{12}(a)$, so this is a representation with the
required boundary holonomies. Choose a stable framing of $\Sigma_{1,1}$
inducing the bounding framing on its boundary and the bounding stable
framing on $S^1$; their product gives the claimed boundary framing.
\end{proof}
This supplies an explicit overcommuting-translation (OCT) block for the
stable elementary group. The Image Candy Surgery presentation below is
another proposed way to display the same elementary relation. We orient
cooriented strings so that crossing them in the positive normal direction
applies their displayed labels; reversing that direction inverts them.

\begin{center}

\tikzset{every picture/.style={line width=0.75pt}} 

\begin{adjustbox}{max totalsize={\linewidth}{0.86\textheight}}
\begin{tikzpicture}[x=0.75pt,y=0.75pt,yscale=-1,xscale=1]

\draw  [fill={rgb, 255:red, 248; green, 231; blue, 28 }  ,fill opacity=1 ] (99.8,160.7) .. controls (99.8,134.47) and (121.07,113.2) .. (147.3,113.2) .. controls (173.53,113.2) and (194.8,134.47) .. (194.8,160.7) .. controls (194.8,186.93) and (173.53,208.2) .. (147.3,208.2) .. controls (121.07,208.2) and (99.8,186.93) .. (99.8,160.7) -- cycle ;
\draw    (138.3,138) -- (138.3,178) ;
\draw    (158.3,138) -- (158.3,178) ;

\draw  [fill={rgb, 255:red, 248; green, 231; blue, 28 }  ,fill opacity=1 ] (110.8,55.63) .. controls (110.8,33.6) and (128.65,15.75) .. (150.67,15.75) .. controls (172.7,15.75) and (190.55,33.6) .. (190.55,55.63) .. controls (190.55,77.65) and (172.7,95.5) .. (150.67,95.5) .. controls (128.65,95.5) and (110.8,77.65) .. (110.8,55.63) -- cycle ;
\draw    (140.3,36) -- (140.3,76) ;
\draw    (160.3,36) -- (160.3,76) ;

\draw [color={rgb, 255:red, 208; green, 2; blue, 27 }  ,draw opacity=1 ]   (148.8,58.2) .. controls (148.2,95.2) and (176.8,86.2) .. (176.4,58.2) .. controls (176,30.48) and (151.69,11.38) .. (148.88,56.8) ;
\draw [shift={(148.8,58.2)}, rotate = 272.91] [color={rgb, 255:red, 208; green, 2; blue, 27 }  ,draw opacity=1 ][line width=0.75]    (10.93,-3.29) .. controls (6.95,-1.4) and (3.31,-0.3) .. (0,0) .. controls (3.31,0.3) and (6.95,1.4) .. (10.93,3.29)   ;
\draw [color={rgb, 255:red, 65; green, 117; blue, 5 }  ,draw opacity=1 ]   (140.3,56) .. controls (123.1,55.6) and (126.4,87.8) .. (152.8,88.2) .. controls (178.8,88.59) and (199.76,58.33) .. (162.06,56.08) ;
\draw [shift={(160.3,56)}, rotate = 2] [color={rgb, 255:red, 65; green, 117; blue, 5 }  ,draw opacity=1 ][line width=0.75]    (10.93,-3.29) .. controls (6.95,-1.4) and (3.31,-0.3) .. (0,0) .. controls (3.31,0.3) and (6.95,1.4) .. (10.93,3.29)   ;
\draw [color={rgb, 255:red, 208; green, 2; blue, 27 }  ,draw opacity=1 ]   (147.3,160.7) .. controls (146.7,197.7) and (175.3,188.7) .. (174.9,160.7) .. controls (174.5,132.98) and (150.19,113.88) .. (147.38,159.3) ;
\draw [shift={(147.3,160.7)}, rotate = 272.91] [color={rgb, 255:red, 208; green, 2; blue, 27 }  ,draw opacity=1 ][line width=0.75]    (10.93,-3.29) .. controls (6.95,-1.4) and (3.31,-0.3) .. (0,0) .. controls (3.31,0.3) and (6.95,1.4) .. (10.93,3.29)   ;
\draw [color={rgb, 255:red, 74; green, 144; blue, 226 }  ,draw opacity=1 ]   (138.8,158.5) .. controls (121.6,158.1) and (122.1,197.35) .. (148.5,197.75) .. controls (174.5,198.14) and (198.18,161.04) .. (160.56,158.59) ;
\draw [shift={(158.8,158.5)}, rotate = 2] [color={rgb, 255:red, 74; green, 144; blue, 226 }  ,draw opacity=1 ][line width=0.75]    (10.93,-3.29) .. controls (6.95,-1.4) and (3.31,-0.3) .. (0,0) .. controls (3.31,0.3) and (6.95,1.4) .. (10.93,3.29)   ;
\draw [color={rgb, 255:red, 126; green, 211; blue, 33 }  ,draw opacity=1 ]   (139.3,151) .. controls (100.5,149.75) and (119.5,204.25) .. (150.5,205.25) .. controls (181.19,206.24) and (205.51,154.3) .. (160.68,151.08) ;
\draw [shift={(159.3,151)}, rotate = 2.76] [color={rgb, 255:red, 126; green, 211; blue, 33 }  ,draw opacity=1 ][line width=0.75]    (10.93,-3.29) .. controls (6.95,-1.4) and (3.31,-0.3) .. (0,0) .. controls (3.31,0.3) and (6.95,1.4) .. (10.93,3.29)   ;
\draw [color={rgb, 255:red, 126; green, 211; blue, 33 }  ,draw opacity=1 ]   (158.5,169.25) .. controls (175.5,172.75) and (158,194.25) .. (147.5,192.25) .. controls (137.37,190.32) and (129.1,185.13) .. (136.62,170.39) ;
\draw [shift={(137.5,168.75)}, rotate = 119.36] [color={rgb, 255:red, 126; green, 211; blue, 33 }  ,draw opacity=1 ][line width=0.75]    (10.93,-4.9) .. controls (6.95,-2.3) and (3.31,-0.67) .. (0,0) .. controls (3.31,0.67) and (6.95,2.3) .. (10.93,4.9)   ;
\draw [color={rgb, 255:red, 74; green, 144; blue, 226 }  ,draw opacity=1 ]   (159.5,141.75) .. controls (182.5,138.25) and (180.5,118.75) .. (149,121.75) .. controls (117.97,124.71) and (99.56,149) .. (135.33,143.28) ;
\draw [shift={(137,143)}, rotate = 169.93] [color={rgb, 255:red, 74; green, 144; blue, 226 }  ,draw opacity=1 ][line width=0.75]    (10.93,-3.29) .. controls (6.95,-1.4) and (3.31,-0.3) .. (0,0) .. controls (3.31,0.3) and (6.95,1.4) .. (10.93,3.29)   ;
\draw    (122.5,90.25) -- (122.95,109.75) ;
\draw [shift={(123,111.75)}, rotate = 268.67] [color={rgb, 255:red, 0; green, 0; blue, 0 }  ][line width=0.75]    (7.65,-3.43) .. controls (4.86,-1.61) and (2.31,-0.47) .. (0,0) .. controls (2.31,0.47) and (4.86,1.61) .. (7.65,3.43)   ;
\draw  [fill={rgb, 255:red, 248; green, 231; blue, 28 }  ,fill opacity=1 ] (406,183.08) .. controls (367.74,205.17) and (318.81,192.06) .. (296.72,153.8) .. controls (274.63,115.54) and (287.74,66.61) .. (326,44.52) .. controls (364.26,22.43) and (413.19,35.54) .. (435.28,73.8) .. controls (457.37,112.06) and (444.26,160.99) .. (406,183.08) -- cycle ;
\draw    (319.04,152.46) -- (353.68,132.46) ;
\draw    (309.04,135.14) -- (343.68,115.14) ;

\draw    (422.96,135.14) -- (388.32,115.14) ;
\draw    (412.96,152.46) -- (378.32,132.46) ;

\draw    (356,53.8) -- (356,93.8) ;
\draw    (376,53.8) -- (376,93.8) ;

\draw [color={rgb, 255:red, 74; green, 144; blue, 226 }  ,draw opacity=1 ]   (376.14,59.84) .. controls (397.26,62.8) and (405.77,95.12) .. (393.63,116.05) ;
\draw [shift={(392.65,117.64)}, rotate = 303.03] [color={rgb, 255:red, 74; green, 144; blue, 226 }  ,draw opacity=1 ][line width=0.75]    (10.93,-3.29) .. controls (6.95,-1.4) and (3.31,-0.3) .. (0,0) .. controls (3.31,0.3) and (6.95,1.4) .. (10.93,3.29)   ;
\draw [color={rgb, 255:red, 74; green, 144; blue, 226 }  ,draw opacity=1 ]   (382.5,136.03) .. controls (370.58,149.47) and (379.25,118.45) .. (383.84,101.7) .. controls (388.15,85.96) and (385.34,86.09) .. (377.59,80.36) ;
\draw [shift={(376.05,79.19)}, rotate = 38.13] [color={rgb, 255:red, 74; green, 144; blue, 226 }  ,draw opacity=1 ][line width=0.75]    (10.93,-3.29) .. controls (6.95,-1.4) and (3.31,-0.3) .. (0,0) .. controls (3.31,0.3) and (6.95,1.4) .. (10.93,3.29)   ;
\draw [color={rgb, 255:red, 74; green, 144; blue, 226 }  ,draw opacity=1 ]   (355.64,77.34) .. controls (346.94,78.29) and (323.98,90.12) .. (311.86,106.03) .. controls (299.74,121.94) and (301.82,134.24) .. (306.13,139.51) .. controls (310.44,144.78) and (344.28,115.81) .. (352.82,125.82) .. controls (361.36,135.83) and (368.28,145.8) .. (380.36,149) .. controls (391.59,151.98) and (392.12,150.86) .. (398.4,145.52) ;
\draw [shift={(399.92,144.24)}, rotate = 140.45] [color={rgb, 255:red, 74; green, 144; blue, 226 }  ,draw opacity=1 ][line width=0.75]    (10.93,-3.29) .. controls (6.95,-1.4) and (3.31,-0.3) .. (0,0) .. controls (3.31,0.3) and (6.95,1.4) .. (10.93,3.29)   ;
\draw [color={rgb, 255:red, 74; green, 144; blue, 226 }  ,draw opacity=1 ]   (410.07,127.47) .. controls (416.58,113.53) and (417.72,81.16) .. (403.12,61.1) .. controls (388.53,41.04) and (368.08,41.8) .. (351.2,45.44) .. controls (335.07,48.92) and (340.38,54.67) .. (353.5,58.44) ;
\draw [shift={(355.4,58.95)}, rotate = 194.48] [color={rgb, 255:red, 74; green, 144; blue, 226 }  ,draw opacity=1 ][line width=0.75]    (10.93,-3.29) .. controls (6.95,-1.4) and (3.31,-0.3) .. (0,0) .. controls (3.31,0.3) and (6.95,1.4) .. (10.93,3.29)   ;
\draw [color={rgb, 255:red, 126; green, 211; blue, 33 }  ,draw opacity=1 ]   (315,130.8) .. controls (305.27,109.15) and (332.39,95.12) .. (353.58,88.87) ;
\draw [shift={(355.2,88.4)}, rotate = 164.34] [color={rgb, 255:red, 126; green, 211; blue, 33 }  ,draw opacity=1 ][line width=0.75]    (10.93,-3.29) .. controls (6.95,-1.4) and (3.31,-0.3) .. (0,0) .. controls (3.31,0.3) and (6.95,1.4) .. (10.93,3.29)   ;
\draw [color={rgb, 255:red, 126; green, 211; blue, 33 }  ,draw opacity=1 ]   (376.2,88) .. controls (393.8,91.6) and (362.6,99.6) .. (345.8,104) .. controls (330.01,108.14) and (331.53,110.51) .. (330.45,120.08) ;
\draw [shift={(330.2,122)}, rotate = 278.13] [color={rgb, 255:red, 126; green, 211; blue, 33 }  ,draw opacity=1 ][line width=0.75]    (10.93,-3.29) .. controls (6.95,-1.4) and (3.31,-0.3) .. (0,0) .. controls (3.31,0.3) and (6.95,1.4) .. (10.93,3.29)   ;
\draw [color={rgb, 255:red, 126; green, 211; blue, 33 }  ,draw opacity=1 ]   (338.8,140.6) .. controls (343.56,147.09) and (364.45,159.81) .. (383.01,162.87) .. controls (401.58,165.93) and (414.73,159.45) .. (417.4,152.39) .. controls (420.07,145.34) and (380.6,131.2) .. (385,118.8) .. controls (389.4,106.4) and (394.67,92.46) .. (391.4,80.4) .. controls (388.51,69.73) and (383.93,69.9) .. (378.96,69.44) ;
\draw [shift={(377,69.2)}, rotate = 9.25] [color={rgb, 255:red, 126; green, 211; blue, 33 }  ,draw opacity=1 ][line width=0.75]    (10.93,-3.29) .. controls (6.95,-1.4) and (3.31,-0.3) .. (0,0) .. controls (3.31,0.3) and (6.95,1.4) .. (10.93,3.29)   ;
\draw [color={rgb, 255:red, 126; green, 211; blue, 33 }  ,draw opacity=1 ]   (355,68.4) .. controls (339.68,69.73) and (311.07,84.93) .. (301,107.6) .. controls (290.93,130.27) and (301.8,147.6) .. (313.4,160.4) .. controls (324.48,172.63) and (326.8,165.15) .. (323.5,151.91) ;
\draw [shift={(323,150)}, rotate = 74.48] [color={rgb, 255:red, 126; green, 211; blue, 33 }  ,draw opacity=1 ][line width=0.75]    (10.93,-3.29) .. controls (6.95,-1.4) and (3.31,-0.3) .. (0,0) .. controls (3.31,0.3) and (6.95,1.4) .. (10.93,3.29)   ;
\draw [color={rgb, 255:red, 208; green, 2; blue, 27 }  ,draw opacity=1 ]   (407.32,149.76) .. controls (396.3,164.23) and (366.44,151.37) .. (350.22,136.32) ;
\draw [shift={(349,135.16)}, rotate = 44.34] [color={rgb, 255:red, 208; green, 2; blue, 27 }  ,draw opacity=1 ][line width=0.75]    (10.93,-3.29) .. controls (6.95,-1.4) and (3.31,-0.3) .. (0,0) .. controls (3.31,0.3) and (6.95,1.4) .. (10.93,3.29)   ;
\draw [color={rgb, 255:red, 208; green, 2; blue, 27 }  ,draw opacity=1 ]   (338.16,117.17) .. controls (332.47,100.13) and (355,123.15) .. (367.21,135.5) .. controls (378.69,147.11) and (379.98,144.6) .. (388.81,140.76) ;
\draw [shift={(390.6,140.01)}, rotate = 158.13] [color={rgb, 255:red, 208; green, 2; blue, 27 }  ,draw opacity=1 ][line width=0.75]    (10.93,-3.29) .. controls (6.95,-1.4) and (3.31,-0.3) .. (0,0) .. controls (3.31,0.3) and (6.95,1.4) .. (10.93,3.29)   ;
\draw [color={rgb, 255:red, 208; green, 2; blue, 27 }  ,draw opacity=1 ]   (402.41,123.26) .. controls (404.48,118.56) and (412.34,96.06) .. (405.01,78.57) .. controls (397.69,61.09) and (383.72,47.61) .. (373.32,49.3) .. controls (362.93,50.99) and (374.77,94.19) .. (361.83,96.58) .. controls (348.89,98.97) and (334.46,100.37) .. (326.71,108.54) .. controls (319.42,116.22) and (320.25,116.77) .. (321.99,126.23) ;
\draw [shift={(322.33,128.16)}, rotate = 260.32] [color={rgb, 255:red, 208; green, 2; blue, 27 }  ,draw opacity=1 ][line width=0.75]    (10.93,-3.29) .. controls (6.95,-1.4) and (3.31,-0.3) .. (0,0) .. controls (3.31,0.3) and (6.95,1.4) .. (10.93,3.29)   ;
\draw [color={rgb, 255:red, 208; green, 2; blue, 27 }  ,draw opacity=1 ]   (331.78,145.33) .. controls (340.6,157.94) and (368.06,175.11) .. (392.73,172.5) .. controls (417.4,169.89) and (426.97,151.81) .. (432.26,135.36) .. controls (437.3,119.66) and (429.67,121.38) .. (419.85,130.86) ;
\draw [shift={(418.45,132.25)}, rotate = 314.48] [color={rgb, 255:red, 208; green, 2; blue, 27 }  ,draw opacity=1 ][line width=0.75]    (10.93,-3.29) .. controls (6.95,-1.4) and (3.31,-0.3) .. (0,0) .. controls (3.31,0.3) and (6.95,1.4) .. (10.93,3.29)   ;
\draw    (202.7,159.5) -- (271.43,147.1) ;
\draw [shift={(273.4,146.75)}, rotate = 169.78] [color={rgb, 255:red, 0; green, 0; blue, 0 }  ][line width=0.75]    (10.93,-3.29) .. controls (6.95,-1.4) and (3.31,-0.3) .. (0,0) .. controls (3.31,0.3) and (6.95,1.4) .. (10.93,3.29)   ;
\draw  [dash pattern={on 0.84pt off 2.51pt}]  (170,138) .. controls (194.4,128.25) and (195.9,164.75) .. (175.4,163.75) ;
\draw  [dash pattern={on 0.84pt off 2.51pt}]  (175,154) .. controls (157.4,150.75) and (147.9,173.25) .. (161.4,185.75) ;

\draw (181,16.4) node [anchor=north west][inner sep=0.75pt]    {$=C( e_{12}( a) ,e_{12}( b))$};
\draw (193.3,118.7) node [anchor=north west][inner sep=0.75pt]   [align=left] {Image Candy};

\end{tikzpicture}
\end{adjustbox}

\end{center}

\paragraph{Ghys-type fillings and the factor of two.}
Let
\[
 h(u)=\operatorname{diag}(u,u^{-1}).
\]
The diagonal commutator formula gives
\begin{equation}\label{eq:ex-Ghys-double}
 C(h(u),h(v))=2\{u,v\}=2C(u,v).
\end{equation}
Equivalently, when the multiplicative geometric comparison of the next
subsection is used, the two diagonal summands contribute
$C(u,v)+C(u^{-1},v^{-1})=2C(u,v)$.
For $v=1-u$, this class vanishes. A filling of this diagonal torus will
be called a \emph{Ghys lasagna} $G(u)$. It fills the doubled symbol;
there is no division by $2$ in this argument.

Liechti--March\'e construct oriented Ghys manifolds and relate their
Heegaard complexity to overcommutation length \cite{overcommuting}.
Their unstable $\mathrm{SL}_2$ constructions have the field hypotheses
stated there.  The stable
$K_2$-vanishing above is valid for every field and every $u\neq0,1$.

Following the proposed surgery calculation, put
\[
 x(a)=e_{12}(a),\qquad y(a)=e_{21}(a),\qquad
 w(u)=x(u)y(-u^{-1})x(u),\qquad h(u)=w(u)w(-1).
\]
Matrix multiplication verifies the displayed formula for $h(u)$.
Expanding these words suggests resolving the diagonal crossings by
Image Candy Surgery and elementary OCT blocks.

\begin{center}

\tikzset{every picture/.style={line width=0.75pt}} 

\begin{adjustbox}{max totalsize={\linewidth}{0.86\textheight}}

\end{adjustbox}

\end{center}

The proposed picture is intended to use four OCT resolutions and produce

More generally, a finite derivation of a Steinberg-group identity can
suggest a sequence of local replacements. Translating it into a
geometric nullbordism requires a framed block for each relation and
compatible gluings of their entire boundaries.

Finally, fix a representative of the single symbol $C(u,1-u)$, for
example the torus from \eqref{eq:ex-symbol}. Its vanishing and the
assumed Pontryagin--Thom theorem give a $3$-lasagna $SR(u)$ with this
boundary. An explicit construction of $SR(u)$, rather than only a
filling of $2C(u,1-u)$, remains a separate geometric goal. One can obtain
such a construction by writing the proof in terms of Steinberg-group
relations and resolving the corresponding crossings. This produces a
large crossing-free surface, as in the method of
\cite[Section~5.4]{overcommuting}.

Two choices of $SR(u)$ differ, after gluing along their identified boundaries, by a closed $3$-lasagna. This ambiguity is important for the next subsection.

\begin{center}

\tikzset{every picture/.style={line width=0.75pt}} 

\begin{adjustbox}{max totalsize={\linewidth}{0.86\textheight}}

\end{adjustbox}

\end{center}

\subsection{\texorpdfstring{Tiramisu, $K_3(F)$, the Bloch Group, and Higher Degrees}{Tiramisu, K3(F), the Bloch Group, and Higher Degrees}}\label{subsec:k3}

\paragraph{Products and Milnor $K$-theory.}
For decorated closed branefolds, define
\[
 (M,L_1)\boxtimes(N,L_2)
 :=\bigl(M\times N,L_1\boxtimes L_2\bigr),
 \qquad
 L_1\boxtimes L_2
 =\operatorname{pr}_1^*L_1\otimes_F\operatorname{pr}_2^*L_2.
\]
Use the product stable framing and the product local diagrams. Tensor
product distributes over direct sum, so the branch identifications
remain compatible. Products of bordisms show that this descends to
\[
 \boxtimes:\Xi_p^{\Loc_F}\otimes\Xi_q^{\Loc_F}
 \longrightarrow\Xi_{p+q}^{\Loc_F}.
\]

\begin{proposition}[Milnor symbols as decorated tori]
With this multiplicative comparison, the formula
\[
 T:K_d^M(F)\longrightarrow\Xi_d^{\Loc_F},
 \qquad
 \{a_1,\ldots,a_d\}
 \longmapsto
 L_\infty(a_1)\boxtimes\cdots\boxtimes L_\infty(a_d)
\]
is well defined and corresponds to the standard product map
$K_d^M(F)\to K_d(F)$.
\end{proposition}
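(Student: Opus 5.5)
The plan is to reduce everything to a statement about ring spectra and then transport it through the Pontryagin--Thom comparison of Theorem~\ref{thm:gPT}. First I would verify that $\boxtimes$ is a well-defined, bilinear, graded-commutative product on $\Xi_*^{\Loc_F}$. Bilinearity comes from the direct-sum junction relation together with the distributivity $(L\oplus L')\boxtimes L''\cong (L\boxtimes L'')\oplus(L'\boxtimes L'')$, applied strata-wise. Well-definedness comes from products of bordisms with the product stable framing. Graded commutativity comes from the swap diffeomorphism $M\times N\cong N\times M$; its stable framing twist contributes the sign $(-1)^{pq}$.

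Next, and this is the step I expect to be the main obstacle, I would show that $\mathrm{PT}_*$ carries $\boxtimes$ to the product on $\pi_*K(F)$ induced by the $E_\infty$-ring structure on $K(F)$ coming from $\otimes_F$. I would try to upgrade Theorem~\ref{thm:lch} multiplicatively. The tensor product $\otimes_F:\Vect_F^{\cong}\times\Vect_F^{\cong}\to\Vect_F^{\cong}$ is bilinear for $\oplus$. The universal property, applied in each variable, should therefore give a pairing of lasagna categories $\Lasa^{\cong,\otimes}\times\Lasa^{\cong,\otimes}\to\Lasa^{\cong,\otimes}$ that is realized geometrically by the external product $\boxtimes$. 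After groupoid completion this pairing would correspond to the multiplication $K(F)\wedge K(F)\to K(F)$. The delicate point is to check that the geometric product of branefolds, whose corners now come from products of local branes, is a legitimate lasagna, and that the induced map agrees with the algebraic pairing rather than with some twist of it. If this cannot be made fully coherent, I would fall back on a weaker comparison on homotopy groups: compare on circles first, and then use naturality of the pairing in each factor.

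With multiplicativity in hand, the degree-one case is Proposition~\ref{prop:ex-K1}. There $L_\infty(a)$ is sent by $\mathrm{PT}_1$ to the class of $a$ in $K_1(F)\cong F^\times$. Hence $\mathrm{PT}_d\circ T$ sends $\{a_1,\ldots,a_d\}$ to the product $a_1\cdots a_d$ in $K_d(F)$, which is the standard map $K_*^M(F)\to K_*(F)$. It then remains to check that $T$ is well defined, meaning it respects the Milnor relations. Multilinearity follows from bilinearity of $\boxtimes$ and from $L_\infty(ab)=L_\infty(a)+L_\infty(b)$ in \eqref{eq:ex-circle-relations}. The Steinberg relation only needs to be checked on a pair of adjacent factors. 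For that pair, $L_\infty(u)\boxtimes L_\infty(1-u)$ is the torus with commuting scalar holonomies $u$ and $1-u$. By the determinant normalization and \eqref{eq:ex-symbol}, this torus is bordant to $C(u,1-u)$, which vanishes by Matsumoto's theorem and the $K_2$ comparison; the filling $SR(u)$ gives an explicit null-bordism. Crossing $SR(u)$ with the remaining circles yields the null-bordism in degree $d$.

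Alternatively, and more cleanly, once $\mathrm{PT}_*$ is known to be a ring isomorphism, well-definedness follows immediately. The map $T$ factors as $\mathrm{PT}^{-1}$ composed with the standard ring homomorphism $K_*^M(F)\to K_*(F)$, since both send generators $\{a\}$ to the same class $L_\infty(a)$.
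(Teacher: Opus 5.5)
Your verification of well-definedness is essentially the paper's proof. Multilinearity comes from the circle relation $L_\infty(ab)=L_\infty(a)+L_\infty(b)$ together with bilinearity of $\boxtimes$. The Steinberg relation is killed by crossing $SR(u)$ with the remaining circles, using a connecting bordism between torus representatives of $C(u,1-u)$. Note that $SR(u)$ is not explicit: it exists only through the degree-two Pontryagin--Thom comparison.

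You diverge in treating multiplicativity of $\mathrm{PT}$ as your main obstacle. In the statement, ``with this multiplicative comparison'' is a hypothesis. The paper does not derive it; it only checks the Milnor relations and identifies $L_\infty(a_1)\boxtimes\cdots\boxtimes L_\infty(a_d)$ with the product-framed $\mathbb T^d$ carrying a rank-one local system. You should simply invoke that hypothesis, because your sketch would not establish it. Theorem~\ref{thm:lch} is an equivalence of \emph{spaces} of functors, so applying it ``in each variable'' does not by itself produce a coherent bilinear pairing, let alone identify one with $\boxtimes$. Your fallback also fails: agreement on circles does not determine the values on products.

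Granting the hypothesis, your alternative route is cleaner. Writing $T$ as $\mathrm{PT}^{-1}$ composed with the standard map $K^M_*(F)\to K_*(F)$ makes $SR(u)$ unnecessary. The cost is that $\mathrm{PT}_d$ must be injective and multiplicative in degree $d$, so $d<n$. For well-definedness, the paper's route needs the comparison only in degree two, and then only products of bordisms.

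One step needs correcting. Determinant normalization does not identify $L_\infty(u)\boxtimes L_\infty(1-u)$ with $C(u,1-u)$. For rank-one $L$, the system $L\oplus(\det L)^{-1}$ has holonomies $h(u),h(1-u)$, which is the doubled class $2C(u,1-u)$ of \eqref{eq:ex-Ghys-double}; the argument is circular. Instead, split the torus of \eqref{eq:ex-symbol} by the direct-sum relation. It becomes three rank-one tori with holonomy pairs $(u,v)$, $(u^{-1},1)$ and $(1,v^{-1})$. The last two are $L_\infty(u^{-1})\boxtimes L_\infty(1)$ and $L_\infty(1)\boxtimes L_\infty(v^{-1})$, which vanish because $L_\infty(1)=0$ for the bounding framing. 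Alternatively, as the paper does, let the comparison supply the connecting bordism.
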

\begin{proof}
The circle relations make the formula multilinear in the multiplicative
groups of units. The degree-two Steinberg relation is represented by a
nullbordism $SR(u)$; multiplying that bordism by the remaining factors
kills the corresponding relation in every degree. If a different torus
representative of $C(u,1-u)$ is used, choose the connecting bordism
supplied by the same comparison. These are exactly the defining Milnor
relations. The product of the circles is the product-framed torus
$\mathbb T^d$ with a rank-one local system whose monodromies are
$a_1,\ldots,a_d$.
\end{proof}

In particular, scalar local systems on surfaces need not be nullbordant.
Over $F=\mathbb R$, the product-framed torus with monodromies $-1,-1$
represents $\{-1,-1\}\neq0$; the real Hilbert symbol detects this class
\cite[III.6.2.1]{WeibelKbook}. This explains why the scalar correction
in \eqref{eq:ex-det-correction} cannot simply be discarded.

Define the indecomposable quotient by
\[
 K_d^{\mathrm{ind}}(F)
 :=\operatorname{coker}\bigl(K_d^M(F)\longrightarrow K_d(F)\bigr).
\]
The canonical map from Milnor $K$-theory is not assumed injective, so this
notation does not regard $K_d^M(F)$ as a specified subgroup.

\paragraph{The Bloch group and choices of Steinberg fillings.}
For the remainder of this subsection, assume that $F$ is infinite. Define
$PB(F)$ to be the quotient of $\bZ[F\setminus\{0,1\}]$ by the five-term
relations
\[
 [x]-[y]+\left[\frac yx\right]
 -\left[\frac{1-x^{-1}}{1-y^{-1}}\right]
 +\left[\frac{1-x}{1-y}\right]=0,
 \qquad x,y\neq0,1,\quad x\neq y.
\]
For an integral convention retaining the relevant $2$-torsion, set
\[
 A(F)=
 \frac{F^\times\otimes_{\bZ}F^\times}
 {\langle a\otimes b+b\otimes a\rangle},
 \qquad
 \delta:PB(F)\longrightarrow A(F),\quad
 [u]\longmapsto u\otimes(1-u),
\]
and put $B(F)=\ker\delta$. The five-term expressions map to zero in
$A(F)$; replacing $A(F)$ by an exterior square without adjusting
conventions can lose $2$-torsion \cite[\S3]{MirzaiiMokari}.

Suslin's Bloch--Wigner exact sequence gives
\begin{equation}\label{eq:ex-Suslin}
 0\longrightarrow
 \widetilde{\operatorname{Tor}}{}_1^{\bZ}
       (\mu(F),\mu(F))
 \longrightarrow K_3^{\mathrm{ind}}(F)
 \xrightarrow{\rho}B(F)\longrightarrow0,
\end{equation}
where $\mu(F)$ is the group of roots of unity and the tilde denotes the
standard modified torsion term, with its usual characteristic-dependent
convention \cite{SuslinBloch,MirzaiiMokari}. This specifies that term
independently of $\ker\rho$.

It is natural to look for geometric representatives of the torsion term
using local systems on lens spaces, but no such identification is proved
here. For the Bloch group, the proposed interpretation is that $[u]$
records a chosen witness $SR(u)$ of the Steinberg relation.
Since $SR(u)$ has boundary, $\rho(SR(u))$ is not defined by
\eqref{eq:ex-Suslin}: one needs a relative or chain-level refinement of
$\rho$ with the formal rule
\[
 SR(u)\longleftrightarrow[u].
\]

For a combination $\sum_i n_i[u_i]\in\ker\delta$, the intended geometric
construction glues the signed fillings $n_iSR(u_i)$ using specified
bordisms implementing the tensor, symmetry, and bilinearity relations
among their boundary diagrams. The equality $\sum_i n_i\delta[u_i]=0$
is algebraic; the actual collar identifications and those gluing bordisms
are additional choices. Different choices can change the resulting
closed $3$-lasagna by a $K_3(F)$-class.

\begin{conjecture}[Coherent five-term fillings]\label{conj:ex-five-term}
The representatives $SR(u)$ and their boundary-gluing data can be chosen
coherently, up to $K^M_3(F)$ and
$\widetilde{\operatorname{Tor}}{}_1^{\bZ} (\mu(F),\mu(F))$, so that,
for $x,y\in F\setminus\{0,1\}$ with $x\neq y$, the
closed $3$-lasagna obtained from
\[
\begin{aligned}
 SR(x)\sqcup -SR(y)\sqcup SR\!\left(\frac yx\right)
 \sqcup -SR\!\left(\frac{1-x^{-1}}{1-y^{-1}}\right)
 \sqcup SR\!\left(\frac{1-x}{1-y}\right)
\end{aligned}
\]
by the specified local tensor and bilinearity moves bounds a framed
$4$-lasagna. There is an analogous conjecture for the doubled-symbol
fillings $G(u)$, with their compatible boundary conventions.
\end{conjecture}
The conjecture concerns one coherent system of local choices, not every
possible filling or closure. Even if a closed comparison has zero image
in $B(F)$, \eqref{eq:ex-Suslin} leaves a torsion ambiguity in
$K_3^{\mathrm{ind}}(F)$; decomposable $K_3$-classes give an additional
ambiguity before passing to that quotient.

\section{Further Directions}\label{sec:further}

The constructions above suggest several questions beyond the examples
considered here. The following directions concern both the geometry of
lasagnas and the coefficient structures they can carry.

\paragraph{Three-dimensional visualization and animation.}
A useful next step is to produce three-dimensional models of the surgery
blocks and animations of their boundary diagrams. Such models should show
not only the underlying surfaces but also their framings, local-system
labels, and attaching maps. This would make the OCT and Ghys-type
constructions easier to inspect and help verify the proposed handle counts.
An interactive presentation could also record the sequence of local moves
associated with a group-theoretic derivation.

\paragraph{Beyond the framed case.}
The framework should be compared with bordisms carrying other tangential
structures. The proposed starting point is a category $\cC$ with an
$O$-action $\cC\to BO$. One would need to specify how the coefficient
structure interacts with changes of tangential data, and which framing
relations survive after passing to oriented, spin, or other structured
bordisms. The ordinary cobordism hypothesis with tangential structures
provides the guiding comparison \cite{Lurie}.

\paragraph{Waldhausen categories and the $S_\bullet$-construction.}
For a Waldhausen category, the coefficient data should include
cofibrations, weak equivalences, and cofiber sequences. A geometric model
should represent the simplices of the $S_\bullet$-construction and their
face and degeneracy operations, rather than encode direct sum alone.
This would extend the present group-completion picture toward the
additivity relations of Waldhausen $K$-theory \cite{Waldhausen}.

\paragraph{Other coefficient categories.}
It would be useful to work out further examples, including the algebraic
$K$-theory of spaces and categories arising from coherent sheaves.
Waldhausen's construction provides a starting point for the former
\cite{Waldhausen}. For the latter, the geometric decorations should retain
the relevant exact or derived structure. Comparing these examples with
vector-space decorations may clarify which features are consequences of
additive group completion and which require additional coefficient data.

\paragraph{Trace maps to $\mathrm{THH}$ and $\mathrm{TC}$.}
The trace should admit a description in terms of cyclic diagrams of
coefficient morphisms. One would like to send a lasagna to a coherent family
of such diagrams and identify the resulting map with the Dennis trace.
For topological cyclic homology, the model must additionally retain the
circle action and its cyclotomic structure; identifying the underlying
cyclic diagrams alone would not suffice. The geometric construction of the
cyclotomic trace and the modern formulation of $\mathrm{TC}$ offer useful
comparisons \cite{AMGRTrace,NikolausScholze}.

\paragraph{Quiver representations.}
Quiver representations give concrete coefficient categories in which
objects, morphisms, and relations can be displayed by diagrams. It would be
interesting to compare these diagrams with lasagna junctions and determine
how changes of representation or relations in the quiver appear as local
geometric moves. Small examples may also provide tests of the dependence
on the chosen categorical presentation.

\paragraph{Sheaf-valued and motivic versions.}
A further goal is a relative construction for sheaves of higher categories,
with a corresponding geometric or motivic cobordism hypothesis. Such a
construction would need a specified base, topology, and descent theorem,
as well as compatibility of the local defect structures with base change.
The locality methods of geometric field theory provide a model for this
problem \cite{GPGeometric,GPLocality}; a motivic formulation would require
additional geometric input rather than a formal change of terminology.

\bibliographystyle{alpha}
\bibliography{references}
\end{document}